\numberwithin{equation}{section}
\g@addto@macro\bfseries{\boldmath}
\newcommand{\dist}{\operatorname{dist}}
\newcommand{\cat}{\operatorname{cat}}
\newcommand{\Rcal}{\mathcal R}
\newcommand{\Ncal}{\mathcal N}
\newcommand{\Dcal}{\mathcal D}
\newcommand{\Dgot}{\mathfrak D}
\newcommand{\Ddt}{\tfrac{\mathrm D}{\mathrm dt}}
\newcommand{\Dds}{\tfrac{\mathrm D}{\mathrm ds}}
\theoremstyle{plain}\newtheorem{teo}{Theorem}[section]
\theoremstyle{plain}\newtheorem{prop}[teo]{Proposition}
\theoremstyle{plain}\newtheorem{lem}[teo]{Lemma}
\theoremstyle{plain}\newtheorem{cor}[teo]{Corollary}
\theoremstyle{definition}\newtheorem{defin}[teo]{Definition}
\theoremstyle{remark}\newtheorem{rem}[teo]{Remark}
\theoremstyle{definition}
\theoremstyle{remark}
\theoremstyle{plain}
\newtheorem*{mainthm}{\sc Theorem (Seifert's Conjecture on Brake Orbits)}
\title[Multiple OGCs and the Seifert's conjecture]{Multiple orthogonal geodesic chords\\ and a proof of Seifert's conjecture on brake orbits}
\author[R.\ Giamb\`o]{Roberto Giamb\`o$^{1,2}$}
\address{$^1$Scuola di Scienze e Tecnologie, Universit\`a di Camerino, Camerino (MC), Italy}
\address{$^2$INFN, Sezione di Perugia, Perugia, Italy}
\email{roberto.giambo@unicam.it}
\author[F.\ Giannoni]{Fabio Giannoni$^1$}
\email{fabio.giannoni@unicam.it}
\author[P. Piccione]{Paolo Piccione$^3$}
\address{$^3$Departamento de Matem\'atica , Universidade de S\~ao Paulo, S\~ao Paulo (SP), Brazil}
\email{piccione@ime.usp.br}
\date{June 3, 2022}
\subjclass[2010]{37J45, 58E10, 58E35}
\begin{document}

\begin{abstract}
Using a pseudo-gradient approach and minimax theory, we prove the existence of at least $N$ orthogonal geodesic chords in a class of Riemannian $N$-disk with strongly concave boundary. This yields a proof of a celebrated conjecture by Seifert  \cite{seifert} on the number of brake orbits in a potential well of a natural Lagrangian/Hamiltonian system.
\end{abstract}

\maketitle

\renewcommand{\contentsline}[4]{\csname nuova#1\endcsname{#2}{#3}{#4}}
\newcommand{\nuovasection}[3]{\hbox to \hsize{\vbox{\advance\hsize by -1cm\baselineskip=10pt\parfillskip=0pt\leftskip=3.5cm\noindent\hskip -2.5cm \textsl{#1}\leaders\hbox{.}\hfil\hfil\par}$\,$#2\hfil}}
\newcommand{\nuovasubsection}[3]{\hbox to \hsize{\vbox{\advance\hsize by -1cm\baselineskip=10pt\parfillskip=0pt\leftskip=4cm\noindent\hskip -2cm #1\leaders\hbox{.}\hfil\hfil\par}$\,$#2\hfil}}

%\vspace{-.5cm}

\begin{small}
\tableofcontents
\end{small}

%%%%%%%%%
%%%%%%%%%

\section{Introduction and statement of the results}\label{sec:intro}
\subsection*{Orthogonal geodesic chords}
Let $(M,g)$ be a  Riemannian manifold with $\mathrm{dim}(M)=N\ge2$ and let $\Omega\subset M$ be an open subset with smooth boundary $\partial\Omega$; $\overline\Omega=\Omega\bigcup\partial \Omega$ will denote its closure. The main objects of interest here are \emph{orthogonal geodesic chords in $\overline\Omega$}, \textbf{OGC}s for short, i.e. noncostant  geodesics $\gamma:[a,b] \to \overline\Omega$ that start and arrive orthogonally to $\partial\Omega$ and such that $\gamma\big(\left]a,b\right[\big) \subset \Omega$. Our aim is to determine a lower bound on the number of OGCs when $\overline\Omega$ is homemorphic to an $N$-disk and to use it to prove a conjecture due to H. Seifert (cf. \cite{seifert}).% \smallskip

The case when $\overline\Omega$ is convex is studied in a classical paper by Bos, see \cite{bos}. Bos' result says that, when $\overline\Omega$ is homeomorphic to an $N$-disk and convex, then there are at least $N$ distinct OGCs in $\overline\Omega$. Such a result is a generalization of a classical result by Lusternik and Schnirelman (see \cite{LustSchn}), where the same estimate was proven for convex subsets of $\mathds R^N$ endowed with the Euclidean metric.  
Recently, Bos' result has been extended to cases satisfying assumptions weaker than convexity, see \cite{OT}.

Note that convexity is an essential assumption for the use of curve shortening method. Namely, in this situation, geodesics in $\overline\Omega$ can touch $\partial\Omega$ only at their endpoints (or lie entirely on $\partial \Omega$), and shortening a curve in $\overline\Omega$ by broken geodesics produces a curve that remains inside $\overline\Omega$. 

When studying the non-convex case, classical variational approaches fail, and new phenomena need to be considered. For instance, one needs to take into account  the existence of geodesics in $\overline\Omega$ that are tangent to $\partial\Omega$. Arbitrarily small neighborhood of such geodesics, in the appropriate functional spaces, contain curves that leave $\Omega$ under shortening flows, hence standard gradient flows arguments do not work.

We define \emph{weak orthogonal geodesic chord}, \textbf{WOGC} for short, any nonconstant  geodesic chord $\gamma\colon[a,b]\to\overline\Omega$ starting from and arriving to $\partial\Omega$ orthogonally, and such that $\gamma(s)\in\partial\Omega$ for some $s\in\left]a,b\right[$. Although WOGCs may in principle exist, our method will only detect OGCs that touch the boundary only at their endpoints, relegating WOGCs to a secondary role with no contribution to the topological invariant (relative LS category) employed here.
\smallskip

\noindent\quad Besides the obvious geometrical appeal, the main interest in orthogonal geodesics chords in the non-convex case comes from classical dynamical systems. Maupertuis principle gives a bridge between solutions of a natural Lagrangian/Hamiltonian system having a fixed total energy value, with geodesics in configuration space endowed with a suitable conformal metric. In particular, the \emph{brake orbits} of the system, which form a special class of periodic solutions, correspond via Maupertuis principle to OGCs of the conformal metric in suitable open sets $\Omega_\delta$, whose closure is contained in the interior of the potential well and $\partial\Omega_\delta$ is close to the boundary. 
Such conformal metric  makes $\overline{\Omega_\delta}$   \emph{strongly concave}, i.e., with positive-definite second fundamental form in the exterior normal direction.
\subsection*{Brake orbits and Seifert's conjecture}
Let us illustrate briefly a Lagrangian formulation of the brake orbits problem (more details in subsection~\ref{sub:BOOGC}). An equivalent formulation can be given for Hamiltonian systems, via Legendre transform, but it will not be needed here.

Let $\widehat M$ be an  $N$-dimensional manifold with $\widehat M$ of class $C^3$  representing the configuration space of some dynamical systems and $\widehat g$ a Riemannian metric of class $C^2$. Let $V\colon \widehat M\to\mathds R$ be a $C^2$--function, representing the potential energy of some conservative force acting on the system.
One looks for periodic solutions $x\colon [0,T]\to\widehat M$ of the Lagrangian systems:
\begin{equation}\label{eq:lagrsystem}
\Ddt\dot x=-\nabla V(x),
\end{equation}
where $\tfrac{\mathrm D}{\mathrm dt}$ denotes the covariant derivative of the Levi--Civita connection of $\widehat g$ for vector fields along $x$, and $\nabla V$ is the gradient of $V$. Solutions of \eqref{eq:lagrsystem} satisfy the conservation law of the energy  $\frac12g(\dot x,\dot x)+V(x)=E$, where $E$ is a real constant called the \emph{energy} of the solution $x$. It is a classical problem to give estimate of the number of periodic solutions of \eqref{eq:lagrsystem} having a fixed value of the energy $E$. This problem has been, and still is, the main topic of a large amount of literature, also for more general autonomous Hamiltonian systems, 
see for instance \cite{LiuLong,LZ,Liu,Long,rab} and the references therein.
Among all periodic solutions of \eqref{eq:lagrsystem}, historical importance is given to a special class called \emph{brake orbits}; these are ``pendulum-like'' solutions, that oscillate with constant frequency along a trajectory that joins two distinct endpoints lying in $V^{-1}(E)$. 
\medskip

A very famous conjecture due to Seifert, see \cite{seifert}, originally formulated under analytic regularity assumptions, asserts  that, given a Lagrangian system as in \eqref{eq:lagrsystem}, if the sublevel $V^{-1}\big(\left]-\infty,E\right]\big)$ is homeomorphic to an $N$-disc and $E$ is a regular value for $V$, then there should exist at least $N$ geometrically distinct brake orbits.%
\footnote{Two brake orbits $q_1$ and $q_2$ are called \emph{geometrically distinct} if the sets $q_1(\mathds{R})$ and $q_2(\mathds{R})$ are distinct.}
This estimate is known to be sharp, i.e. there are examples of analytic Lagrangian systems having energy sublevels homeomorphic to an $N$-disk and admitting exactly $N$ geometrically distinct brake orbits.\footnote{%
Given constants $\lambda_1,\ldots,\lambda_N\in\mathds R^+\setminus\{0\}$, with $\lambda_i/\lambda_j\not\in\mathds Q$ when $i\ne j$, if one considers the potential $V=\sum\limits_{i=1}^N\lambda_i^2x_i^2$ in $\mathds R^N$, for every value $E>0$ there are exactly $N$ geometrically distinct periodic solutions  having energy $E$ of the corresponding Lagrangian system, and they are brake orbits.} To the present days, Seifert's conjecture  has been solved affirmatively in some cases, see for instance \cite{2bo, OT, LZ,LZZ,Z1,Z2,Z3}.
In particular, \cite{LZ} contains a proof of the Seifert conjecture for Euclidean metrics, when the potential is assumed even and convex. In \cite{2bo}, Seifert's conjecture is proved in the case $N=2$. In \cite{OT}, the conjecture is proved for perturbations of radial potentials.
When the $E$-sublevel $V^{-1}\big(\left]-\infty,E\right]\big)$ has the topology of the annulus, multiplicity of brake orbits is studied in \cite{arma} and \cite{JDE0}.   
\smallskip

The central result of the present paper (Theorem~\ref{thm:generale}) gives a lower bound on the number of orthogonal geodesics in Riemannian disks with strongly concave boundary, and satisfying a technical, nevertheless mild, additional geometric assumption. Namely, we will assume that there exists some point in $\Omega$ through which there exists no geodesic with both endpoints on $\partial\Omega$, and which is either tangent to $\partial\Omega$ at both endpoints, or tangent to $\partial\Omega$ at one endpoint and orthogonal to $\partial\Omega$ at the other (see \eqref{eq:nonsat}). 
Such assumption has a technical nature, that will be discussed later on, and it is possibly inessential for the validity of the result of Theorem~\ref{thm:generale}.
\smallskip

It is now a well established fact, see \cite{GGP1,cag}, that fixed energy brake orbits for the system \eqref{eq:lagrsystem}  
correspond to OGCs in a domain $\overline\Omega$ contained (and diffeomorphic to) the corresponding energy sublevel of the potential $V$. The metric 
in $\overline\Omega$, which is usually called the \emph{Jacobi metric}, is conformal to $g$, and it makes $\partial\Omega$ strongly concave.  
It is important to observe that the technical assumption \eqref{eq:nonsat} of Theorem~\ref{thm:generale} is satisfied by the Jacobi metric in the cases of interest (Proposition~\ref{prop:SGL}).
In view of these results, Theorem~\ref{thm:generale} yields a proof of  Seifert's conjecture:
\begin{mainthm}\label{thm:seifert} Let $E$ be a regular value of the potential $V$, and assume that the sublevel $V^{-1}\big(\left]-\infty,E\right]\big)$ is  homeomorphic to the $N$--dimensional disk.
Then, the Lagrangian system \eqref{eq:lagrsystem} admits at least $N$ geometrically distinct brake orbits of energy $E$. 
\end{mainthm}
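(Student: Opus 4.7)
The plan is to deduce Seifert's conjecture from Theorem~\ref{thm:generale} via the Maupertuis--Jacobi correspondence. Recall that on the open sublevel $\Omega = \text{int}\,V^{-1}\bigl(]-\infty,E]\bigr)$ the Jacobi conformal metric $g_E = 2(E-V)\widehat g$ turns energy-$E$ trajectories of \eqref{eq:lagrsystem} into its reparametrized geodesics, and a brake orbit of energy $E$ is, after time translation and reflection about a turning point, a half-orbit whose underlying curve is an OGC of $g_E$ in $\overline\Omega$ (orthogonality at the endpoints being forced by the vanishing of the velocity on $V^{-1}(E)$). Conversely each OGC of $g_E$ in $\overline\Omega$ extends by reflection to a brake orbit. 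Thus it suffices to produce $N$ geometrically distinct OGCs in $\overline\Omega$ for the metric $g_E$.

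Because $g_E$ is only a \emph{degenerate} conformal metric on $\overline\Omega$ (it vanishes on $\partial\Omega$), Theorem~\ref{thm:generale} cannot be applied to $\overline\Omega$ itself. The standard workaround is to truncate: fix $\delta > 0$ small enough that $E-\delta$ is a regular value of $V$, let $\Omega_\delta = \text{int}\,V^{-1}\bigl(]-\infty,E-\delta]\bigr)$, and equip $\overline{\Omega_\delta}$ with the (non-degenerate) restriction of $g_E$. For small $\delta$, $\overline{\Omega_\delta}$ is diffeomorphic to an $N$-disk and, as already recorded in \cite{GGP1,cag}, a direct computation of the second fundamental form of $\partial\Omega_\delta$ in the direction of $\nabla V$ shows that $\partial\Omega_\delta$ is strongly concave for $g_E$ (this is the reason the Jacobi metric is relevant to brake orbits in the first place). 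The technical assumption \eqref{eq:nonsat} required by Theorem~\ref{thm:generale} is furnished by Proposition~\ref{prop:SGL}. Theorem~\ref{thm:generale} then yields, for every sufficiently small $\delta > 0$, at least $N$ geometrically distinct OGCs $\gamma_1^\delta,\ldots,\gamma_N^\delta$ of $g_E$ in $\overline{\Omega_\delta}$.

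The last step is a limiting argument $\delta\downarrow 0$. Each $\gamma_i^\delta$ reparametrizes to an energy-$E$ solution of \eqref{eq:lagrsystem} whose endpoints lie on the level $V = E-\delta$ and whose velocities there are colinear with $\nabla V$. Standard a-priori estimates (energy conservation, uniform bounds on the $g_E$-length yielded by Theorem~\ref{thm:generale}, and equicontinuity on the Lagrangian side) allow passage to a $C^1_{\mathrm{loc}}$-limit $\gamma_i$ which is an energy-$E$ trajectory reaching $V^{-1}(E)$ with zero velocity at both endpoints: that is, a brake orbit of \eqref{eq:lagrsystem}.

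The principal obstacle is that \emph{distinctness} of the $\gamma_i^\delta$ need not survive the limit: two different approximate OGCs could collapse onto the same brake orbit as $\delta \to 0$. This is where strong concavity of $\partial\Omega_\delta$ is crucial: the positive definiteness of the second fundamental form prevents the endpoints of a limit geodesic from coinciding or degenerating (it provides a uniform lower bound on the length of any geodesic chord meeting $\partial\Omega_\delta$ orthogonally at a prescribed point), and, because $E$ is a regular value, no equilibrium of $V$ can be approached. Combined with the compactness of the set of energy-$E$ brake orbits away from equilibria, this forces the $N$ limiting brake orbits to be geometrically distinct, completing the proof.
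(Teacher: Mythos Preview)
Your proposal takes an unnecessarily circuitous route and contains a genuine gap in the final step. The paper's argument is much shorter: one assumes, for contradiction, that there are only finitely many brake orbits; then Proposition~\ref{prop:SGL} yields a \emph{single} $\delta$ for which $\Omega_\delta$ satisfies \eqref{eq:nonsat}, Theorem~\ref{thm:generale} produces $N$ geometrically distinct OGCs in $\overline{\Omega_\delta}$, and Proposition~\ref{thm:collective} asserts that each such OGC extends \emph{uniquely} to a $g_E$-geodesic in the full potential well with endpoints on $V^{-1}(E)$, i.e.\ to a brake orbit. Distinct OGCs therefore give rise to distinct brake orbits immediately, and no limiting procedure in $\delta$ is required.

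Your argument misreads Proposition~\ref{prop:SGL}: it is conditional on the finiteness of the set of brake orbits and guarantees only \emph{one} admissible $\delta$, not a family of values tending to $0$. Hence the sentence ``Theorem~\ref{thm:generale} then yields, for every sufficiently small $\delta>0$, at least $N$ geometrically distinct OGCs'' is not supported. Even if one granted that, your final paragraph does not prove what it claims. Nondegeneracy of each individual limit and compactness of the set of energy-$E$ brake orbits in no way prevent two distinct sequences $\gamma_i^\delta$, $\gamma_j^\delta$ (with $i\neq j$) from converging to the \emph{same} brake orbit as $\delta\to 0$; the heuristic about the second fundamental form controls lengths of individual chords, not separation between different ones. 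The unique-extension clause in Proposition~\ref{thm:collective} is exactly the tool that bypasses this collapse problem, and you have not invoked it.
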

The above is the final result which came from a series of papers on the multiplicity of brake orbits in a potential well homeomorphic to a ball, see \cite{JDE,2bo,OT}, where lower bounds for the number of brake orbits were obtained under additional assumptions. 

\subsection*{A brief overview of the proof}
Following along the lines of \cite{OT}, we will determine the OGCs in $\overline\Omega$ using topological methods and integral flows of suitable vector fields. More precisely, OGCs are determined as paths in a suitable functional space of curves, that are critical points of the geodesic action functional \emph{relatively} to appropriately defined admissible variations. Such variations are obtained as flow of (local) vector fields defined in the space of curves, that represent \emph{infinitesimal} admissible variation. We will define a class of infinitesimal admissible variations, denoted by $\mathcal V^-$, and a subclass of $\mathcal V^-$ denoted by $\mathcal V^+$, see Sections~\ref{sec:critPS} and \ref{sec:V+criticalcurves}. Roughly speaking, elements of $\mathcal V^-$ are obtained as vector fields $V_x$ along curves $x$ in $\overline\Omega$ with the property that $V_x(s)$ points \emph{inward} whenever $x(s)\in\partial\Omega$. Thus, the corresponding variation produces curves that remain in $\overline\Omega$. Elements of $\mathcal V^+$ satisfy the additional requirement of changing their direction near $\partial\Omega$: $V_x$ points inward when $x$ touches $\partial\Omega$, and outward when $x$ is at a certain (prescribed) small distance from $\partial\Omega$, see \eqref{eq:defV+(x)globale}.  

A path in $\Omega$ will be called $\mathcal V^-$-critical when it is fixed by the flow of every local vector field in the class $\mathcal V^-$. However, in the nonconvex case, there are $\mathcal V^-$-critical curves that are \emph{not} OGCs, but rather curves that belong to a more general class, introduced in \cite{MS} and called \emph{geodesics with obstacle}.
The set of $\mathcal V^-$-critical paths that are not OGCs, denoted by  $Z^-$ (see \eqref{eq:Z-sigma}), is described in Section~\ref{sub:obstpbst}.  Notably, the strong convavity assumption implies in particular that geodesics with obstacle that are orthogonal to $\partial\Omega$ at the endpoints and that have bound\-ed length, the contact set with $\partial\Omega$ consists of a uniformly bounded number of disjoint intervals and isolated points (Remark~\ref{rem:concavconseg}).

Geodesics with obstacle are, roughly speaking, curves having possibly low regularity (say, $C^1$, or more precisely $H^{2,\infty}$), that are made up by portions that either lie on the boundary $\partial\Omega$ (the \emph{contact set} with $\partial\Omega$), or that are geodesics segments contained in the interior $\Omega$. While geodesics with obstacle with prescribed boundary conditions can be found in any compact Riemannian manifold with smooth boundary, arbitrary Riemannian manifolds with boundary may not contain any \emph{true} orthogonal geodesic chord. A very elementary counterexample is depicted in Bos' paper \cite{bos}, who considers a simple triangular shaped region in $\mathds R^2$ with non-convex rounded corners. In this case, the lack of OGCs, i.e., segments orthogonal to the boundary at both endpoints, is immediately verified by inspection.

However, as already mentioned, in the special case of Jacobi metrics studied in the present paper, the existence of OGCs is proved by a construction that uses in an essentail way the property of strong concavity of the disk. 
Strong concavity has an important consequence, that will be exploited in our construction:
geodesics with both endpoints on the boundary of $\Omega$ cannot remain uniformly close to $\partial\Omega$ (see Remark~\ref{rem:1.4bis}). Note that this property does not hold in Bos' counterexample \cite{bos}.

Using the two classes $\mathcal V^-$ and $\mathcal V^+$ described above, we construct a global flow on the space of paths in $\overline\Omega$ with endpoints in $\partial\Omega$, which plays the role of the flow of a \emph{pseudo-gradient} vector field (see \cite{PalaisLS}) for the geodesic action functional.
 The pseudo-gradient field is constructed locally in distinct regions of the space of admissible paths whose mutual distance is strictly positive, and then made global using convex linear combinations. Reference \cite{PalaisLS} provides the basic tools for the globalization of local flows, using partitions of unity. The local and global constructions of the pseudo-gradient field are scattered throughout Sections~\ref{sec:critPS}, \ref{sec:V+criticalcurves} and \ref{sec:CVO}, see Proposition~\ref{thm:constrcampiV-}, Proposition~\ref{V-onglobal}, Proposition~\ref{thm:constrcampiV1-}, Proposition~\ref{V+onC*}, and Proposition~\ref{prop:etaglobale}.
%%%

One of the main technical parts of the paper consists in the construction of a special set $\Lambda_*$ of paths (see Section~\ref{sec:CVO}), that satisfies the following properties:
\begin{itemize}
\item $\Lambda_*$ is invariant by a pseudo-gradient flow (Proposition~\ref{prop:etaglobale}, item \eqref{itm:C*}), and the geodesic action functional is strictly decreasing along the inward pointing flow lines that are near the \emph{entrance of $\Lambda_*$}\footnote{%
Given a semi-group $(\phi_t)_{t\ge0}$ of homeomorphisms of a topological space $\mathcal X$, and given a subset $\mathcal Y\subset\mathcal X$ which is $\phi_t$-invariant for all $t$, the \emph{entrance} of $\mathcal Y$ is the  set of the $x \in Y$ such that there exists $\delta_x >0$ such that $\phi_t(x) \not\in Y$ for any $t \in ]-\delta_x,0[$.
%Given a semi-group $(\phi_t)_{t\ge0}$ of homeomorphisms of a topological space $\mathcal X$, and given a subset $\mathcal Y\subset\mathcal X$ which is $\phi_t$-invariant for all $t$, the \emph{entrance} of $\mathcal Y$ is the closure of the set $\big\{x\in\mathcal X\setminus\mathcal Y:\phi_t(x)\in\mathcal Y\ \forall\, t>0\big\}$. 
In our concrete setting, the entrance set of $\Lambda_*$ is denoted by $\Gamma_*$ and it is defined in \eqref{eq:Gamma*}.} (see also 
Proposition~\ref{prop:etaglobale}, item \eqref{itm:G*});
\item $\Lambda^*$ contains in its interior all the geodesics with obstacle that are orthogonal to $\partial\Omega$ at both endpoints;
\item $\Lambda^*$ contains all the WOGCs;  
\item $\Lambda^*$  does \emph{not} contain true OGCs; 
\item $\Lambda_*$ is \emph{topologically trivial}, meaning that it can be continuously retracted to a set of curves lying entirely on $\partial\Omega$, through a retraction that fixes the constant curves (Proposition~\ref{prop:sublevels}). What this implies, roughly speaking, is that points in $Z^-$ are not counted in the minimax argument, and therefore the topological invariant (relative Lusternik--Schnirelman category) employed in the minimax argument gives a lower bound for true OGCs.
\end{itemize}
{The global flow is defined using homotopies associated to the infinitesimal variations in $\mathcal V^+$ in $\Lambda_*$, to the more general variations in $\mathcal V^-$ far from $\Lambda_*$, and with convex combinations near the entrance set of $\Lambda_*$.} The technical geometric assumption of Theorem~\ref{thm:generale} mentioned above is used to construct the continuous retraction of $\Lambda_*$ onto a set of curves lying in $\partial\Omega$.% 
\smallskip

Once the pseudo-gradient flow and the set $\Lambda_*$ have been defined, the proof of Theorem~\ref{thm:generale} follows standard general ideas from minimax theory (Section~\ref{sec:CVO}).  By the above construction, the fixed points of our flow that lie outside of $\Lambda_*$ are OGCs. The minimax procedure detects a number of points fixed by the flow (outside $\Lambda_*$) which is greater than or equal to the Lusternik-Schnirelman relative category of a set (see \eqref{eq:CC0}) which has the topology of the quotient space $(\mathbb S^{N-1}\times\mathbb S^{N-1})/\mathcal R$, where $\mathcal R(A,B)=(B,A)$ (category relative to the diagonal of $\mathbb S^{N-1}\times\mathbb S^{N-1}$). Such number is equal to $N$, see \cite[Appendix~A]{esistenza} for the details of the computation. Since the set $\Lambda_*$ can be continuously retracted to a set consisting of curves lying in the boundary of $\Omega$,  geodesics with obstacle do not contribute to the count of those fixed points detected by minimax. This implies that the strongly concave Riemannian $N$-disk under consideration possesses at least $N$ distinct OGCs, proving our desired result.

\subsection*{A short guide to an amicable reading}
With the aim of helping the reader to a more amicable reading of the paper, we give here a short presentation of the objects that will appear in our construction of the main result, with a description of their role in the proof.
\smallskip

First, we define a set of trial paths for our variational problem, which is denoted by $\mathfrak M$, see \eqref{eq:defM}, that is simply the set of $H^1$-curves in $\overline\Omega$ with free endpoints in $\partial\Omega$. 
Inside $\mathfrak M$, we define the sets $\mathfrak C_0\subset\mathfrak C$, see \eqref{eq:CC0}. Morally, $\mathfrak C$ is the set of \emph{straight segments/chords} in $\overline\Omega$ with endpoints in $\partial\Omega$. 
The point here is that, in principle, we are only assuming that $\overline\Omega$ is only \emph{homemorphic}, and not necessarily diffeomorphic, to an $N$-disk. Thus, straight segments in principle do not have the desired $H^1$-regularity. An essential point is that $\mathfrak C$ must be invariant by the \emph{backwards reparameterization operator} on $\mathfrak M$, that is denoted by $\mathcal R$, see \eqref{eq:2.4}. Clearly, the set of OGCs is $\mathcal R$-invariant; Given an OGC $\gamma$, then $\mathcal R(\gamma)$ is not geometrically distinct from $\gamma$, which implies that any multiplicity result must be obtained in the quotient space $\widetilde{\mathfrak M}=\mathfrak M/\mathcal R$. Thus, all our constructions require appropriate $\mathcal R$-invariant functions/sets or $\mathcal R$-equivariant flows. Details for the construction of $\mathfrak C$ are found in Lemma~\ref{thm:corde}. The subset $\mathfrak C_0$ consists of elements of $\mathfrak C$ that have the same initial and endpoint, i.e., constant curves taking value on $\partial\Omega$; note that $\mathcal R$ is the identity on $\mathfrak C_0$.\smallskip

The set $\mathfrak M$ is the domain for the classical geodesic action functional $\mathcal F$, see \eqref{eq:enfunct}; we search for critical points of $\mathcal F$ in $\mathfrak M$ that touch $\partial\Omega$ only at the endpoints. Due to the lack of convexity of $\partial\Omega$, standard minimax methods applied to the flow of the gradient field of $\mathcal F$ do not work here.  We will define a pseudo-gradient for $\mathcal F$, i.e., a \emph{(future) complete} vector field on $\mathfrak M$ 
with the property that $\mathcal F$ is strictly decreasing along its non-constant flow lines, and whose singularities outside $\Lambda_*$ are OGCs. 
The flow of the pseudo-gradient field will be denoted by $\eta_*$, and introduced formally in Proposition~\ref{prop:etaglobale}.
A suitable choice of topological invariant for subsets of $\widetilde{\mathfrak M}$ (relative category, see Definition~\ref{def:10.1}) and a modified minimax argument will allow to detect those singularities of the pseudo-gradient that are true orthogonal geodesic chords. As usual, the minimax argument requires a Palais--Smale property of the flow.\smallskip

The minimax setup requires that one considers paths satisfying an upper bound on their length. Such upper bound is denoted by $M_0$, and it appears for the first time in \eqref{eq:nonsat}. The constant $M_0$ is required to satisfy two distinct properties, see Theorem~\ref{thm:generale}. First, it must be grater than or equal to the length of every path in $\mathfrak C$, which guarantees that one has enough critical values defined by the minimax method, see \eqref{eq:10.3}. Second, it must ensure that the technical assumption \eqref{eq:nonsat} of Theorem \ref{thm:generale} is satisfied. The flow will then be defined on the $M_0^2$-sublevel of the action functional $\mathcal F$.\smallskip

Next, we determine a positive lower bound for the length/energy of $\mathcal F$-critical paths, which guarantees nontriviality (constant paths lying on the boundary of $\Omega$ are trivial $\mathcal F$-critical paths). The choice of such a lower bound is related to the geometry of 
$\overline\Omega$, more specifically, to the measure of its concavity. The proximity to $\partial\Omega$ is 
measured in terms of a \emph{regularized signed distance function} $\phi\colon\overline\Omega\to\left]-\infty,0\right]$, introduced in Section~\ref{sub:phi}. The strong concavity of $\overline\Omega$ is expressed in terms of the Hessian of $\phi$ which is assumed negative definite in the directions tangent to $\partial\Omega$. Small neighborhoods of $\partial\Omega$ in $\Omega$ are described as superlevels $\phi^{-1}\big(\left]-\delta,0\right]\big)$, for small $\delta>0$. The strong concavity assumption implies the existence of $\delta_0>0$ such that every non-trivial geodesic in $\overline\Omega$ with endpoints in $\partial\Omega$ must contain some point in $\phi^{-1}\big(\left]-\infty,-\delta_0\right]\big)$, see Remark~\ref{rem:1.4bis}. This simple observation implies that, if $K_0$ denotes the maximum of $\Vert\nabla\phi\Vert$ (formula \eqref{eq:ko}), then there cannot be nontrivial OGCs $\gamma$ in $\overline\Omega$ satisfying $\mathcal F(\gamma)\le\frac{\delta_0^2}{K_0^2}$. This gives a positive lower bound for the energy of nontrivial $\mathcal F$-critical paths. \smallskip

The pseudo-gradient dynamics on the space $\mathfrak M$ or, more precisely, on the sublevel $\mathcal F^{-1}\big([0,M_0^2]\big)$, is obtained by integrating a certain vector field on $\mathfrak M$, whose construction is one of the main technical parts of the paper. Recall that, for $x\in\mathfrak M$, the tangent space $T_x\mathfrak M$ consists of vector fields along $x$, satisfying suitable regularity assumptions, and that are tangent to $\partial\Omega$ at the endpoints of $x$. Around generic paths in $\mathfrak M$ that do not touch $\partial\Omega$ except at the endpoints, such vector field is simply the gradient field of $\mathcal F$ with respect to the natural Hilbert manifold structure of $\mathfrak M$, see Section~\ref{sub:hilbstruct}. However, the gradient field has to be modified around paths that admit interior points \emph{close} to $\partial\Omega$. Note that the lack of convexity implies that the standard gradient flow for $\mathcal F$ cannot be future complete, as it may carry paths that initially stay away from $\partial\Omega$ to paths that are somewhere tangent to $\partial\Omega$, for which the gradient flow is no longer defined. \smallskip

For the construction of the pseudo-gradient field, a typical procedure adopted here is to give a \emph{pointwise definition}, by assigning  its value along specific curves $x\in\mathfrak M$ (this means, giving a vector field \emph{along} $x$), then extended locally. The local constructions are finally merged to produce a globally defined vector field using partitions of unity, as in \cite{PalaisLS}. Given $x\in\mathfrak M$, the closed convex cone $\mathcal V^-(x)$ (see \eqref{eq:V-sigma}) is the set of $V\in T_x\mathfrak M$ that point \emph{inside} $\Omega$ at those points of $x$ that lie on $\partial\Omega$. A path $x\in\mathfrak M$ is defined to be $\mathcal V^-$-critical if the derivative of $\mathcal F$ at $x$ in the direction of every $V\in\mathcal V^-(x)$ is nonnegative. As discussed above, not every $\mathcal V^-$-critical points are OGCs.
\smallskip

The technical condition  \eqref{eq:nonsat} of Theorem \ref{thm:generale} requires the existence of some point $x_0\in\Omega$ and a suitable ball $B_{\rho_0}(x_0)$ centered at $x_0$ (see \eqref{eq:Brho}) which is not run across by what we call \emph{special geodesics}, see details in the lines preceding formula \eqref{eq:GL}. The closure of such ball has positive distance from the boundary of $\Omega$, and the constant $\delta_1$ (less than or equal to $\delta_0$), defined in \eqref{eq:delta1}, determines the superlevel of $\phi$ having empty intersection with the closure of   $B_{\rho_0}(x_0)$. 
By Corollary~\ref{smallstrip}, this implies that every $x\in
\mathcal F^{-1}\big([0,\delta_1^2/K_0^2]\big)$ lies in $\phi^{-1}\big(\left[-\delta_1,0\right]\big)$, and ultimately, by Proposition~\ref{prop:sublevels}, that  $\mathcal F^{-1}\big([0,\delta_1^2/K_0^2]\big)\bigcup\Lambda_*$ can be retracted to a set consisting of paths lying on the boundary of $\Omega$.

\textbf{Acknowledgments}. The authors gratefully acknowledge the important contribution given by Dario Corona and Isabel Beach, who raised important questions on previous versions of the paper through many discussions.

\section{Preliminaries: notations, terminology and some basic facts}
\label{sec:preliminaries}
\subsection{Riemannian preliminaries}\label{sub:RiemPrel}
Let us assume that $M$ is an $N$--dimensional differentiable manifold of class $C^3$, and that $M$ is endowed with a Riemannian metric tensor $g$ which is of class $C^2$
(this regularity guarantees the uniqueness of the solution for the Cauchy problem of geodesics).

\begin{rem}\label{rem:distictep}
Observe that if $\gamma\colon[0,1]\to\overline\Omega$ is a non-constant orthogonal geodesic chord, then $\gamma(0) \neq \gamma(1)$, by the uniqueness of the solution for the geodesic Cauchy problem. Indeed if  $\gamma(0)=\gamma(1)$ then $\gamma(t)=\gamma(1-t)$ for all $t$, hence $\dot \gamma(\frac12)=0$.
\end{rem}
The symbol $\nabla$ will denote the covariant derivative of the Levi-Civita connection of $g$, as well as the gradient differential operator with respect to $g$ on $M$. The Hessian $\mathrm H^f(q)$ of a smooth map $f\colon M\to\mathds R$ at
a point $q\in M$ is the symmetric bilinear form $\mathrm
H^f(q)(v,w)=g\big((\nabla_v\nabla f)(q),w\big)$ for all $v,w\in
T_qM$; equivalently, $\mathrm H^f(q)(v,v)=\frac{\mathrm
d^2}{\mathrm ds^2}\big\vert_{s=0} f(\gamma(s))$, where
$\gamma\colon\left]-\varepsilon,\varepsilon\right[\to M$ is the unique affinely parameterized
geodesic in $M$ with $\gamma(0)=q$ and $\dot\gamma(0)=v$. We will denote by $\Dds$ the covariant
derivative along a curve, in such a way that $\Dds\dot x=0$ is the
equation of the geodesics. A basic reference on the background material for Riemannian geometry is \cite{docarmo}.

\subsection{The distance from the boundary and the function \texorpdfstring{$\phi$}{phi}}\label{sub:phi}
Consider the distance function $\overline\Omega\ni x\mapsto d(x,\partial \Omega)$, where $d$ is the distance induced by $g$. This map is of class $C^2$ \emph{near} $\partial\Omega$. Namely, $d(x,\partial\Omega)=\Vert(\exp^\perp)^{-1}x\Vert$, where $\exp^\perp$ is the normal exponential function on $\partial\Omega$ and $\Vert \cdot \Vert$ is the norm induced by $g$.
\smallskip

Let $\delta_*>0$ be such that $x\mapsto d(x,\partial\Omega)$ is of class $C^2$ in $\big\{x\in\overline\Omega:d(x,\partial\Omega) \leq \delta_*\big\}$. 
In the rest of the paper we shall denote by $\phi$ a fixed function of class $C^2$ on $\overline \Omega$ such that
\begin{equation}\label{eq:phi}
\phi(x)=-d(x,\partial \Omega) \text{ if }d(x,\partial \Omega) \leq \delta_* \text{ and }
\phi(x) < -\delta_* \text{ if }d(x,\partial\Omega) > \delta_*.
\end{equation}
By this choice 
\begin{equation}\label{eq:gradientnorm}
\Vert \nabla \phi(x) \Vert=1, \text{ for any }x \in \phi^{-1}\big([0,\delta_*]\big).
\end{equation}

\subsection{Strong concavity}
The multiplicity result of Bos (\cite{bos}) is  proved assuming $\overline\Omega$ to have a smooth boundary and to be   convex and homeomorphic to the $N$--dimensional disk. In this case, the convexity of $\overline \Omega$ means that the Hessian of the map  $\phi$   is positive semi-definite on $\partial \Omega$ along the tangent directions to $\partial \Omega$:
\[
\mathrm H^\phi(x)(v,v) \geq 0
\text{ for any }x\in\partial\Omega,\ v\in
T_x\partial\Omega.
\]
As mentioned above, a counterxample to the existence of OGCs in nonconvex Riemannian discs is given in \cite{bos}.
In this paper we assume a strong concavity condition of $\overline \Omega$ whose definition can be again given in terms of the map $\phi$.
\begin{defin}\label{def:stronglyconcave} The domain $\overline\Omega$ is called {\em strongly concave} if 
\[
\mathrm H^\phi(x)(v,v) < 0
\text{ for any }x\in\partial\Omega,\ v\in
T_x\partial\Omega, v\not=0.
\]
\end{defin}
\noindent Note that if $\overline\Omega$ is  strongly concave, then geodesics
starting on $\partial\Omega$ tangentially to $\partial\Omega$
locally move \emph{inside} $\Omega$.

\begin{rem}\label{rem:1.4}
Strong concavity is evidently a {\em $C^2$-open condition}.
Then, by compactness, there exists $\delta_0 \in\left ]0,\delta_*\right]$
such that $H^\phi(q)[v,v]<0$ for any $q$ in $\phi^{-1}\big([-\delta_0,0]\big)$
and any $v \not=0$ such that $g(\nabla\phi(q),v)=0$.
\end{rem}

\begin{rem}\label{rem:1.4bis}
The strong concavity condition gives us the following property, that will be systematically used throughout the paper. Let $\delta_0$ be as in Remark~\ref{rem:1.4}; then:
\begin{equation}\label{eq:1.1bis}
\begin{matrix}
\text{for any non-constant geodesic $\gamma\colon[a,b]\to\overline\Omega$ with $\phi\big(\gamma(a)\big)=\phi\big(\gamma(b)\big)=-\delta \in\left]-\delta_0,0\right]$}\\
\text{and $\phi\big(\gamma(s)\big)<-\delta\,\forall s\in\left]a,b\right[$, there exists $\overline s\in\left]a,b\right[$ such that $\phi\big(\gamma(\overline s)\big)<-\delta_0$.}
\end{matrix}
\end{equation}
Such property is proved easily by a contradiction argument, looking at the minimum point of the map $s\mapsto\phi\big(\gamma(s)\big)$.
\end{rem}

\begin{rem}\label{rem:totgeo}
Using the fact that $\nabla\phi\ne 0$  in $\phi^{-1}\big([-\delta_0,0]\big)$, one obtains easily that  an open neighborhood of $\phi^{-1}\big([-\delta_0,0]\big)$ is diffeomorphic to the product $I\times\partial\Omega$, where $I\subset\mathds R$ is a neighborhood of $[-\delta_0,0]$, and with $\{-\sigma\}\times\partial\Omega$ corresponding to $\phi^{-1}(-\sigma)$ for all $\sigma\in[-\delta_0,0]$. Choosing a product metric on $I\times\partial\Omega$, one obtains a metric $\overline g$ (defined on an open neighborhood of $\phi^{-1}\big([-\delta_0,0]\big)$, and then extended to $M$) such that $\phi^{-1}(\sigma)$ is $\overline g$-totally geodesic for all $\sigma\in[-\delta_0,0]$. Denoting by $\overline\exp$ the exponential map of $\overline g$, the totally geodesic property means that there exists a sufficiently small neighborhood $\mathcal N$ of the zero section of $TM$ such that,  setting $\mathcal N_p=\mathcal N \cap T_pM$ for all $p \in  \phi^{-1}(-\sigma)$, it is:
\begin{equation}\label{eq:TGsigma}
\mathcal N_p \cap (\overline\exp_p)^{-1}(\phi^{-1}(-\sigma)) \subset T_p(\phi^{-1}(-\sigma)) \text { for all }p \in \phi^{-1}(-\sigma).
\end{equation}
This construction is made using a partition of the unity argument; more precisely, the metric $\overline g$ is obtained using the Euclidean structure on local charts having the function $\phi$ as last coordinate. Using the smoothness of the Euclidean exponential map, and the $C^3$-regularity of $M$, the corresponding exponential map $\overline\exp$ is also of class $C^3$. 
\end{rem}
\subsection{Brake orbits and OGCs}\label{sub:BOOGC}
We will give here a very short account of a geometric approach to periodic solutions of \eqref{eq:lagrsystem}, 
and at the end of Section~\ref{sub:varframe} we will  show how to obtain a proof of Seifert conjecture using the multiplicity of OGCs.

Let $(\widehat M,\widehat g)$ be a Riemannian $N$--dimensional manifold representing the configurations space \eqref{eq:lagrsystem}.
By the classical Maupertuis principle, solutions of \eqref{eq:lagrsystem} having energy $E$ are, up to a parameterization, geodesics in the conformal metric $g_E$, the \emph{Jacobi metric}:
\begin{equation}\label{eq:maupertuismetric}
g_E=\big(E-V(p)\big)\cdot\widehat g,
\end{equation}
defined in the open $E$-sublevel 
$M^E=V^{-1}\big(\left]-\infty,E\right[\big)$ of $V$. Observe that, in fact, $g_E=0$  on the boundary $\partial M^E=V^{-1}(E)$. 

Thus, brake orbits correspond to $g_E$-geodesics
in $M^E$ with endpoints in $\partial M^E$, or, more precisely, to $g_E$-geodesics
$\gamma\colon\left]0,T\right[\to V^{-1}\big(\left]-\infty,E\right[\big)$, with $\lim\limits_{t\to0^+}\gamma(t)$ and $\lim\limits_{t\to T^-}\gamma(t)$ in $\partial M^E$.

For such a degenerate situation, it is proved in \cite{GGP1,cag} that, if $E$ is a regular value of the function $V$ (which implies in particular that $\partial M^E=V^{-1}(E)$ is a smooth hypersurface of $\widehat M$), then $g_E$ defines a distance-to-the-boundary function
\[
\mathrm{dist}_E\colon M^E\longrightarrow\left[0,+\infty\right[
\] which is of class $C^2$ in  $M^E$, near $\partial M^E$, and which extends continuously to  $0$ on the boundary $\partial M^E$. 
As in the nonsingular case, the distance from the boundary function is given in terms of an infimum of length of curves: 
\begin{multline*}
\mathrm{dist}_E(Q,V^{-1}(E))=\\
\inf\Big\{
\int_0^1\sqrt{(E-V(x))g(\dot x, \dot x)}\,\mathrm ds: 
x \in C^1\big([0,1],V^{-1}(]-\infty,E]\big),\\
 x(0) \in V^{-1}(E),\ V(x(s)) < E \text{ for all }s \in\left]0,1\right],\ x(1)=Q\Big\}.
\end{multline*}

\begin{prop}\label{thm:collective}
There exists $\widehat \delta$ such that for any $\delta \in\left ]0,\widehat\delta\right]$, any OGC in the
Riemannian manifold with boundary ${M^E_\delta}=\mathrm{dist}_E^{-1}\big(\left[\delta,+\infty\right[\big)\bigcap V^{-1}\big(\left]-\infty,E\right[\big)$ endowed with the metric $g_E$ (which is now non-singular) can be extended uniquely to a $g_E$-geodesic $\gamma$ in the potential well $\overline{M^E}=V^{-1}\big(\left]-\infty,E\right]\big)$ with endpoints in $\partial M^E$, and $M_\delta^E$ is homeomorphic to  $\overline{M^E}$. 

Moreover, (when $M^E$ is endowed with the metric $g_E$) $M_\delta^E$ is strongly concave.  More precisely,
the Hessian of the distance-to-the-boundary function $\mathrm{dist}_E$ satisfies:
\begin{equation}\label{eq:concavjacobi}
\begin{aligned}
\mathrm H^{\mathrm{dist}_E}(x)(v,v) > 0&
\text{ for every }x\in M^E\ \text{such that } 0<\mathrm{dist}_E(x) \leq \widehat\delta,\\
 \text{ and every }& \text{ $v\ne0$ such that } g_E\big(\nabla \mathrm{dist}_E(x),v\big)=0.
\end{aligned}
\end{equation}
\end{prop}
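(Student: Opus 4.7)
The plan is to attack the three claims by working in a $\widehat g$-normal tubular neighborhood of $\partial M^E$, which exists because $E$ is a regular value of $V$ and hence $\partial M^E$ is a smooth compact hypersurface with $\alpha(p):=\Vert\nabla^{\widehat g}V(p)\Vert_{\widehat g}>0$ for every $p\in\partial M^E$. Parameterizing a collar of $\partial M^E$ by $(p,r)$ with $r$ the $\widehat g$-distance to $\partial M^E$, the Taylor expansion of $V$ along the $\widehat g$-inward normal geodesic at $p$ yields
\begin{equation*}
V(p,r)=E-\alpha(p)\,r+O(r^2),
\end{equation*}
so the conformal factor $E-V$ vanishes linearly at $\partial M^E$. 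Integrating $\sqrt{E-V}$ along these normal curves and showing (following \cite{GGP1,cag}) that they are the $g_E$-minimizers up to second order produces the asymptotic
\begin{equation*}
\dist_E(p,r)=\tfrac{2}{3}\sqrt{\alpha(p)}\,r^{3/2}+O(r^{5/2}),
\end{equation*}
from which one reads off both the $C^2$-regularity of $\dist_E$ in the interior and its continuous extension to zero on the boundary. This expansion is the workhorse behind all three claims.

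With it in hand the extension and homeomorphism statements proceed as follows. Let $\gamma$ be a $g_E$-OGC of $M^E_\delta$; at each endpoint $\gamma$ is orthogonal to the level set $\dist_E^{-1}(\delta)$, and the Eikonal identity $\Vert\nabla^{g_E}\dist_E\Vert_{g_E}=1$ (valid near $\partial M^E$) forces the initial velocity to be parallel to $\mp\nabla^{g_E}\dist_E$. The unique $g_E$-geodesic with this initial condition is the integral curve of that gradient, which via the Maupertuis correspondence is a brake orbit of \eqref{eq:lagrsystem} reaching $V^{-1}(E)$ with zero $\widehat g$-velocity in finite Lagrangian time. This gives the canonical extension of $\gamma$ to a $g_E$-geodesic in $\overline{M^E}$ with both endpoints on $\partial M^E$. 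The homeomorphism $M^E_\delta\cong\overline{M^E}$ is then obtained by pushing the collar $\dist_E^{-1}\bigl([0,\delta]\bigr)$ outward along the same integral curves, continuity following from the $C^2$-regularity of $\dist_E$ in the interior together with the above boundary expansion.

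The main technical work is the Hessian inequality \eqref{eq:concavjacobi}. For $v$ tangent to the level set $\dist_E^{-1}(\delta)$ one has $v(\dist_E)=0$, hence $H^{\dist_E}(v,v)=-g_E(\nabla^{g_E}_v v,\,\nabla^{g_E}\dist_E)$. The conformal transformation formula
\begin{equation*}
\nabla^{g_E}_X Y=\nabla^{\widehat g}_X Y-\tfrac{1}{2(E-V)}\bigl[X(V)\,Y+Y(V)\,X-\widehat g(X,Y)\,\nabla^{\widehat g}V\bigr]
\end{equation*}
reveals that the dominant contribution to $\nabla^{g_E}_v v$ near $\partial M^E$ is $\tfrac{\widehat g(v,v)}{2(E-V)}\nabla^{\widehat g}V$. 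Pairing this against $\nabla^{g_E}\dist_E$, which to leading order is parallel to $\nabla^{\widehat g}V$ and of $g_E$-unit length, produces a strictly positive term of size $O(r^{-1/2})$ in $H^{\dist_E}(v,v)$, while all remaining contributions (the $\widehat g$-Hessian of $\dist_E$, the curvature of $\widehat g$, and the tangential parts of the conformal correction) are bounded of order $O(r^{1/2})$ or better. Hence for $\widehat\delta$ sufficiently small the strict sign is forced on the compact hypersurface $\partial M^E$, and the conclusion follows by compactness. The delicate point, and the \textbf{main obstacle}, is to verify that the formally singular terms of the conformal correction assemble into a genuinely positive quantity rather than cancelling --- this is precisely where the regularity of $E$ as a level of $V$, which keeps $\alpha(p)$ bounded away from zero, is indispensable.
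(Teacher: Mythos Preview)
The paper does not actually prove this proposition; it simply cites \cite[Proposition~5.8 and Theorem~5.9]{GGP1}. Your proposal is therefore not competing with any argument given in the paper but is instead a sketch of what that external reference presumably contains. Judged on its own terms, your outline is sound and the key mechanisms are correctly identified: the linear vanishing of $E-V$ at $\partial M^E$, the resulting $r^{3/2}$ asymptotic for $\dist_E$, the extension of OGCs along gradient lines of the eikonal function $\dist_E$, and the blow-up of the conformal Christoffel correction producing a dominant positive term in $H^{\dist_E}(v,v)$.

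Two points deserve a word of caution. First, the claim that the $\widehat g$-normal geodesics realize $\dist_E$ ``up to second order'' needs a genuine argument (this is where the Gauss-lemma-type statement for the degenerate conformal metric enters, and is one of the nontrivial parts of \cite{GGP1,cag}); you invoke it but do not indicate how it is proved. Second, in your Hessian computation you tacitly use that $v(V)$ is small when $g_E(v,\nabla^{g_E}\dist_E)=0$, but the $\dist_E$-level sets are not exactly the $V$-level sets (they differ by a factor depending on $\alpha(p)$), so one must check that the discrepancy contributes only at subleading order $O(r^{1/2})$; this is true, but should be stated. Neither of these is a gap in the sense of a wrong idea --- the structure of your argument is correct --- but both are places where a reader could reasonably ask for another line of justification.
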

\begin{proof}
See \cite[Proposition~5.8 and Theorem~5.9]{GGP1}.
\end{proof}
In order to prove Seifert's conjecture by multiplicity of OGCs, we will need also the following result, whose proof is obtained from Maupertuis' Principle and from the property that, on the boundary of the potential well, the gradient of the potential function is nowhere vanishing.
\begin{lem}\label{lem:jconv}
Assume that $\gamma_n\colon[0,1]\to V^{-1}\big(\left]-\infty,E\right[\big)$ is a sequence of Jacobi geodesics and $L>0$ such that 
\begin{itemize}
\item[(a)]  $\gamma_n(0) \to P \in V^{-1}(E)$ as $n\to\infty$;\smallskip

\item[(b)] $\dist_E\big(\gamma_n(1)\big)=\dist_E\big(\gamma_n(0)\big)$, and  $\dist_E\big(\gamma_n(s)\big)>\dist_E\big(\gamma_n(0)\big)$ for all $s \in\left]0,1\right[$;\smallskip

\item[(c)] $0<\int_0^1 g_E(\dot \gamma_n,\dot \gamma_n)\,\mathrm ds\leq L^2$, for all $n$.
\end{itemize}
Then there exists a brake orbit $q$ starting from $P$, such that 
\begin{equation}\label{eq:limitbo}
\lim_{n\to\infty}\ \sup_{s\in[0,1]}\widehat d\big(\gamma_n(s),q(\mathds{R})\big)=0,
\end{equation}
where $\widehat d$ is the distance function relative to the metric $\widehat g$ of the configuration space.
\end{lem}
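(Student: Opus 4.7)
The plan is to invoke Maupertuis' principle in reverse, reparameterizing each Jacobi geodesic $\gamma_n$ with the ``physical'' time $t$ determined by $\tfrac12\widehat g(\dot x_n,\dot x_n)+V(x_n)=E$. This turns $\gamma_n$ into a solution $x_n\colon[0,T_n]\to\widehat M$ of the Lagrangian equation $\tfrac{\mathrm D}{\mathrm dt}\dot x_n=-\nabla V(x_n)$ with energy $E$, having the same image as $\gamma_n$. From (a) and $V(P)=E$ we get $x_n(0)\to P$ and $|\dot x_n(0)|_{\widehat g}^2=2\bigl(E-V(x_n(0))\bigr)\to 0$; by (b), the same holds at $t=T_n$. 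Hence the phase-space initial data converge: $\bigl(x_n(0),\dot x_n(0)\bigr)\to(P,0)$ in $T\widehat M$.

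Next I would define the candidate limit. Since $E$ is a regular value, $\nabla V(P)\ne 0$, so the unique $C^2$-solution $q\colon\mathds R\to\widehat M$ of the Lagrangian equation with $q(0)=P,\ \dot q(0)=0$ is a nontrivial brake orbit: by energy conservation $V\circ q\le E$ with equality at $t=0$, and $\ddot q(0)=-\nabla V(P)\ne 0$, so $q$ oscillates between $P$ and a second brake point $Q\in V^{-1}(E)$, with half-period $T_q>0$ and full period $2T_q$. Because $V\in C^2$, the vector field $(x,v)\mapsto\bigl(v,-\nabla V(x)\bigr)$ on $T\widehat M$ is $C^1$, so the standard continuous dependence of ODE solutions on initial data yields, for every fixed $\tau>0$, uniform convergence $x_n|_{[0,\tau]}\to q|_{[0,\tau]}$ in the $\widehat g$-norm as $n\to\infty$.

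The final step is to control $T_n$ using hypothesis (c). A direct Maupertuis computation gives
\[
\ell_{g_E}(\gamma_n)=\sqrt 2\,\mathcal A(x_n),\qquad \mathcal A(x_n):=\int_0^{T_n}\tfrac12|\dot x_n|_{\widehat g}^2\,\mathrm dt=\int_0^{T_n}\bigl(E-V(x_n)\bigr)\,\mathrm dt.
\]
Since each $\gamma_n$ is a $g_E$-geodesic, $g_E(\dot\gamma_n,\dot\gamma_n)$ is constant in $s$, so (c) forces $\ell_{g_E}(\gamma_n)\le L$, whence $\mathcal A(x_n)\le L/\sqrt 2$. As the action of $q$ over a single half-period, $A_q:=\int_0^{T_q}\bigl(E-V(q)\bigr)\,\mathrm dt$, is strictly positive, this bounds, up to a subsequence, the number of consecutive half-periods of $q$ that $x_n$ can approximate; in particular there exists $\tau^*$ independent of $n$ with $T_n\le\tau^*$. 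Writing $\gamma_n(s)=x_n(t_n(s))$ with $t_n(s)\in[0,T_n]\subset[0,\tau^*]$, uniform convergence on $[0,\tau^*]$ then gives
\[
\sup_{s\in[0,1]}\widehat d\bigl(\gamma_n(s),q(\mathds R)\bigr)\le\sup_{t\in[0,\tau^*]}\widehat d\bigl(x_n(t),q(t)\bigr)\longrightarrow 0,
\]
which is \eqref{eq:limitbo}.

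The hard part is the last step: the Jacobi metric degenerates on $V^{-1}(E)$, where all initial and final data accumulate, so in principle a $g_E$-geodesic of bounded length could correspond to a ``physical'' time $T_n$ that blows up as $\gamma_n$ lingers near $\partial M^E$. The bound $\mathcal A(x_n)\le L/\sqrt 2$ combined with the positivity of $A_q$ — which itself rests on $\nabla V(P)\ne 0$, i.e.\ on $E$ being a regular value — is what forces $T_n$ to remain bounded along a subsequence, converting the singular $g_E$-length bound into a genuine compactness statement for the underlying Hamiltonian dynamics.
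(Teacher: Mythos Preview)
Your overall strategy is sound and differs from the paper's: you pass to the limit first, defining the candidate $q$ as the solution of $\Ddt\dot q=-\nabla V(q)$ with $q(0)=P$, $\dot q(0)=0$, and then bound $T_n$ via the action identity $\mathcal A(x_n)=\ell_{g_E}(\gamma_n)/\sqrt2\le L/\sqrt2$. But the second paragraph contains a genuine gap. You assert that $q$ ``oscillates between $P$ and a second brake point $Q\in V^{-1}(E)$, with half-period $T_q>0$.'' From $\nabla V(P)\ne0$ you obtain only that $q$ is nonconstant; nothing you have written prevents $q$ from remaining in the open well $M^E$ for all $t>0$, for instance by being forward-asymptotic to a periodic orbit in the interior. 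That $q$ returns to $V^{-1}(E)$ is part of what the lemma is meant to establish, so invoking it here is circular---and your bound on $T_n$ rests entirely on the existence of a finite $T_q$ with $A_q>0$.

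The approach can be salvaged: replace the claim that $q$ is periodic by the directly provable fact that $\int_0^\infty\bigl(E-V(q(t))\bigr)\,\mathrm dt=+\infty$. Indeed, if this integral were finite then, since $\rho(t):=E-V(q(t))$ has bounded derivative, one would have $\rho(t)\to 0$, hence $\dot q(t)\to 0$ and $q(t)$ accumulating on $V^{-1}(E)$; but then $\ddot\rho=-H^V(q)(\dot q,\dot q)+\widehat g\bigl(\nabla V(q),\nabla V(q)\bigr)$ is eventually bounded below by a positive constant (regularity of $E$), forcing $\rho\to+\infty$, a contradiction. With this in hand your action bound does give $T_n$ bounded along a subsequence, and \emph{then} hypothesis~(b) yields $q(T^*)\in V^{-1}(E)$, establishing that $q$ is a brake orbit. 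The paper sidesteps the issue by bounding $t_n(1)$ directly from the behaviour of $\rho_n=E-V(q_n)$, with no reference to the limit $q$: it decomposes $[0,t_n(1)]$ into intervals where $\rho_n$ is small (uniformly short, since $\ddot\rho_n\ge\nu_0>0$ there and $\dot\rho_n$ is bounded) and intervals where $\rho_n$ is bounded away from zero (uniformly few, by the $g_E$-length bound). Once patched, your route is arguably more transparent; the paper's is more self-contained and uniform in $n$.
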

\begin{proof}
First note that, by the strong concavity condition in  \eqref{eq:concavjacobi}, there exists $n_0$ and $\varepsilon_* > 0$ such that
\begin{equation}\label{eq:unifdistfromboundary}
\max\Big\{E-V\big(\gamma_n(s)\big): s \in [0,1]\Big\} \geq \varepsilon_*,\  \text{ for every }n \geq n_0,
\end{equation}
see 
Remark~\ref{rem:1.4bis}.
Set $c_n=\int_0^1g_E\big(\dot \gamma_n(s),\dot \gamma_n(s)\big)\,\mathrm ds$; by assumption (c), $c_n\le L^2$ for all $n$. 
Set also
\begin{equation}\label{eq:pr2}
t_n(s)=
\frac{1}{\sqrt{2}}\int_0^s\frac{\sqrt{c_n}}{E-V(\gamma_n(\tau))}\,\mathrm d\tau.
\end{equation}
Denote by  $\sigma_{n}(t)$ the inverse of $t_{n}$, and consider $q_n(t) = \gamma_n\big(\sigma_{n}(t)\big)$. Since $c_n > 0$, a straightforward computation shows that $q_n$ is a solution of the ODE:
\begin{equation}\label{eq:cauchy}
\Ddt\dot q_n(t) + \nabla V\big(q_n(t)\big)= 0,
\end{equation}
where $\Ddt$ and $\nabla $ denote the covariant derivative and  the gradient relatively to the metric $\widehat g$,
while 
\[\tfrac12\widehat g(\dot q_n,\dot q_n)+V\big(q_n(t)\big)=E,\quad \text{ and }\quad q_n(0)=\gamma_n(0) \longrightarrow P \in V^{-1}(E).\] 
Note that the $g_E$-length of $q_n$, denoted by $L(q_n)$, coincides with that of $\gamma_n$, and therefore, by assumption (c):
\begin{equation}\label{eq:lengthqn}
\phantom{,\qquad\forall n.} L(q_n)\le L,\qquad\forall n.
\end{equation}
In order to conclude the proof, it suffices to show that $t_n(1)$ is bounded. Indeed, if this is true, the proof is immediately concluded by passing to the limit in \eqref{eq:cauchy}, because, by \eqref{eq:unifdistfromboundary}, the limit curve $q$ must then be a (noncostant) brake orbit.

To prove that $t_n(1)$ is bounded, let us set $\rho_n(t)=E-V\big(q_n(t)\big)$; we have
\begin{equation}\label{eq:formrho'n}
\dot \rho_n(t)=-\widehat g\big(\nabla V(q_n(t)\big),\dot q_n(t)\big)
\end{equation}
and:
\begin{equation}\label{eq:ddotrhon}
\ddot \rho_n(t) = -H^V(q_n(t)\big(\dot q_n(t)],\dot q_n(t)\big)+\widehat g\big(\nabla V(q_n(t)), \nabla V(q_n(t))\big).
\end{equation}
where  $H^V$ is the  Hessian of $V$ with respect to the Riemann structure $\widehat g$. Note that, from \eqref{eq:formrho'n} and the equality $\frac12\widehat g(\dot q_n,\dot q_n)=E-V(q_n)$, we obtain that $\dot\rho_n$ is uniformly bounded.\smallskip

Now, set
\[
C=\inf\big\{-H^V(x)(v,v): x \in V^{-1}\big(\left]-\infty,E\right]\big),\ v \in T_x M,\  g(v,v)=1\big\}.
\]
Fix $\nu_0>0$ and $\varepsilon_0\in\left] 0,\varepsilon_*\right[$ such that 
\[
2C\big(E-V(x)\big)+\widehat g\big(\nabla V(x), \nabla V(x)\big) \ge \nu_0,\quad \text{for every $x$ such that $0\leq E-V(x) \leq \varepsilon_0$.}
\]
From \eqref{eq:ddotrhon}, we obtain:
\begin{multline*}
\ddot\rho_n(t)\ge C\cdot\widehat g(\dot q_n,\dot q_n)+\widehat g\big(\nabla V(q_n),\nabla V(q_n)\big)\\=2C\big(E-V(q_n)\big)+\widehat g\big(\nabla V(q_n),\nabla V(q_n)\big),
\end{multline*}
and therefore:
\begin{equation}\label{eq:dersecpos}
 \ddot\rho_n(t)\ge\nu_0 > 0,\quad \text{when}\ q_n(t)\  
 \text{belongs to set}\ \mathcal A_{\varepsilon_0}:=
 \big\{x:E-V(x)\leq \varepsilon_0\big\}.
 \end{equation}
Recalling that $\dot\rho_n$ is uniformly bounded, such lower bound on $\ddot\rho_n$ implies that 
\begin{equation}\label{eq:boundlengtintervals}
\begin{aligned}
\text{ there exists a uniform} & \text{ lower bound on the length}\\
\text{  of any interval of time $t$} &\text{ for which $q_n(t)$ belongs to $\mathcal A_{\varepsilon_0}$}. 
\end{aligned}  
\end{equation}

Let us call \emph{of type 1} every interval $[a,b]\subset\big[0,t_n(1)\big]$ such that $q_n\big([a,b]\big)\subset\mathcal A_{\varepsilon_0}$ (this notion clearly depends on $n$). Similarly, let us call \emph{of type 2} every interval $[a,b]\subset\big[0,t_n(1)\big]$ such that $q_n\big([a,b]\big)\subset\mathcal B_{\frac12\varepsilon_0}:=\big\{x:E-V(x)\geq \frac12\varepsilon_0\big\}$, which is maximal with respect to this property. Clearly, $\big[0,t_n(1)\big]$ can be written (not uniquely!) as union of intervals that are either of type 1 or of type 2. 

From \eqref{eq:dersecpos} it follows that, if $[a,b]$ is an interval of type $2$, the maximum on $[a,b]$ of the map $s\mapsto E-V\big(q_n(s)\big)$ is attained at some instant $\bar s$ such that $q_n(\bar s)\not\in\mathcal A_{\varepsilon_0}$. Thus, an interval of type $2$ cannot be also of type $1$. It also follows that, given an interval $[a,b]$ of type $2$, the $g_E$-length of
$q_n\big\vert_{[a,b]}$ is at least twice the \emph{Jacobi} distance between the level hypersurfaces $\Sigma_{\varepsilon_0}$ and $\Sigma_{\frac12\varepsilon_0}$, where $\Sigma_a=\big\{x:E-V(x)=a\big\}$.

This shows that there exists a uniform upper bound on the number of distinct intervals of type $2$. For each $n$, the complement in $\big[0,t_n(1)\big]$ of the union of all intervals of type 2 must then consist of a (uniformly bounded) finite number of intervals, which are necessarily of type 1, and therefore 
by \eqref{eq:boundlengtintervals}
they have uniformly bounded length. In conclusion, $t_n(1)$ is bounded.
\end{proof}
The result of Lemma~\ref{lem:jconv} will be employed in Proposition~\ref{prop:SGL} to establish that, if the number of brake orbits is finite, then the Jacobi metric satisfies assumption \eqref{eq:nonsat} of Theorem \ref{thm:generale}.
\section{The functional framework}\label{sub:varframe}
\subsection{Hilbert structure and distance function}\label{sub:hilbstruct}

Let $(M,g)$ be a Riemannian manifold ($M$ of class $C^3$ and $g$ of class $C^2$), and let $\Omega\subset M$ be an open subset of $M$ whose boundary $\partial \Omega$ is a hypersurface of class $C^2$.  
For any $[a,b] \subset [0,1]$, $H^1\big([a,b],M\big)$ will denote the Sobolev
space of all absolutely continuous curves $x\colon[a,b]\to M$
whose weak derivative is square integrable in any local chart of the manifold $M$.

For $S\subset M$  define:
\begin{equation*}
%\label{eq:defH1abA}
H^{1}\big([a,b],S\big)=\big\{x\in H^{1}\big([a,b],M\big):x(s)\in S
\text{ for all }s\in [a,b]\big\},
\end{equation*}

It will be useful to have a background linear structure, and for this we appeal to the classical Whitney Embedding Theorem (\cite{wh}). Thus, we will assume that
$M$ is embedded\footnote{%
 Among other things, considering $M$ embedded in $\mathds R^{m}$ will give us a notion of  \emph{weak} $H^1$-convergence of sequences of curves in $M$.} in $\mathds{R}^m$, with $m=2N$. 
Once such an embedding has been chosen, we can define a distance $\dist_*$ on $H^{1}\big([0,1],M\big)$ setting:   
 \begin{multline}\label{eq:distanza}
\dist_{*}(x_2,x_1)=\Big(\int_0^1\big \Vert \dot x_2(s)- \dot x_1(s) \big\Vert_m^2 ds\Big)^{\frac12}\\+ \max\big\{\big\Vert x_2(0)-x_1(0) \big\Vert_m,\  \big\Vert x_2(1)-x_1(1)\big\Vert_m\big\}.
\end{multline}
where $\Vert \cdot \Vert_m$ is the Euclidean norm in $\mathds R^m$.
Moreover, in $T_{x}H^{1}\big([0,1],M\big)$  we consider the norm 
\begin{equation}\label{eq:normaV}
\Vert V \Vert_{*} = \max\big\{\Vert V(0)\Vert_m,\ \Vert V(1)\Vert_m\big\}+
\Big(\int_0^1 \big\Vert V'  \big\Vert_m^2 ds\Big)^{\frac12},
\end{equation}
where $V'$ is the usual derivative of $V$ as a map from $[0,1]$ to $\mathds{R}^m$.

Using the exponential map in Remark~\ref{rem:totgeo}, one proves that $H^{1}\big([a,b],M\big)$ is an infinite dimensional Hilbert  manifold of class $C^2$ (more precisely, a $C^2$--submanifold of $H^1\big([a,b],\mathds{R}^m\big)$), modeled on the Hilbert space $H^{1}\big([a,b],\mathds{R}^N\big)$. For $x\in H^1\big([a,b],M\big)$, the tangent space $T_xH^1\big([0,1],M\big)$ is given by
\[
T_{x}H^{1}\big([a,b],M\big)=\big\{\xi \in H^{1}\big([a,b],TM\big): \xi(s) \in T_{x(s)}M \text{ for all }s \in [a,b]\big\},
\]
where $TM$ denotes the tangent bundle of $M$. 

\subsection{The admissible paths and the energy functional}
 Let us consider the following set of paths:
\begin{equation}\label{eq:defM}
\mathfrak M=\Big\{x\in
H^{1}\big([0,1],\overline\Omega \big):
x(0)\in \partial \Omega, x(1) \in \partial\Omega \Big\}.
\end{equation}
We will use the \emph{geodesic action functional}
$\mathcal F$ on $\mathfrak M$, defined by:
\begin{equation}\label{eq:enfunct}
\mathcal F(x)=\int_0^1g(\dot x, \dot x)\,\mathrm ds.
\end{equation}
The differential $\mathrm d\mathcal F$ in $H^1([0,1],M)$ is easily computed as:
\begin{equation}\label{eq:dermathcalF}
\mathrm d\mathcal F(x)[V]=2\int_0^1g\big(\dot x,\Dds V\big)\,\mathrm ds,
\end{equation}
for all $x\in\mathfrak M$ and all $V\in T_xH^1([0,1],M)$.

Define the  constant:
\begin{equation}\label{eq:ko}
K_0=\sup_{x \in \overline\Omega}\Vert \nabla \phi(x)\Vert.
\end{equation}

The following result will be systematically used in the rest of the paper:
\begin{lem}\label{thm:lem2.3}
Let $x\in\mathfrak M$ and let $[a,b]\subset[0,1]$. Then
\begin{equation}\label{eq:aggiuntalemma2.1}
\big|\phi(x(b))-\phi(x(a))\big|\leq  K_0\left((b-a) \int_a^bg(\dot x,\dot x) \,
\mathrm d\sigma\right)^{\frac12}.
\end{equation}
\end{lem}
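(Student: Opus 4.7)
My plan is to prove the estimate by writing $\phi(x(b)) - \phi(x(a))$ as the integral of the derivative of $s\mapsto \phi(x(s))$ and then applying Cauchy--Schwarz twice: once pointwise for the Riemannian inner product, and once for the $L^2$ inner product on $[a,b]$.

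More concretely, the first step is to observe that since $\phi$ is $C^2$ on $\overline\Omega$ and $x\in H^1\big([0,1],\overline\Omega\big)$, the composition $s\mapsto \phi\big(x(s)\big)$ is absolutely continuous on $[0,1]$, with weak derivative
\[
\frac{\mathrm d}{\mathrm ds}\phi\big(x(s)\big) = g\big(\nabla\phi(x(s)),\dot x(s)\big)
\]
for almost every $s$. By the fundamental theorem of calculus,
\[
\phi\big(x(b)\big)-\phi\big(x(a)\big) = \int_a^b g\big(\nabla\phi(x(s)),\dot x(s)\big)\,\mathrm ds.
\]

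The second step is to apply the pointwise Cauchy--Schwarz inequality for the metric $g$, which gives $\big|g(\nabla\phi(x(s)),\dot x(s))\big| \leq \Vert\nabla\phi(x(s))\Vert\cdot\Vert\dot x(s)\Vert$. By the definition of $K_0$ in \eqref{eq:ko}, $\Vert\nabla\phi(x(s))\Vert \leq K_0$ uniformly on $\overline\Omega$, so
\[
\big|\phi(x(b))-\phi(x(a))\big|\le K_0\int_a^b \sqrt{g(\dot x,\dot x)}\,\mathrm ds.
\]

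The final step is the integral Cauchy--Schwarz inequality applied to the product $1\cdot\sqrt{g(\dot x,\dot x)}$ on $[a,b]$:
\[
\int_a^b \sqrt{g(\dot x,\dot x)}\,\mathrm ds \leq \left((b-a)\int_a^b g(\dot x,\dot x)\,\mathrm d\sigma\right)^{1/2},
\]
which, combined with the previous inequality, yields \eqref{eq:aggiuntalemma2.1}. I do not anticipate any genuine obstacle: the regularity of $x$ and $\phi$ legitimizes each manipulation, and both applications of Cauchy--Schwarz are standard.
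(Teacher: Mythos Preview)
Your proof is correct and follows essentially the same approach as the paper's: write the difference as the integral of $g\big(\nabla\phi(x),\dot x\big)$, bound pointwise by $K_0\sqrt{g(\dot x,\dot x)}$, and then apply the Cauchy--Schwarz inequality on $[a,b]$. The paper states the intermediate estimate for arbitrary $s\in[a,b]$ rather than just $s=b$, but this is a trivial strengthening of what you wrote.
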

\begin{proof}
Since   $\Vert \nabla \phi(x) \Vert \leq K_0$ for any $x \in \overline\Omega$, for any $s\in [a,b]$ we have 
\begin{multline*}
\big\vert\phi\big(x(s)\big)-\phi\big(x(a)\big)\big\vert\le\int_a^{s}\big\vert g\big(\nabla\phi\big(x(\sigma)\big),\dot x(\sigma)\big)\big\vert\,
\mathrm d\sigma \le \\
\le K_0\int_a^sg(\dot x,\dot x)^{\frac12}\,  \mathrm
d\sigma
\le K_0\sqrt{s-a} \left(\int_a^sg(\dot x,\dot x) \,\mathrm
d\sigma\right)^{\frac12},
\end{multline*}
from which inequality  \eqref{eq:aggiuntalemma2.1} follows.
\end{proof}

\begin{cor}\label{smallstrip} Let $\delta > 0$. 
Let $x \in \mathfrak M$ be such that $\phi\big(x(a)\big)=0$ for some $a\in\left[0,1\right[$, and assume that for some $b\in\left]a,1\right]$:
\[
\int_a^bg(\dot x, \dot x)\,\mathrm d\sigma \leq \frac{\delta^2}{K_0^2}.
\] 
Then $\phi\big(x(s)\big)\geq -\delta$ for all $s \in [a,b]$.\qed
\end{cor}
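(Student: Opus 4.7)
The corollary is a direct consequence of Lemma~\ref{thm:lem2.3}, applied on the interval $[a,s]$ for an arbitrary $s\in[a,b]$, with the hypothesis $\phi\big(x(a)\big)=0$ used to eliminate the initial term. No further geometric input is needed; in particular, no reference to strong concavity or to the structure of $\Omega$ beyond the bound $\|\nabla\phi\|\le K_0$ baked into Lemma~\ref{thm:lem2.3}.

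Concretely, I would fix $s\in[a,b]$ and apply the inequality \eqref{eq:aggiuntalemma2.1} to the subinterval $[a,s]$, obtaining
\[
\big|\phi(x(s))-\phi(x(a))\big|\le K_0\Big((s-a)\int_a^s g(\dot x,\dot x)\,\mathrm d\sigma\Big)^{1/2}.
\]
Using $\phi(x(a))=0$, the left-hand side is $|\phi(x(s))|$, so in particular $\phi(x(s))\ge -K_0\big((s-a)\int_a^s g(\dot x,\dot x)\,\mathrm d\sigma\big)^{1/2}$. Bounding the integral by the hypothesis $\int_a^b g(\dot x,\dot x)\,\mathrm d\sigma\le\delta^2/K_0^2$, and using the trivial inequality $s-a\le 1$ (which follows from $a,s\in[0,1]$), yields
\[
\phi(x(s))\ge -K_0\sqrt{(s-a)\cdot\tfrac{\delta^2}{K_0^2}}=-\delta\sqrt{s-a}\ge -\delta.
\]
This gives the desired pointwise lower bound for every $s\in[a,b]$.

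There is no genuine obstacle here: the proof is a one-line application of Lemma~\ref{thm:lem2.3}. The only minor point worth flagging in the write-up is the use of $s-a\le 1$, which is why the constant $K_0^2$ appears in the denominator of the energy bound without any additional length-of-interval factor.
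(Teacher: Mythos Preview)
Your argument is correct and is exactly the intended derivation: the paper treats this corollary as immediate from Lemma~\ref{thm:lem2.3} (note the bare \qed), and your application of \eqref{eq:aggiuntalemma2.1} on $[a,s]$ together with $s-a\le 1$ is precisely how one unpacks that implication.
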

\subsection{\texorpdfstring{$\mathbf{\mathds Z_2}$}{Z2}-equivariance}
Consider the diffeomorphism $\Rcal\colon\mathfrak M \to \mathfrak M$:
\begin{equation}\label{eq:2.4}
\Rcal x(t)=x(1-t).
\end{equation}
We say that $\Ncal\subset \mathfrak M$ is
\emph{$\Rcal$--invariant} if $\Rcal(\Ncal)=\Ncal$; note that
$\mathfrak M$ is $\Rcal$-invariant.

\begin{rem}\label{rem:noROGC}
Note that if $\gamma\colon [0,1]\to \overline\Omega$ is a $C^1$--curve such that $\dot \gamma(s) \not=0$ for any $s \in [0,1]$, then\footnote{%}
$\gamma\ne\mathcal R\gamma$ as a point of $\mathfrak M$. On the other hand, if $\gamma$ is an OCG, then $\gamma$ and $\mathcal R\gamma$ are \emph{not} geometrically distinct as OGCs.} $R\gamma\not= \gamma$.  Indeed if by contradiction $R\gamma = \gamma$, then $\gamma(1-t)=\gamma(t)$ for any $t$, from which we deduce $\dot \gamma(\frac12)=0$.
\end{rem}

The following Lemma will be used to define an $\Rcal$--invariant subset $\mathfrak C$ of ${\mathfrak M}$
which carries the main topological properties of ${\mathfrak M}$.
 
\begin{lem}\label{thm:corde} There exists  a continuous map $\gamma\colon\partial\Omega\times\partial\Omega\to H^1\big([0,1],\overline\Omega\big)$ such that
\begin{enumerate}
\item\label{corde1} $\gamma_{A,B}(0)=A,\,\,\gamma(A,B)(1)=B$.
\item\label{corde3} $\gamma_{A,A}(t)=A$, for all $t\in [0,1]$.
\item\label{corde4} $\Rcal \gamma_{A,B}=\gamma_{B,A}$, namely $\gamma_{A,B}(1-t)=\gamma_{B,A}(t)$ for all $t$, and for all $A,B$.
\end{enumerate}
\end{lem}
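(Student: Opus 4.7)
The plan is to pull back the elementary chord construction from the standard Euclidean $N$-ball via a homeomorphism $\Phi\colon\overline\Omega\to\overline B^N$. Since $\overline\Omega$ is by assumption homeomorphic to the standard closed $N$-disk, such a $\Phi$ exists. With additional work---essentially using the smooth structure of $\overline\Omega$ to set up ``polar coordinates'' around a chosen interior point $p_0\in\Omega$---one can take $\Phi$ to be a diffeomorphism on $\overline\Omega\setminus\{p_0\}$ and bi-Lipschitz globally, with $\Phi(p_0)=0$. Concretely, I would choose a smooth outward-pointing vector field $X$ on $\overline\Omega$ that vanishes exactly at $p_0$ and is transverse to $\partial\Omega$; its integral curves from $p_0$ to $\partial\Omega$ play the role of ``radii'' and identify $\overline\Omega$ smoothly (off $p_0$) with the cone $\big(\partial\Omega\times[0,1]\big)\big/\big(\partial\Omega\times\{0\}\big)$, and one then composes with the analogous polar model of $\overline B^N$.

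Having fixed such $\Phi$, I would define
\[
\gamma_{A,B}(t):=\Phi^{-1}\bigl((1-t)\Phi(A)+t\Phi(B)\bigr),\qquad t\in[0,1].
\]
Properties \eqref{corde1}, \eqref{corde3} and \eqref{corde4} are then immediate, because the corresponding facts hold for the Euclidean linear interpolation $c_{A,B}(t):=(1-t)\Phi(A)+t\Phi(B)$ in $\overline B^N$: the endpoints are correct, $c_{A,A}$ is the constant path $\Phi(A)$, and $c_{A,B}(1-t)=c_{B,A}(t)$.

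The remaining point, and the real work, is verifying that $\gamma_{A,B}\in H^1\big([0,1],\overline\Omega\big)$ and that the assignment $(A,B)\mapsto\gamma_{A,B}$ is continuous into $H^1$. Away from the ``antipodal'' locus where $\Phi(B)=-\Phi(A)$, the chord $c_{A,B}$ avoids the origin and $\gamma_{A,B}$ is smooth in $t$, with smooth dependence on $(A,B)$; a global bi-Lipschitz bound on $\Phi^{-1}$ controls the portion of the chord that passes close to $0$, yielding a uniform Lipschitz bound on $\gamma_{A,B}$ and hence an $H^1$-bound uniform in $(A,B)$. Continuity in the $H^1$-topology then follows from a standard dominated-convergence argument combined with the smoothness of $(A,B)\mapsto c_{A,B}$.

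The main obstacle is constructing $\Phi$ with sufficient regularity: $\overline\Omega$ need only be \emph{homeomorphic} (not \emph{a priori} diffeomorphic) to the standard disk, and in high dimensions one cannot exclude exotic smooth structures. The approach above bypasses this by never identifying $\overline\Omega$ diffeomorphically with the standard disk: it only uses the intrinsic smooth structure of $\overline\Omega$, together with the cone construction and a bi-Lipschitz (not necessarily smooth) identification $\partial\Omega\cong S^{N-1}$, which exists for any compact smooth hypersurface homeomorphic to a sphere by appealing to a finite smooth triangulation and extending radially.
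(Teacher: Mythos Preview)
Your basic formula $\gamma_{A,B}(t)=\Phi^{-1}\bigl((1-t)\Phi(A)+t\Phi(B)\bigr)$ is exactly the paper's starting point. The divergence is in how the $H^1$-regularity is obtained when $\overline\Omega$ is only \emph{homeomorphic} to the disk.

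The paper does not attempt to improve the regularity of the homeomorphism. It takes the continuous curves $\gamma_{A,B}$ produced by an arbitrary homeomorphism $\Psi$ and replaces each by a broken geodesic interpolating the nodes $\gamma_{A,B}(j/m)$, $j=0,\ldots,m$, for a single $m$ chosen large enough (uniform in $(A,B)$ by compactness and equicontinuity). This yields piecewise-smooth curves depending continuously on $(A,B)$ in $H^1$, and the symmetry $\mathcal R\gamma_{A,B}=\gamma_{B,A}$ survives provided the partition of $[0,1]$ is symmetric about $1/2$. This is elementary and dimension-independent.

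Your route has a genuine gap. You assert a bi-Lipschitz identification $\partial\Omega\cong S^{N-1}$ ``by appealing to a finite smooth triangulation and extending radially''. A smooth triangulation gives a bi-Lipschitz map from $\partial\Omega$ to the realization $|K|$ of a simplicial complex, but to reach $S^{N-1}$ bi-Lipschitzly you then need $|K|$ to be \emph{PL}-homeomorphic to the standard PL sphere; a mere topological homeomorphism between polyhedra is not bi-Lipschitz in general. For $\dim\partial\Omega\neq 4$ this can be salvaged via the PL Poincar\'e theorem or Sullivan's uniqueness of Lipschitz structures, but these are deep results you did not invoke; for $\dim\partial\Omega=4$ (i.e.\ $N=5$) the claim is essentially entangled with the smooth four-dimensional Poincar\'e conjecture. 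There is also an internal inconsistency: if the boundary identification $\partial\Omega\to S^{N-1}$ is only bi-Lipschitz, then the cone map $\Phi$ is \emph{not} a diffeomorphism on $\overline\Omega\setminus\{p_0\}$ as you claim, and your argument for $H^1$-continuity---which relies on smoothness off the antipodal locus---collapses, since $D\Phi^{-1}$ then exists only almost everywhere and need not vary continuously along a family of chords. The paper's broken-geodesic device sidesteps all of this.
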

\begin{proof}
Let $\Psi\colon \overline \Omega  \rightarrow {\mathds D}^N$ be an  homeomorphism, where ${\mathds D}^N$ is the unit disk in $\mathds{R}^N$. Define
\[
\phantom{,\quad A,B\in \overline\Omega.}\gamma_{A,B}(t) = \Psi^{-1}\big((1-t)\Psi(A)+t\Psi(B)\big),\quad A,B\in \overline\Omega.
\]
If $\Psi$ is of class $C^1$, the above formula gives the desired map $\gamma$.
In general, if $\overline\Omega$ is only homeomorphic (and not diffeomorphic) to the disk $\mathds D^N$,  and the map $\Psi$ is only continuous, the above definition produces curves may not have $H^1$-regularity. 
However, starting from the above map $\gamma$ it is not difficult to obtain a map  taking values in $H^1\big([0,1],\mathds{R}^m\big)$ 
and satisfying \eqref{corde1}--\eqref{corde4} using 
a broken geodesic approximation argument.
\end{proof}

We will later choose a map $\gamma$ as in Lemma~\ref{thm:corde} satisfying an additional property, see Remark~\ref{thm:remgammaM0}.\smallskip

Denote by $\mathfrak C$ the image of the map $\gamma$ as above and by $\mathfrak C_0\subset\mathfrak C$ the image of the diagonal of $\partial\Omega\times\partial\Omega$:
\begin{equation} \label{eq:CC0}
\begin{aligned}
&{\mathfrak C}=\Big\{\gamma_{A,B}\,:\,A,B\in\partial\Omega\Big\}\\
&
{\mathfrak C_0}=\Big\{\gamma_{A,A}:A\in\partial\Omega \Big\}.
\end{aligned}
\end{equation}
Note that $\mathfrak C_0$ is the set of the constant curves in $\mathfrak M$.

\subsection{On the geometric technical assumption}
This section is devoted to the definition and a discussion on the technical geometric assumption mentioned in the introduction.

Recalling the definition of the functional $\mathcal F$ in \eqref{eq:enfunct}, we set:
\begin{equation}\label{eq:topconst1}
L^{(2)}(\gamma)=\sup\big\{ \mathcal F\big(\gamma(A,B)\big): A,B\in \partial \Omega,\; \gamma \text{ satisfies  \eqref{corde1}--\eqref{corde4} of Lemma \ref{thm:corde}}\big\},
\end{equation}
and 
\begin{equation}\label{eq:topconst2}
L^{(2)}(\Omega)=\inf\big\{L(\gamma)^2:\gamma \text{ satisfies  \eqref{corde1}--\eqref{corde4} of Lemma \ref{thm:corde}}\big\}.
\end{equation}
\begin{rem}\label{rem:stimaMOmega} Note that 
\[
L^{(2)}(\Omega)\geq \frac{\delta_0^2}{K_0^2}.
\]
Indeed, if  $L^{(2)}(\Omega) < {\delta_0^2}/{K_0^2}$, then there would exist a map $\gamma$ as in Lemma~\ref{thm:corde} such that $L^2(\gamma) < {\delta_0^2}/{K_0^2}$. In this case, by  Corollary~\ref{smallstrip}, the set $\mathfrak C$ could be deformed continuously onto a set consisting of curves with image in $\partial \Omega$, which is not possible.
\end{rem} 
 
Let us introduce a suitable class of geodesics in $\overline\Omega$ with endpoints in $\partial\Omega$.
For the purpose of giving a short definition of the set $\mathcal G_L$ below \eqref{eq:GL}, let us temporarily call \emph{special} a geodesic $x\colon[0,1]\to\overline\Omega$ with $x\big(\left]0,1\right[\big)\subset \Omega$ and  $x(0),x(1)\in\partial\Omega$ if it satisfies one of the two boundary conditions below:
\begin{itemize}
\item[(a)]  $x$ is tangent to $\partial\Omega$ at both endpoints;
\item[(b)]  $x$ is tangent to $\partial\Omega$ at one endpoint, and orthogonal to $\partial\Omega$ at the other endpoint.
\end{itemize}
 For $L > 0$ fixed, we set
\begin{equation}\label{eq:GL}
\mathcal G_L=\Big\{x \colon[0,1]\rightarrow \overline \Omega: x \text{ is a special geodesic with } 0 <\mathcal F(x) \leq L^2\Big\}.
\end{equation}

The main result of the paper is the following:
 \begin{teo}\label{thm:generale}
Let $\Omega$ be an open subset of $M$ with boundary $\partial \Omega$ of class $C^2$. 
Suppose that  $\overline\Omega$ is strongly concave and homeomorphic to an $N$--dimensional disk. 

Assume also that 
\begin{equation}\label{eq:nonsat}
\begin{aligned}
\text{there exists }M_0^2 > L^{(2)}(\Omega) &\text{ and }x_0 \in \Omega \text{ such that }\\
x_0 \not\in \gamma\big([0,1]\big) \text{ for all }&\gamma \in \mathcal G_{M_0^2}.
\end{aligned}
\end{equation}
Then, there are at least $N$ geometrically distinct\footnote{%
Two orthogonal geodesic chords $\gamma_1,\gamma_2\colon[0,1]\to\overline\Omega$ are geometrically distinct if $\gamma_1\big([0,1]\big)\ne\gamma_2\big([0,1]\big)$.}
orthogonal geodesic chords in $\overline\Omega$.
\end{teo}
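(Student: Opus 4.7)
The plan is to derive the theorem from a minimax argument applied to the energy functional $\mathcal F$ on the Hilbert manifold $\mathfrak M$, with topological invariant given by the Lusternik--Schnirelman category of subsets of the quotient $\widetilde{\mathfrak M}=\mathfrak M/\mathcal R$ relative to (the image of) $\mathfrak C_0$. The classical obstruction is that the gradient flow of $\mathcal F$ on $\mathfrak M$ is not future complete: the lack of convexity lets trajectories reach configurations tangent to $\partial\Omega$ where the gradient field is ill-defined. The core technical task is therefore to construct an $\mathcal R$-equivariant pseudo-gradient flow $\eta_*$ on the sublevel $\mathcal F^{-1}\bigl([0,M_0^2]\bigr)$ that is future complete, strictly decreases $\mathcal F$ off its fixed points, satisfies a Palais--Smale condition, and whose fixed points lying outside a distinguished set $\Lambda_*$ are genuine orthogonal geodesic chords.

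I would assemble the pseudo-gradient from local building blocks using two cones of infinitesimal admissible variations. For $x\in\mathfrak M$, the cone $\mathcal V^-(x)\subset T_x\mathfrak M$ consists of variations $V$ satisfying $g\bigl(V(s),\nabla\phi(x(s))\bigr)\ge 0$ wherever $x(s)\in\partial\Omega$, so that flowing along $-\mathcal V^-$ produces curves remaining in $\overline\Omega$; the smaller cone $\mathcal V^+(x)$ additionally forces the variation to push away from the strip $\phi^{-1}\bigl([-\delta_1,0]\bigr)$. Using partitions of unity in the spirit of \cite{PalaisLS}, the local $\mathcal V^+$- and $\mathcal V^-$-pseudo-gradients are glued globally, with the $\mathcal V^+$ type used on $\Lambda_*$, the $\mathcal V^-$ type used away from $\Lambda_*$, and convex combinations interpolating across the entrance set $\Gamma_*$. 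Any fixed point of $\eta_*$ outside $\Lambda_*$ is $\mathcal V^-$-critical, and under strong concavity together with \eqref{eq:1.1bis} such a fixed point is forced to be a true OGC rather than a general geodesic with obstacle, since the $\mathcal V^+$ mechanism removes any curve with nonempty contact set with $\partial\Omega$ into $\Lambda_*$.

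Next I would construct $\Lambda_*$ with the four bulleted properties: it contains every WOGC and every geodesic with obstacle orthogonal to $\partial\Omega$ at both endpoints in its interior, contains no true OGC, is $\eta_*$-invariant, and admits a continuous retraction onto a set of curves in $\partial\Omega$ that fixes $\mathfrak C_0$. The separation between true OGCs and geodesics-with-obstacle uses the uniform positive distance from $\partial\Omega$ of the interior of an OGC (a consequence of the non-tangency boundary condition and of Remark~\ref{rem:concavconseg}). The topological triviality of $\Lambda_*$ is where the hypothesis \eqref{eq:nonsat} is indispensable: since no element of $\mathcal G_{M_0^2}$ enters $B_{\rho_0}(x_0)$, the homotopy that collapses $\Lambda_*$ onto $\partial\Omega$ can be constructed to avoid producing a true OGC at any intermediate time, by using $x_0$ as a ``free pivot'' through which any would-be special geodesic is excluded. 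I expect this step, namely the simultaneous design of $\Lambda_*$ and of the retraction compatible with the pseudo-gradient dynamics, to be the main obstacle.

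Finally I would run the minimax argument. For each $k$, set
\[
c_k=\inf_{A\in\mathfrak A_k}\ \sup_{[x]\in A}\mathcal F(x),
\]
where $\mathfrak A_k$ consists of compact subsets of $\widetilde{\mathfrak M}$ with relative Lusternik--Schnirelman category at least $k$ with respect to $\mathfrak C_0/\mathcal R$. The bound $M_0^2>L^{(2)}(\Omega)$, together with the continuous map $\gamma$ of Lemma~\ref{thm:corde}, guarantees that $\mathfrak A_k$ is nonempty and that $c_k\le M_0^2$; the deformation lemma associated to $\eta_*$ then identifies each $c_k$ as a critical level, and coincidences $c_k=c_{k+1}$ force the corresponding critical set to have infinite relative category. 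Because $\Lambda_*$ is retractible to a set of curves on $\partial\Omega$ fixing $\mathfrak C_0$, it contributes $0$ to the relative category, so fixed points inside $\Lambda_*$ are discarded and the count detects at least $N$ distinct OGCs in $\widetilde{\mathfrak M}$, which by $\mathcal R$-equivariance correspond to $N$ geometrically distinct OGCs in $\overline\Omega$. The value $N$ arises from the computation of the relative Lusternik--Schnirelman category of $\bigl(\mathbb S^{N-1}\times\mathbb S^{N-1}\bigr)/\mathcal R$ with respect to the diagonal, carried out in \cite[Appendix~A]{esistenza}.
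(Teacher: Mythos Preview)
Your overall strategy coincides with the paper's: build an $\mathcal R$-equivariant pseudo-gradient flow $\eta_*$ from the cones $\mathcal V^-$ and $\mathcal V^+$, isolate the bad $\mathcal V^-$-critical curves (WOGCs and geodesics with obstacle) inside an invariant, topologically trivial set $\Lambda_*$, and run a relative Lusternik--Schnirelman argument. The role you assign to hypothesis~\eqref{eq:nonsat} is also correct: it ensures that every curve in $\Lambda_*$ (and in low sublevels of $\mathcal F$) misses the ball $B_{\rho_0}(x_0)$, so a radial retraction of $\overline\Omega\setminus\{x_0\}$ onto $\partial\Omega$ provides the homotopy $H$.

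There are, however, several points where your write-up diverges from the paper in ways that matter. First, your sign in the definition of $\mathcal V^-(x)$ is reversed: since $\phi=-d(\cdot,\partial\Omega)$ near the boundary, $\nabla\phi$ is the \emph{outward} normal, so ``$V$ points inward'' is $g(\nabla\phi(x(s)),V(s))\le0$, not $\ge0$. Second, and more substantively, the paper does \emph{not} use your standard minimax values $c_k=\inf_{A\in\mathfrak A_k}\sup_A\mathcal F$. With that definition the deformation lemma only yields $\eta_*(1,\mathcal F^{c+\varepsilon})\subset\mathcal F^{c-\varepsilon}\cup\Lambda_*$, and since the flow may park curves in $\Lambda_*$ at energy above $c-\varepsilon$, the sup does not drop; your appeal to ``$\Lambda_*$ contributes $0$ to the category'' is the right intuition but is not by itself a proof. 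The paper circumvents this by building the exclusion of $\Lambda_*$ directly into the minimax: it takes $\Gamma_i$ to consist of closed $\mathcal R$-invariant subsets of $\mathfrak C$ (not of $\mathfrak M$) with $\cat_{\widetilde{\mathfrak C},\widetilde{\mathfrak C}_0}(\widetilde{\mathcal D})\ge i$, and sets
\[
c_i=\inf_{\mathcal D\in\Gamma_i,\ \tau\ge0}\ \sup\big\{\mathcal F(x):x\in\eta_*(\tau,\mathcal D)\setminus\Lambda_*\big\}.
\]
The ambient pair is $(\widetilde{\mathfrak C},\widetilde{\mathfrak C}_0)$, and getting back from $\mathfrak M$ to $\mathfrak C$ uses the endpoints map $\gamma\mapsto(\gamma(0),\gamma(1))$. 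Third, your claim that $c_k=c_{k+1}$ forces ``infinite relative category'' of the critical set is not correct (it forces category $\ge2$); the paper instead assumes finitely many OGCs and shows directly that $c_i<c_{i+1}$, using that the set $\mathcal A_\tau=\{x\in\mathcal D:\eta_*(\tau,x)\in\mathcal O_{r_*}\}$ is $\mathcal R$-equivariantly contractible in $\mathfrak C\setminus\mathfrak C_0$ to a two-point orbit (Proposition~\ref{teo:deformation6}), hence has relative category $\le1$.
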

\begin{rem}\label{thm:remgammaM0}
Note that a map $\gamma$ as in Lemma~\ref{thm:corde} can be chosen so that 
$\mathcal F\big(\gamma_{A,B}\big) < M_0^2$ for all $A,B \in \partial \Omega$, because $M_0^2>L^{(2)}(\Omega)$.
In the remainder of the paper, we will assume that $\gamma$ has been chosen so that this property is satisfied.
\end{rem}
The proof of Theorem~\ref{thm:generale} will occupy the rest of the paper, and will finalized in Section~\ref{sec:CVO}.
\subsection{From Theorem~\ref{thm:generale} to the proof of Seifert's conjecture}
Now let us see as Theorem~\ref{thm:generale} can be used to prove Seifert conjecture. For this, use the notation of 
Section~\ref{sub:BOOGC} and denote by $\Omega_\delta$ the interior of $M^E_\delta$, with $\delta \in \big]0,\smash{\widehat\delta}\big]$, and $\widehat\delta$ given in Proposition~\ref{thm:collective}. For any $\delta \in\left]0,\smash{\widehat\delta}\right]$ consider the corresponding $L^{(2)}(\Omega_\delta)$. It is not difficult to realize that
\begin{equation}\label{eq:Mbuca}
\sup\Big\{L^{(2)}(\Omega_\delta): \delta \in \big]0,\smash{\widehat\delta}\big]\Big\} < +\infty.
\end{equation}
Then from Lemma~\ref{lem:jconv}, we obtain:
\begin{prop}\label{prop:SGL} 
Suppose that the number of brake orbits is finite. Then, there exists $\delta \in\big]0,\widehat\delta\big]$ such that $\Omega_\delta$ satisfies \eqref{eq:nonsat}.
\end{prop}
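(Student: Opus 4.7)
The plan is to argue by contradiction, combining the uniform bound \eqref{eq:Mbuca}, the convergence result of Lemma~\ref{lem:jconv}, and an elementary dimension count on the set of brake orbits.

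First, I would use \eqref{eq:Mbuca} to fix once and for all a constant
\[
M_0^2 > \sup_{\delta\in\left]0,\widehat\delta\right]} L^{(2)}(\Omega_\delta),
\]
which is finite. Suppose, toward contradiction, that the conclusion fails: for every $\delta \in\left]0,\widehat\delta\right]$ and every $x_0 \in \Omega_\delta$ there is a special geodesic $\gamma \in \mathcal G_{M_0^2}$ in $\overline{\Omega_\delta}$ whose image contains $x_0$. Now fix any point $P$ in the interior $M^E$ of the potential well; since $P \in \Omega_\delta$ for all sufficiently small $\delta$, I can extract a sequence $\delta_n \to 0^+$ and special Jacobi geodesics $\gamma_n\colon[0,1]\to\overline{\Omega_{\delta_n}}$ with $P \in \gamma_n([0,1])$ and $\mathcal F(\gamma_n)\le M_0^2$.

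Next, I would verify that $(\gamma_n)$ satisfies the hypotheses of Lemma~\ref{lem:jconv}. Since $\gamma_n(0)\in\partial\Omega_{\delta_n}$, one has $\dist_E\bigl(\gamma_n(0)\bigr)=\delta_n \to 0$; by compactness of the regular level hypersurface $V^{-1}(E)$, we obtain, up to subsequence, $\gamma_n(0)\to P_0\in V^{-1}(E)$, which is hypothesis (a). Hypothesis (b) is immediate since both endpoints of $\gamma_n$ lie on $\partial\Omega_{\delta_n}=\dist_E^{-1}(\delta_n)$ while the interior lies in $\Omega_{\delta_n}$, where $\dist_E>\delta_n$. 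Hypothesis (c) is precisely our choice of $M_0^2$. Consequently Lemma~\ref{lem:jconv} yields a brake orbit $q$ emanating from $P_0$ with
\[
\sup_{s\in[0,1]}\widehat d\bigl(\gamma_n(s), q(\mathds{R})\bigr)\longrightarrow 0.
\]
Because $P\in\gamma_n([0,1])$ for every $n$ and $q(\mathds{R})$ is closed, the limit forces $P\in q(\mathds{R})$.

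The proof is then closed by a dimensional argument: the above would show that \emph{every} point of the $N$-dimensional open set $M^E$ lies on the image of one of the brake orbits of the system. But the standing assumption is that there are only finitely many brake orbits, and each has a $1$-dimensional image in the $N$-dimensional manifold $\widehat M$ (with $N\ge 2$); hence their union is nowhere dense in $M^E$, a contradiction. The only point that requires care in this scheme is ensuring that hypothesis (c) of Lemma~\ref{lem:jconv} holds uniformly in $n$, and this is exactly the content of the uniform bound \eqref{eq:Mbuca}; the remaining steps are a compactness extraction and a dimension count.
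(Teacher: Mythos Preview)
Your argument is correct and follows essentially the same route as the paper's proof: assume by contradiction that \eqref{eq:nonsat} fails along a sequence $\delta_n\to 0$, apply Lemma~\ref{lem:jconv} to the resulting special geodesics to produce a brake orbit through an arbitrary point of $M^E$, and conclude that finitely many one-dimensional trajectories cannot cover an $N$-dimensional set. You have simply fleshed out the details the paper leaves implicit, in particular the explicit choice of $M_0^2$ via \eqref{eq:Mbuca}, the verification of hypotheses (a)--(c) of Lemma~\ref{lem:jconv}, and the dimension count.
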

\begin{proof}
Let us choose a sequence $\delta_n>0$ with $\lim\limits_{n\to\infty}\delta_n=0$.
Lemma~\ref{lem:jconv} says that if $\gamma_n$ is a sequence of geodesics with endpoints on $\partial\Omega_{\delta_n}$, that remains uniformly away from the boundary and with bounded length, then $\gamma_n$  \emph{converges} to some brake orbit in $\Omega$. Thus, if for infinitely many $n$ the set $\Omega_{\delta_n}$ did not satisfy \eqref{eq:nonsat}, then through every point $x_0\in\Omega$ there would be an OGC. This is only possible if there were infinitely many OGCs in $\overline\Omega$.
\end{proof}
We are now ready to show how to obtain a proof of Seifert's conjecture from Theorem~\ref{thm:generale} and Proposition~\ref{prop:SGL}.
\begin{proof}[Proof of Seifert's Conjecture from Theorem~\ref{thm:generale} and Proposition \ref{prop:SGL}]\hfill\break
Let us assume that the number of brake orbits of energy $E$ is finite. From Proposition~\ref{prop:SGL}, there exists $\delta \in\big]0,\widehat\delta\big]$ such that $\Omega_\delta$ satisfies \eqref{eq:nonsat}.

From Proposition~\ref{thm:collective}, $\Omega_\delta$ is a Riemannian $N$-disk with strongly concave boundary. In addition, every OGC in $\Omega_\delta$ can be uniquely extended to a reparameterized brake orbit of energy $E$. Therefore, using Theorem~\ref{thm:generale} (applied to $M=M^E$, $g=g_E$ and $\Omega=\Omega_\delta$), we have at least $N$ geometrically distinct brake orbits of energy $E$, proving Seifert's Conjecture.
\end{proof}

\section{\texorpdfstring{$\mathcal V^-\!\!$}{V-}--critical curves and \texorpdfstring{$\mathcal V^-\!\!$}{V-}--Palais-Smale sequences}\label{sec:critPS}
Given $x\in\mathfrak M$ (cf. \eqref{eq:defM}), consider  vector fields $V\in T_xH^1\big([0,1],M\big)$ satisfying: 
\begin{equation}\label{eq:Vbordo}
g\big(\nabla \phi(x(0)),V(0)\big)=g\big(\nabla \phi(x(1)),V(1)\big)=0,
\end{equation}
and
\begin{equation}\label{eq:Venter}
g\big(\nabla \phi(x(s)),V(s)\big)\leq 0 \text{ for any } s\in\left]0,1\right[\ \text{such that}\ x(s)\in\partial\Omega.
\end{equation}
We also set
\begin{equation}\label{eq:V-sigma}
\mathcal V^-(x)=\Big\{V \in T_xH^{1}\big([0,1],M\big) \text{ satisfying \eqref{eq:Vbordo} and \eqref{eq:Venter}}\Big\}
\end{equation} 
Condition \eqref{eq:Venter} says that 
$V(s)$ does not point outside  $\Omega$ when $x(s)\in\partial\Omega$, see Figure~\ref{fig:1}. 
\begin{figure}
\centering
\includegraphics[scale=.7]{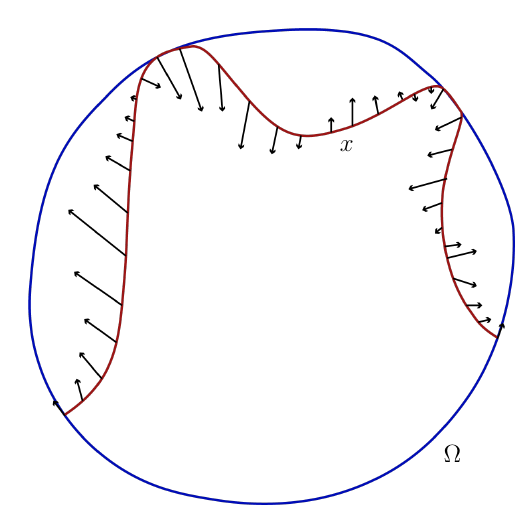}
\caption{A typical $\mathcal V^-$-field along a curve $x\in\mathfrak M$. The $\mathcal V^-$-critical points are geodesics with obstacle.}
\label{fig:1}
\end{figure}
\medskip
 
Taking inspiration from the \emph{weak slope theory}, see for instance \cite{CD,degmarz},  we then give the following:
\begin{defin}\label{def:criticalcurve}
We say that $x$ is a $\mathcal V^-$-critical curve for $\mathcal F$ on $\mathfrak M$ if
\begin{equation}\label{defcc1}
\phantom{,\quad \text{for all}\ V \in {\mathcal V}^-(x).}
\int_0^1g\big(\dot x,\Dds V\big)\,\mathrm ds\geq 0,\quad \text{for all } V \in \mathcal V^-(x).
\end{equation}
\end{defin}
Note that the set $\mathcal F^{-1}(0)$ consists entirely of minimum points in $\mathfrak M$ (the constant curves in $\partial \Omega$) which are obviously $\mathcal V^-$-critical curves. 
\subsection{Orthogonal geodesic chords with obstacle}\label{subs:OGCob}
Note that, by \eqref{eq:gradientnorm}, $\nabla \phi(p)$ is the unit exterior normal to $\overline \Omega$ at any $p \in \partial \Omega$.
In order to describe the $\mathcal V^-$-critical curves of $\mathcal F$ corresponding to positive critical levels, let us recall the following result from \cite{OT,MS}.

\begin{prop}\label{prop:descrizioneV^-}
Let $z$ be a $\mathcal V^-$--critical curve, then $\Dds\dot z$ is in $L^\infty$ (so $z$ is of class $C^1$), and $z$ is parameterized with constant speed. The portions of $z$ that lie in $\Omega$ are geodesics, while at almost all instants $s$ such that $z(s) \in \partial\Omega$, the second derivative $\Dds\dot z$ is orthogonal to $\partial \Omega$ and it points outside $\Omega$. More precisely, $z$ satisfies the equation:
\begin{equation}\label{eq:8.1bis}
\Dds\dot z= \underbrace{-H^{\phi}(z)(\dot z],\dot z)}_{\geq 0}
\nabla\phi\big(z(s)\big) \text{ for a.e. $s$ such that }z(s)\in\partial\Omega,
\end{equation} 
where $H^\phi$ is the Hessian of $\phi$ relatively to the metric $g$. Moreover    
\begin{equation}\label{lem:orth}
\dot z(0) \text{ and }\dot z(1) \text{ are orthogonal to }\partial \Omega.
\end{equation}
\end{prop}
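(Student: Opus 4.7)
The starting point is the variational inequality
\[
T(V):=\int_0^1 g(\dot z,\Dds V)\,\mathrm ds\ge 0\qquad\forall\,V\in\mathcal V^-(z),
\]
which follows from Definition~\ref{def:criticalcurve}. Let $C=\{s\in[0,1]:z(s)\in\partial\Omega\}$. The plan is to exploit the convex cone structure of $\mathcal V^-(z)$ by testing against an increasingly restrictive family of fields, extracting first an equality and then a one‑sided Lagrange‑multiplier inequality that ultimately pins down $\Dds\dot z$ pointwise a.e.

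First I would exploit two-sided admissibility. If $V$ has compact support inside an open interval $I\subset[0,1]\setminus C$, then the constraints \eqref{eq:Vbordo}--\eqref{eq:Venter} are vacuous on the support, so both $V$ and $-V$ lie in $\mathcal V^-(z)$; hence $T(V)=0$ and a standard DuBois–Reymond/bootstrap argument yields $\Dds\dot z=0$ on each component of $[0,1]\setminus C$, so $z$ is a smooth geodesic there. Next I would allow $V$ to reach $C$, but require $g(\nabla\phi(z(s)),V(s))=0$ at every $s\in C\cup\{0,1\}$: such $V$ and $-V$ are both in $\mathcal V^-(z)$, giving $T(V)=0$ for every such tangential variation. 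Comparing this with the inequality $T(V)\ge 0$ for a generic $V\in\mathcal V^-(z)$ (which can have strictly inward pointing components at $C$) shows that, as a distribution supported on $C\cup\{0,1\}$, $\Dds\dot z$ is non‑negatively parallel to $\nabla\phi(z(s))$, i.e., points outward from $\Omega$.

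To identify the multiplier as an $L^\infty$ function and obtain \eqref{eq:8.1bis}, I would then use the side condition $\phi\circ z\equiv 0$ on the portions of $C$ containing intervals, and differentiate twice. Pairing with $\nabla\phi(z)$ and using $\|\nabla\phi\|=1$ near $\partial\Omega$ (see \eqref{eq:gradientnorm}) yields
\[
g\big(\nabla\phi(z),\Dds\dot z\big)+\mathrm H^{\phi}(z)(\dot z,\dot z)=0,
\]
so the multiplier is $\lambda(s)=-\mathrm H^{\phi}(z)(\dot z,\dot z)$, which is bounded (by $H^1$ regularity of $z$ and $C^2$ regularity of $\phi$, $g$) and non‑negative because $\dot z(s)\in T_{z(s)}\partial\Omega$ on $\mathrm{int}(C)$ and $\mathrm H^\phi$ is negative definite on tangent directions by strong concavity (Definition~\ref{def:stronglyconcave} and Remark~\ref{rem:1.4}). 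This produces \eqref{eq:8.1bis} and shows $\Dds\dot z\in L^\infty$, hence $z\in C^1$.

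Finally, constant speed follows by testing with $V=f(s)\dot z(s)$, $f\in C^\infty_c(]0,1[)$: on $C$, $\dot z$ is tangent to $\partial\Omega$, so $g(\nabla\phi(z),V)=0$ and $\pm V\in\mathcal V^-(z)$; the identity $T(V)=0$ reduces (since $g(\dot z,\Dds\dot z)=0$ a.e.\ by \eqref{eq:8.1bis} and $\dot z\perp\nabla\phi(z)$ on $C$) to $\int_0^1 f'\,g(\dot z,\dot z)\,\mathrm ds=0$, so $g(\dot z,\dot z)$ is constant. For the endpoint orthogonality \eqref{lem:orth}, I would pick $V$ supported in a neighborhood of $s=0$ disjoint from $C\setminus\{0\}$, with $V(0)\in T_{z(0)}\partial\Omega$ arbitrary: both $\pm V$ are admissible, and integration by parts combined with $\Dds\dot z=0$ on the support gives $g(\dot z(0),V(0))=0$ for every $V(0)\in T_{z(0)}\partial\Omega$; the same argument applies at $s=1$. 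The main obstacle is the second step: carefully justifying the distributional duality that converts the one‑sided inequality on the convex cone $\mathcal V^-(z)$ into the pointwise direction/sign of $\Dds\dot z$ on $C$, especially at accumulation points of the contact set, where the computation $\phi\circ z=0$ cannot be differentiated classically; here I would invoke the uniform structural bounds on $C$ granted by strong concavity (as mentioned in Remark~\ref{rem:concavconseg}) and argue by approximation, referring to the corresponding treatment in \cite{MS,OT}.
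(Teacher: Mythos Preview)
The paper's own proof is nothing more than a citation to \cite{OT}, Proposition~3.2 and Lemmas~3.3--3.4, and your outline is exactly the standard strategy carried out there (and in \cite{MS}): exploit two-sided admissibility on the interior and on tangential variations to get equalities, use the one-sided cone to extract a signed multiplier, then differentiate the contact constraint $\phi\circ z\equiv 0$ to identify it. One small correction: your closing appeal to Remark~\ref{rem:concavconseg} is circular, since that remark already uses the present proposition (the interior arcs being geodesics) to control the contact set; drop that reference and rely solely on \cite{MS,OT} for the bootstrapping step, which is precisely what the paper does.
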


\begin{proof}
See \cite{OT}, Proposition 3.2 and Lemmas 3.3 and 3.4.
\end{proof}

\begin{rem} Note that, in the strongly concave case, if $z$ is a nonconstant $\mathcal V^-$-curve,  the second derivative of $z$ points stricly outside $\Omega$ at those points where $z$ touches $\partial \Omega$.
\end{rem}

\subsection{Geometry of the geodesics with obstacle}\label{sub:obstpbst}
In this subsection we will describe some properties of the set of geodesics with obstacle, needed for the proof of Theorem~\ref{thm:generale}. In particular, in Remark~\ref{lem:nox0} below we will clarify the role of assumption \eqref{eq:nonsat}.  

\begin{rem}\label{rem:tangenza}
If $x$ is a  $\mathcal V^-$--critical curve, since it is of class $C^1$, if $x(s) \in \partial\Omega$ then $\dot x(s) \in T_{x(s)}\partial\Omega$.
\end{rem}

\begin{rem}\label{rem:concavconseg}
By the strong concavity and Lemma \ref{thm:lem2.3}, if $x$ is a   $\mathcal V^-$--critical curve, then the \emph{contact set} 
\begin{equation}\label{eq:contatto}
C_x=\big\{s:[0,1]: x(s) \in \partial\Omega\big\}
\end{equation}
is given by a finite number of closed intervals (possibly consisting of isolated instants). 
Such a number is uniformly bounded  on the set of
$\mathcal V^-$-critical curves $x$ satisfying $\mathcal F(x) \leq M_0^2$. 
Namely, 
from Remark~\ref{rem:1.4bis}, every geodesic in $\Omega$ starting at $\partial\Omega$ must cross the hypersurface $\phi^{-1}(-\delta_0)$ before arriving on $\partial\Omega$ again. Moreover, from Lemma~\ref{thm:lem2.3}, if $x\in\mathfrak M$, with $\mathcal F(x)\le M_0^2$, and $[a,b]\subset[0,1]$ is such that $x(a)\in\phi^{-1}(0)$, $x(b)\in\phi^{-1}(-\delta_0)$, then $b-a\ge\delta_0^2/K_0^2M_0^2$.
\end{rem}

Now  set
\begin{equation}\label{eq:Z-sigma}
\begin{aligned}
Z^-=\Big\{x \in \mathfrak M:\ &   \ x \text{  is a $\mathcal V^{-}$--critical curve with }0 <\mathcal F(x) \leq M_0^2 \\
&\text{ and }\exists\ s\in\left]0,1\right[ \text{ such that }x(s) \in \partial \Omega\Big\}.
\end{aligned}
\end{equation}

\begin{rem} 
Elements of $Z^-$ are either orthogonal geodesic chords with obstacle, or WOGC's.  If $z$ is a $\mathcal V^-$--critical curve and $z \not\in Z^-$, then $z$ is an OGC.
\end{rem}

\begin{prop}\label{rem:Zsigma-compact}
The set $Z^-$ is compact.
\end{prop}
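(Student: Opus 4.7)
The plan is to establish sequential compactness: given a sequence $(x_n)\subset Z^-$, I would extract a subsequence converging in $\mathfrak M$ to some $x_\infty\in Z^-$. The starting point is a uniform $C^{1,1}$-estimate. By Proposition~\ref{prop:descrizioneV^-}, each $x_n$ is parameterized with constant speed $|\dot x_n|\equiv \sqrt{\mathcal F(x_n)} \leq M_0$ and satisfies pointwise
\[
\Dds\dot x_n = -H^\phi(x_n)(\dot x_n,\dot x_n)\,\nabla\phi(x_n)\cdot \chi_{C_{x_n}},
\]
with non-negative scalar coefficient thanks to strong concavity (since $\dot x_n(s)\in T_{x_n(s)}\partial\Omega$ whenever $s\in C_{x_n}$, by Remark~\ref{rem:tangenza}). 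Uniform bounds on $|\dot x_n|$, $H^\phi$ and $\nabla\phi$ over $\overline\Omega$ give an $L^\infty$-bound on $\{\Dds\dot x_n\}$, hence an equi-Lipschitz bound on $\{\dot x_n\}$. Since the endpoints lie in the compact $\partial\Omega$, Arzel\`a--Ascoli extracts a subsequence with $x_n\to x_\infty$ in $C^1([0,1],M)$ and $\Dds\dot x_n$ weakly-$\ast$ convergent to $\Dds\dot x_\infty$ in $L^\infty$. Thus $x_\infty\in H^{1}([0,1],\overline\Omega)$, its endpoints lie on $\partial\Omega$, and the endpoint orthogonality $\dot x_\infty(0),\dot x_\infty(1)\perp \partial\Omega$ transfers to the limit.

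Passage to the limit in the critical condition is the main obstacle: the contact sets $C_{x_n}$ may rearrange as $n\to\infty$, even though their combinatorial complexity is uniformly controlled by Remark~\ref{rem:concavconseg}. To sidestep this, I would rewrite the equation as $\Dds\dot x_n = \lambda_n\,\nabla\phi(x_n)$ with $\lambda_n\ge 0$, $\lambda_n\equiv 0$ off $C_{x_n}$, and $(\lambda_n)$ bounded in $L^\infty$. A further extraction yields $\lambda_n$ weakly-$\ast$ convergent to some $\lambda_\infty\ge 0$ in $L^\infty$, and combined with $\nabla\phi(x_n)\to\nabla\phi(x_\infty)$ uniformly this gives $\Dds\dot x_\infty = \lambda_\infty\,\nabla\phi(x_\infty)$. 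On any interval where $\phi(x_\infty)<0$, $C^0$-convergence forces $\phi(x_n)<0$ eventually, so $\lambda_n\equiv 0$ there and $\lambda_\infty\equiv 0$ as well. For any $V\in\mathcal V^-(x_\infty)$, integration by parts then gives
\[
\int_0^1 g(\dot x_\infty,\Dds V)\,\mathrm ds = -\int_0^1 \lambda_\infty\, g\bigl(\nabla\phi(x_\infty),V\bigr)\,\mathrm ds \ge 0,
\]
the boundary terms vanishing by endpoint orthogonality of $\dot x_\infty$ together with $V(0),V(1)\in T\partial\Omega$, and the bulk integrand being non-positive on the support $\{\lambda_\infty>0\}\subset C_{x_\infty}$. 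Hence $x_\infty$ is $\mathcal V^-$-critical.

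It remains to verify the nontriviality conditions defining $Z^-$. The bound $\mathcal F(x_\infty)\leq M_0^2$ follows from $C^1$-convergence. For the strict lower bound, each $x_n$ must contain at least one nontrivial geodesic arc $x_n|_{[\alpha,\beta]}$ with $x_n(\alpha),x_n(\beta)\in\partial\Omega$ and $x_n\bigl(\left]\alpha,\beta\right[\bigr)\subset\Omega$; otherwise $x_n$ would lie entirely on $\partial\Omega$, which combined with $\dot x_n(0)\perp\partial\Omega$ and constant speed would force $x_n$ to be constant, contradicting $\mathcal F(x_n)>0$. By Remark~\ref{rem:1.4bis} such an arc dips to $\phi\leq -\delta_0$, so Lemma~\ref{thm:lem2.3} yields $\mathcal F(x_n)\ge \delta_0^2/K_0^2$, a bound that transfers to the limit. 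For the interior contact point, observe that between $s=0$ (an isolated contact point, since $\dot x_n(0)\perp\partial\Omega$ with $|\dot x_n(0)|>0$) and the first $s>0$ with $x_n(s)\in\partial\Omega$, $x_n$ traces a geodesic arc whose interior lies in $\Omega$; the same argument produces a parameter gap of size at least $\delta_0^2/(K_0^2 M_0^2)$, and symmetrically near $s=1$. Hence any $s_n^\ast\in\left]0,1\right[$ with $x_n(s_n^\ast)\in\partial\Omega$ lies in a compact subset of $\left]0,1\right[$, and one final extraction combined with $C^0$-convergence gives $s_n^\ast\to s_\infty^\ast\in\left]0,1\right[$ with $x_\infty(s_\infty^\ast)\in\partial\Omega$, completing $x_\infty\in Z^-$.
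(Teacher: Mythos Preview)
Your proof is correct and follows the same skeleton as the paper's: a uniform $C^{1,1}$ bound from Proposition~\ref{prop:descrizioneV^-} and \eqref{eq:8.1bis}, Arzel\`a--Ascoli to extract a $C^1$-convergent subsequence, then verification that the limit lies in $Z^-$.

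The one substantive difference is in how you pass the critical condition to the limit. The paper leans on the finite-interval structure of the contact sets (Remark~\ref{rem:concavconseg}) and simply says ``taking the limit in \eqref{eq:8.1bis}''. You instead extract a weak-$\ast$ limit $\lambda_\infty\ge 0$ of the scalar multipliers, identify its support as lying in $C_{x_\infty}$, and verify the variational inequality \eqref{defcc1} directly by integration by parts. Your route is more self-contained---it does not actually need the combinatorial bound on the number of contact intervals---and makes the argument fully rigorous where the paper is terse. For the interior contact point $s_\infty^\ast\in\left]0,1\right[$, the paper argues only that nonconstancy plus endpoint orthogonality force $s\ne 0,1$; you make this quantitative by bounding the first and last geodesic arcs below in parameter length, which is the same mechanism spelled out.
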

\begin{proof}
 
Let $x_n$ be a sequence in $Z^-$. By Proposition \ref{prop:descrizioneV^-}, $g(\dot x_n,\dot x_n)$ is constant and therefore pointwise bounded by $M_0^2$. Then, by \eqref{eq:8.1bis}, the second derivative $\frac{\mathrm D}{\mathrm dt}\dot x_n$ is uniformly bounded, so, up to subsequences, $x_n$ is $C^1$-convergent to some curve $x$. By Remark \ref{rem:concavconseg}, the contact sets $C_{x_n}$ consist of a finite numbers of intervals, bounded independently of $n$. Taking the limit in  \eqref{eq:8.1bis} gives that $x \in Z^-$. 
In order to see this, it suffices to show that $x$ touches $\partial\Omega$ at some instant $s\in\left]0,1\right[$. Namely, for any $n$ there exists 
$s_n \in\left]0,1\right[$ such that $\phi\big(x_n(s_n)\big)=0$. Now, up to taking a subsequence, $s_n \to s$ as $n\to\infty$, and clearly, $\phi\big((x(s)\big)=0$. Moreover by strong concavity $x$ cannot be constant, 	while $\dot x(0)$ and $\dot x(1)$ are orthogonal to $\partial \Omega$. This implies that $s \in\left]0,1\right[$, which concludes the proof.         
\end{proof}

\begin{rem}
When the set $Z^-$ is empty, then a proof of the Deformation Lemmas for our minimax setup (Section~\ref{sec:CVO}) can be obtained by classical arguments. The interesting, and more involved,  case is when $Z^-\not=\emptyset$. Under these circumstances, our proof of the deformation lemmas requires the construction of a certain invariant set $\Lambda_*$, see Section  \ref{sec:V+criticalcurves}.
\end{rem}

\begin{rem}\label{lem:nox0}
Let $x \in Z^-$ and $[a,b]\subset [0,1]$ be such that $x\vert_{|a,b]}$ is a geodesic with $x\big(\left]a,b\right[\big) \subset \Omega$, and with $x(a),x(b) \in \partial \Omega$. Since $x$ is a $\mathcal V^-$--critical curve, if $[a,b] \subset\left]0,1\right[$, the derivative $\dot x$ is tangent to $\partial \Omega$ both at $s=a$ and at $s=b$. If either $a=0$ or $b=1$ (but not both), the derivative $\dot x$  is tangent to $\partial\Omega$ at one endpoint, and  orthogonal to $\partial\Omega$ at the other endpoint. Now, if $\widetilde x$ is the affine reparameterization of $x\vert_{[a,b]}$ in $[0,1]$, we have
\[
\mathcal F(\widetilde x)=(b-a)\int_a^bg(\dot x, \dot x)\,\mathrm ds \leq M_0^2
\]
because $\int_a^bg(\dot x, \dot x)ds\leq \mathcal F(x)\leq M_0^2$. Therefore, $\widetilde x$ belongs to $\mathcal G_{M_0^2}$ (cf. \eqref{eq:GL}), and so by \eqref{eq:nonsat}, there exists $x_0 \in \Omega$ and $\rho_0>0$ such that 
\begin{equation}\label{eq:Brho}
\overline{B_{\rho_0}(x_0)} \subset \Omega,
\end{equation}
and
\begin{equation}\label{eq:Z0-nointers}
x\big([0,1]\big) \cap \overline{B_{\rho_0}(x_0)}=\emptyset,\quad \forall\,x \in Z^-,
\end{equation}
where $B_{\rho_0}(x_0)=\big\{y \in M: d(y,x_0)<\rho_0\big\}$.
\end{rem}

Now, choose $\delta_1 \in\left]0,\delta_0\right[$ such that
\begin{equation}\label{eq:delta1}
\big\{y\in \overline{\Omega}:\phi(y)\geq-\delta_1\big\} \cap  \overline{B_{\rho_0}(x_0)}=\emptyset.
\end{equation}

\begin{rem}\label{eq:sottolivellibassi}
Note that,  by Corollary \ref{smallstrip}, we  have:
\begin{equation}\label{eq:delta1-bis}
\mathcal F(x) \leq \frac{\delta_1^2}{K_0^2}\quad \Longrightarrow\quad x\big([0,1]\big) \cap  \overline{B_{\rho_0}(x_0)}=\emptyset. 
\end{equation}
\end{rem}

\subsection{\texorpdfstring{$\mathcal V^-$}{V-}--Palais-Smale sequences}. 
Let us first recall  the notion of $\mathcal V^-$--Palais-Smale sequences for the functional $\mathcal F$ at any level $c>0$.
\begin{defin}\label{def:OpenPS}
Let $c>0$. 
We say that $(x_n)_n\subset \mathfrak M_\sigma$
 is a $\mathcal V^-$-\emph{Palais-Smale sequence}  for $\mathcal F$ at the level $c$ if 
\begin{equation}\label{eq:Fxnbounded}
\lim_{n\to\infty}\mathcal F(x_n) = c,
\end{equation}
and if for all $n\in\mathds N$
and for all $V_n \in \mathcal V^-(x_n)$
such that $\Vert V_n\Vert_*=1$, the following holds:
\begin{equation}\label{eq:OPS}
\mathrm d\mathcal F(x_n)[V_n] \geq -\varepsilon_n,
\end{equation} 
where $\varepsilon_n$ is a sequence of positive numbers with $\lim\limits_{n\to\infty}\varepsilon_n=0$.
\end{defin}
Given any sequence $(x_n)$ in $\mathfrak M$ with $\mathcal F(x_n)$ bounded, then standar arguments show the existence of a subsequence of $x_n$ which is uniformly convergent. For $\mathcal V^-$-Palais--Smale sequences, a stronger convergence property holds.
\begin{prop}\label{prop:convforte}
Let $(x_n)_n\in \mathfrak M$ be a $\mathcal V^-$--Palais-Smale sequence at the level $c>0$ which is uniformly convergent to a curve $x \in \mathfrak M$. Then  $x_{n}$ is strongly $H^1$-convergent to $x$.
\end{prop}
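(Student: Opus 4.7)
The plan is to exploit the $\mathcal V^-$-Palais--Smale inequality with test vector fields $V_n\in\mathcal V^-(x_n)$ that morally represent $x-x_n$, and thereby upgrade weak $H^1$-convergence to strong $H^1$-convergence by showing $\mathcal F(x_n)\to\mathcal F(x)$. Since $\mathcal F(x_n)\to c$ and the Riemannian norm induced by $g$ is equivalent to the Euclidean norm from the embedding $M\hookrightarrow\mathds R^m$ on the compact set $\overline\Omega$, the sequence $(x_n)$ is bounded in $H^1([0,1],\mathds R^m)$. Extracting a weakly convergent subsequence whose weak limit must coincide with the uniform limit $x$ then yields $x_n\rightharpoonup x$ in $H^1$.

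Next I would build the test fields. For $n$ large, set $W_n(s)=\overline\exp^{-1}_{x_n(s)}\bigl(x(s)\bigr)$, using the auxiliary $C^3$-metric $\overline g$ of Remark~\ref{rem:totgeo}; this defines a vector field along $x_n$ with $W_n\to 0$ uniformly, and expanding the exponential to first order gives $\Dds W_n=\dot x-\dot x_n+o_{L^2}(1)$. A Taylor expansion of $\phi$ along the $\overline g$-geodesic from $x_n(s)$ to $x(s)$ yields
\[
g\bigl(\nabla\phi(x_n(s)),W_n(s)\bigr)=\phi(x(s))-\phi(x_n(s))+O\bigl(|W_n(s)|^2\bigr),
\]
which is nonpositive up to an $O(\|W_n\|_\infty^2)$ error whenever $x_n(s)\in\partial\Omega$, since $\phi(x(s))\le 0=\phi(x_n(s))$. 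To enforce \eqref{eq:Vbordo} and \eqref{eq:Venter} exactly, I set $V_n=W_n-h_n\nu_n$, where $\nu_n$ is a smooth vector field proportional to $\nabla\phi(x_n)/\|\nabla\phi(x_n)\|^2$ in the boundary collar $\phi^{-1}\bigl([-\delta_0,0]\bigr)$ and cut off elsewhere, and $h_n$ is a scalar function matching $g(\nabla\phi(x_n),W_n)$ at $s=0,1$ and dominating its positive part on the contact set $C_{x_n}$. Since the excess is $O(\|W_n\|_\infty^2)$, one can arrange $h_n\to 0$ in $H^1$, so that $V_n\in\mathcal V^-(x_n)$ and $V_n-W_n=o_{H^1}(1)$.

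Applying the PS inequality $2\int_0^1 g(\dot x_n,\Dds V_n)\,\mathrm ds\ge -\varepsilon_n\|V_n\|_*$ and substituting $\Dds V_n=\dot x-\dot x_n+o_{L^2}(1)$, one obtains $\int_0^1 g(\dot x_n,\dot x_n-\dot x)\,\mathrm ds\le o(1)$. Since $x_n\to x$ uniformly the metric coefficients along $x_n$ converge uniformly to those along $x$, and combined with $\dot x_n\rightharpoonup\dot x$ weakly in $L^2$ this gives $\int_0^1 g(\dot x_n,\dot x)\,\mathrm ds\to\mathcal F(x)$, so $\limsup\mathcal F(x_n)\le\mathcal F(x)$. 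Together with the lower semicontinuity bound $\mathcal F(x)\le\liminf\mathcal F(x_n)$ this yields $\mathcal F(x_n)\to\mathcal F(x)$, which together with weak convergence upgrades to strong $H^1$-convergence in $\mathfrak M$.

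The main technical difficulty is the construction of the correction $h_n\nu_n$ near instants where $x_n$ enters or leaves $\partial\Omega$: the unilateral constraint \eqref{eq:Venter} must be imposed at every point of the $n$-dependent contact set $C_{x_n}$ (see \eqref{eq:contatto}) while keeping $V_n-W_n$ small in $H^1$. This rests on the fact that $\psi_n:=g(\nabla\phi(x_n),W_n)$ is of order $\|W_n\|_\infty^2$ on $C_{x_n}$ and has $H^1$-norm controlled by $\|W_n\|_{H^1}$, so that a suitable smooth dominating extension matching $\psi_n$ at the endpoints and with vanishing Sobolev norm can be produced.
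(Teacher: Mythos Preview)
Your approach is correct and is essentially the standard one; the paper itself does not give a proof but simply refers to \cite[Proposition~4.2]{OT}, and the argument you outline is the one carried out there. In fact, the paper reproduces a close variant of this very computation in the proof of Lemma~\ref{lem:PSconvergenza} for the $\mathcal V^+$ cone: build $W_n(s)=(\overline\exp)^{-1}_{x_n(s)}(x(s))$, add a small correction to land in the admissible cone, apply the Palais--Smale inequality, and read off $\int g(\dot x_n,\dot x-\dot x_n)\,\mathrm ds\to 0$ from the local expansion $\Dds W_n=\dot x-\dot x_n+o_{L^2}(1)$.

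One simplification worth noting: because you are using the auxiliary metric $\overline g$ of Remark~\ref{rem:totgeo}, the hypersurface $\partial\Omega=\phi^{-1}(0)$ is $\overline g$-totally geodesic, and since $x_n(0),x(0)\in\partial\Omega$ (and similarly at $s=1$) the vector $W_n(0)=\overline\exp^{-1}_{x_n(0)}(x(0))$ already lies in $T_{x_n(0)}\partial\Omega$. Hence $\psi_n(0)=\psi_n(1)=0$ exactly, and no endpoint matching for $h_n$ is needed at all; the only correction required is the interior one enforcing \eqref{eq:Venter}. This removes the most delicate part of what you flag as the ``main technical difficulty''. For the interior, the simple choice $h_n\equiv\lambda_n\chi$ with $\chi$ a fixed cutoff and $\lambda_n=\sup_{C_{x_n}}\psi_n^+/\inf_{\operatorname{supp}\chi}\chi$ can fail if $C_{x_n}$ accumulates at the endpoints; the clean fix (used in \cite{OT} and in the proof of Lemma~\ref{lem:PSconvergenza}) is to first add a linear correction $c_n$ forcing $W_n(0)=W_n(1)=0$, so that $|W_n(s)|\le C\sqrt{s(1-s)}$ and the $O(|W_n|^2)$ bound on $C_{x_n}$ is compatible with a cutoff vanishing linearly at the endpoints.
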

\begin{proof} 
See \cite[proof of Proposition 4.2]{OT}. 
\end{proof}

\subsection{Extension of \texorpdfstring{$\mathcal V^-$}{V-}--fields.}
Let us use the following notation: for all $x \in \mathfrak M$ and all $\rho > 0$ set
\[ 
 B(x,\rho)=\big \{z \in \mathfrak M: \dist_*(z,x)  < \rho\big\}
\]
and 
\begin{equation}\label{eq:defUxrho}
\mathcal U_\rho(x)=B(x,\rho) \bigcup B(\mathcal Rx,\rho),
\end{equation}
which is clearly $\mathcal R$--invariant. 

We set:
\begin{multline}\label{eq:V-delta}
{\mathcal W^{-}}(x) =\Big\{V \in \mathcal V^{-}(x):
g\big(\nabla \phi(x(0)),V(0))=g\big(\nabla \phi(x(1)),V(1))=0, \\
g\big(\nabla \phi(x(s)),V(s)\big) < 0\ \text{ if }  s \in ]0,1[ \text{ and }\phi\big(x(s)\big)=0\Big\}.
\end{multline} 

We have the following local property  

\begin{prop}\label{thm:constrcampiV-}
Let $x \in \mathfrak M$   and let $\mu>0$ be fixed;  assume that 
there exists $V\in \mathcal V^-(x)$ such that 
\begin{equation}\label{eq:V-discesa}
\int_0^1 g\big(\dot x, \Dds V\big)\,\mathrm ds\leq -\mu\Vert V \Vert_*.
\end{equation}
Then, for any $\epsilon >0$ there exist 
$\rho_x = \rho_x(\epsilon) > 0$ and a $C^{1}$-vector field $V_x$ defined in $\mathcal U_{\rho_x}(x)$,  such that:
\begin{itemize}
\item[(i)] $V_x(\mathcal R z)=\mathcal R V_x(z)$;\smallskip

\item[(ii)] $V_x(z) \in {\mathcal W^{-}(z)}$;\smallskip

\item[(iii)] $\Vert V_x(z) \Vert_*=1$;\smallskip

\item[(iv)] $\int_0^1 g\big(\dot z, \Dds V_x(z)\big)\,\mathrm ds \leq -{\mu}+\epsilon$,
\end{itemize}
for all $z\in \mathcal U_{\rho_x}(x)$. 
\end{prop}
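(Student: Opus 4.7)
The plan is to build $V_x$ in three stages: first modify $V$ on the reference curve $x$ so that it is strictly inward-pointing while preserving the descent; second, extend the modified field to a small ball around $x$ by parallel transport along short exponential segments, with a boundary-orthogonality correction at the endpoints; third, enforce the $\mathcal R$-equivariance and normalize.

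In the first stage I may assume $\Vert V\Vert_* = 1$ after rescaling, so the hypothesis becomes $\int_0^1 g(\dot x, \tfrac{\mathrm D}{\mathrm ds} V)\,\mathrm ds \le -\mu$. The difficulty is that $V$ satisfies only the weak inequality \eqref{eq:Venter}, while (ii) requires membership in $\mathcal W^-(x)$. I fix a smooth $\psi\colon[0,1]\to[0,1]$ with $\psi(0)=\psi(1)=0$ and $\psi>0$ on $\left]0,1\right[$, and set $W(s) = -\psi(s)\,\nabla\phi(x(s))$, using a smooth extension of $\nabla\phi$ to a neighborhood of $\overline\Omega$. Then $\widetilde V := V + \lambda W$ belongs to $\mathcal W^-(x)$ for every $\lambda>0$, since $g(\nabla\phi(x(s)), \widetilde V(s)) \le -\lambda\psi(s)\Vert\nabla\phi(x(s))\Vert^2 < 0$ at any interior contact point, while the endpoint conditions \eqref{eq:Vbordo} are untouched. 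Choosing $\lambda$ small enough (depending on $\epsilon$ and on $\int_0^1 g(\dot x,\tfrac{\mathrm D}{\mathrm ds}W)\,\mathrm ds$) gives $\int_0^1 g(\dot x,\tfrac{\mathrm D}{\mathrm ds}\widetilde V)\,\mathrm ds \le -\mu + \epsilon/4$.

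In the second stage I use the auxiliary metric $\overline g$ of Remark~\ref{rem:totgeo}: for $z\in B(x,\rho)$ write $z(s) = \overline\exp_{x(s)}(Y(s))$ with $Y$ a small $H^1$-field along $x$, and let $P_z(s)\colon T_{x(s)}M\to T_{z(s)}M$ denote parallel transport along $\sigma \mapsto \overline\exp_{x(s)}(\sigma Y(s))$. Set $V^0(z)(s) = P_z(s)\bigl(\widetilde V(s)\bigr)$. The assignment $z\mapsto V^0(z)$ is $C^1$ by the $C^3$-regularity of $\overline\exp$, and $V^0(x) = \widetilde V$. The endpoint orthogonality $g(\nabla\phi(z(j)), V^0(z)(j)) = 0$ fails by a quantity of order $\dist_*(z,x)$, so I correct it by subtracting the component along $\nabla\phi(z(j))$ at $j=0,1$ and extending the correction via a smooth cutoff supported in a fixed neighborhood of $\{0,1\}$, producing $V^{\mathrm{corr}}(z)$. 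This correction has $\Vert\cdot\Vert_*$-norm of order $\dist_*(z,x)$, so for $\rho$ small enough $V^{\mathrm{corr}}(z)\in \mathcal W^-(z)$ (the strict inward inequality is preserved by continuity on the interior, and the endpoint conditions now hold by construction), and $\int_0^1 g(\dot z,\tfrac{\mathrm D}{\mathrm ds}V^{\mathrm{corr}}(z))\,\mathrm ds \le -\mu + \epsilon/2$.

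In the third stage I enforce (i) and normalize. If $x\ne \mathcal Rx$, I take $\rho_x$ so small that $B(x,\rho_x)$ and $B(\mathcal Rx,\rho_x)$ are disjoint, set $V_x := V^{\mathrm{corr}}$ on $B(x,\rho_x)$, and $V_x(w) := \mathcal R V^{\mathrm{corr}}(\mathcal Rw)$ on $B(\mathcal Rx,\rho_x)$; the latter automatically lies in $\mathcal W^-(w)$ since $\mathcal R$ maps $\mathcal V^-(\mathcal R w)$ to $\mathcal V^-(w)$ and preserves strict inward-pointing. If $x = \mathcal Rx$, I symmetrize in place, replacing $V^{\mathrm{corr}}(z)$ by $\tfrac12\bigl(V^{\mathrm{corr}}(z) + \mathcal R V^{\mathrm{corr}}(\mathcal R z)\bigr)$, which still lies in $\mathcal W^-(z)$ by convexity of the cone. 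Dividing by $\Vert V_x(z)\Vert_*$, which is uniformly close to $1$ on the ball since $\Vert\widetilde V\Vert_* \ge 1 - O(\lambda)$ and $V^{\mathrm{corr}}$ depends continuously on $z$, yields (iii). The descent estimate (iv) picks up at worst a factor close to $1$ from this normalization, so it becomes $-\mu + \epsilon$ after a final shrinking of $\rho_x$.

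The main obstacle is the boundary-orthogonality correction in the second stage: raw parallel transport does not preserve \eqref{eq:Vbordo}, and the corrective modification must be localized near $s=0,1$ so that it does not spoil either the strict inward inequality on the interior contact set of $z$ or the descent estimate. This is managed by choosing the correction cutoff supported in small neighborhoods of the endpoints (disjoint from the bulk of the curve), and by absorbing its small $\Vert\cdot\Vert_*$-contribution into $\epsilon/4$ via a further reduction of $\rho_x$; strong concavity and continuity of the data ensure that the interior boundary-contact behavior of $\widetilde V$ transfers to $V^{\mathrm{corr}}(z)$ for $z$ close to $x$.
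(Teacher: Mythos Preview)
The paper gives no detailed argument here---it simply cites \cite[Proposition~4.3]{OT}---so there is no in-paper proof to compare against. Your three-stage scheme is reasonable, but there is a genuine gap in stage two. The claim that ``the strict inward inequality is preserved by continuity on the interior'' fails because the interior contact set of a nearby $z$ need not lie close to $C_x$. If $x$ grazes $\partial\Omega$ at some interior $s_0$ without touching it (say $\phi(x(s_0))=-\eta$ with $\eta>0$ small), then $V(s_0)$ is entirely unconstrained by membership in $\mathcal V^-(x)$; in particular $g\big(\nabla\phi(x(s_0)),V(s_0)\big)$ may be large and positive, and your perturbation $-\lambda\psi(s_0)\nabla\phi(x(s_0))$, with $\lambda$ chosen small to protect the descent estimate, will not make it negative. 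A curve $z$ with $\dist_*(z,x)$ only slightly larger than $\eta$ can satisfy $z(s_0)\in\partial\Omega$, and then the transported field $V^0(z)(s_0)$ points outward, so $V^{\mathrm{corr}}(z)\notin\mathcal V^-(z)$. Near the endpoints the obstruction is sharper still: your $\psi$ vanishes linearly at $0$ and $1$, whereas $g\big(\nabla\phi(x(s)),V(s)\big)$ is controlled only to order $s^{1/2}$ by the embedding $H^1\hookrightarrow C^{1/2}$, so the correction cannot dominate there. Neither your endpoint subtraction (which is supported away from interior contact points of $z$) nor the invocation of strong concavity addresses this.

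What is actually required is (a) a preliminary modification of $V$ ensuring $g\big(\nabla\phi(x(s)),V(s)\big)\le 0$ on a full open neighborhood $U$ of $C_x$ in $[0,1]$ (not merely on $C_x$ itself), so that for $\rho$ small the contact set of every $z\in B(x,\rho)$ is trapped in $U$ and the transported field violates the inward condition only by $O(\rho)$; and then (b) a $z$-dependent inward correction whose coefficient is the worst such violation over $B(x,\rho)$ and hence tends to $0$ with $\rho$, leaving the descent estimate intact. This is precisely how the analogous extension is organized in the $\mathcal V^+$ setting; compare the role of $\lambda(\rho)$ in the proof of Proposition~\ref{thm:constrcampiV1-}.
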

\begin{proof}
Analogous to \cite[Proposition 4.3]{OT}.
\end{proof}

\begin{rem}\label{rem:perH-} Using Propositions \ref{prop:convforte} and 
\ref{thm:constrcampiV-} we see that if $(x_n)$ is a $\mathcal V^-$--Palais-Smale sequence at the level $c> 0$, then $(x_n)$ has a subsequence which is  strongly  convergent to a $\mathcal V^-$--critical curve.
\end{rem}

Using a locally Lipschitz continuous partition of the unity, we obtain:
\begin{prop}\label{V-onglobal}
Let $C \subset \mathfrak M$ be an $\mathcal R$--invariant closed set that does not contain $\mathcal V^{-}$--critical curves, and such that  $\mathcal F(C) \subset \big[0,M_0^2\big]$. Then, there exists $\mu_C>0$,  a locally Lipschitz continuous vector field $W$ defined on $C$ such that 

\begin{itemize}
\item[(i)] $W(\mathcal R z)=\mathcal R W(z)$;\smallskip

\item[(ii)] $W(z) \in {\mathcal W}^{-}(z)$;\smallskip

\item[(iii)] $\Vert W(z) \Vert_*\leq 1$;\smallskip

\item[(iv)] $\int_0^1 g\big(\dot z, \Dds W(z)\big)\,\mathrm ds \leq -\mu_C$.
\end{itemize}
 \end{prop}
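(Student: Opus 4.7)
The strategy has four movements: (a) extract a uniform positive constant $\mu_C$ measuring ``how far from $\mathcal V^-$-critical'' the points of $C$ are; (b) cover $C$ with neighborhoods produced by Proposition \ref{thm:constrcampiV-}; (c) glue the resulting local fields via an $\mathcal R$-invariant locally Lipschitz partition of unity; (d) invoke convexity of the cone $\mathcal W^-(z)$ and linearity of the directional derivative to verify (ii)--(iv).

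For $x\in C$, let
\[
\mu(x):=\sup\Big\{-\int_0^1 g\bigl(\dot x,\Dds V\bigr)\,\mathrm ds:V\in \mathcal V^-(x),\ \Vert V\Vert_*\le 1\Big\}.
\]
Since $x$ is not $\mathcal V^-$-critical, \eqref{defcc1} fails and $\mu(x)>0$. The key claim is $\mu_C:=\inf_{x\in C}\mu(x)>0$. Otherwise, select $x_n\in C$ with $\mu(x_n)\downarrow 0$; by definition of $\mu(x_n)$, the sequence $(x_n)$ satisfies \eqref{eq:OPS} with $\varepsilon_n=\mu(x_n)$, so it is a $\mathcal V^-$-Palais--Smale sequence in the sense of Definition \ref{def:OpenPS}. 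Up to a subsequence, $\mathcal F(x_n)\to c\in[0,M_0^2]$. If $c>0$, Remark \ref{rem:perH-} provides a subsequence strongly $H^1$-convergent to a $\mathcal V^-$-critical curve. If $c=0$, the uniform convergence of $(x_n)$ (standard from bounded energy) produces a constant limit curve in $\partial\Omega$, which is trivially $\mathcal V^-$-critical. In either case, closedness of $C$ places the limit in $C$, contradicting the hypothesis that $C$ contains no $\mathcal V^-$-critical curve.

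Now fix $\eta\in\left]0,\mu_C/3\right[$. For every $x\in C$, select $V_x^0\in\mathcal V^-(x)$ with $\Vert V_x^0\Vert_*=1$ and $\int g(\dot x,\Dds V_x^0)\,\mathrm ds\le -(\mu_C-\eta)$, and apply Proposition \ref{thm:constrcampiV-} with that vector and parameters $(\mu_C-\eta,\eta)$ in place of $(\mu,\epsilon)$. This gives $\rho_x>0$ and a $C^1$-field $V_x$ on $\mathcal U_{\rho_x}(x)$ satisfying properties (i)--(iv) of Proposition \ref{thm:constrcampiV-}; in particular
\[
\int_0^1 g\bigl(\dot z,\Dds V_x(z)\bigr)\,\mathrm ds\le -(\mu_C-2\eta)\quad\forall\,z\in\mathcal U_{\rho_x}(x).
\]
Each $\mathcal U_{\rho_x}(x)$ is $\mathcal R$-invariant by \eqref{eq:defUxrho}, so $\{\mathcal U_{\rho_x}(x)\}_{x\in C}$ is an $\mathcal R$-invariant open cover of $C$. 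By paracompactness of $C$ as a metric space, choose a locally Lipschitz partition of unity $\{\chi_i\}_{i\in I}$ subordinate to some locally finite refinement, with $\mathrm{supp}(\chi_i)\subset \mathcal U_{\rho_{x_i}}(x_i)$ for chosen $x_i\in C$. Symmetrize by setting $\psi_i(z):=\tfrac12\bigl(\chi_i(z)+\chi_i(\mathcal R z)\bigr)$. The $\mathcal R$-invariance of $\mathcal U_{\rho_{x_i}}(x_i)$ forces $\mathrm{supp}(\psi_i)\subset\mathcal U_{\rho_{x_i}}(x_i)$; moreover $\psi_i\circ\mathcal R=\psi_i$, $\sum_i\psi_i\equiv 1$, and local Lipschitz continuity persists.

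Define
\[
W(z):=\sum_{i\in I}\psi_i(z)\,V_{x_i}(z).
\]
Only finitely many terms are nonzero near each $z$, so $W$ is locally Lipschitz. Equivariance (i) follows from the equivariance of each $V_{x_i}$ and $\psi_i\circ\mathcal R=\psi_i$. Property (ii) follows from the convexity of $\mathcal W^-(z)$: the boundary equalities in \eqref{eq:V-delta} are linear and the interior strict inequality $g(\nabla\phi(z(s)),V(s))<0$ is preserved by convex combinations with $\sum_i\psi_i=1$. The triangle inequality for $\Vert\cdot\Vert_*$ together with $\Vert V_{x_i}(z)\Vert_*=1$ yields (iii). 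Finally, linearity of $V\mapsto\int_0^1g(\dot z,\Dds V)\,\mathrm ds$ and the pointwise bound from Proposition \ref{thm:constrcampiV-}(iv) give
\[
\int_0^1 g\bigl(\dot z,\Dds W(z)\bigr)\,\mathrm ds\le -(\mu_C-2\eta),
\]
which, after renaming $\mu_C:=\mu_C-2\eta>0$, is property (iv). The only genuinely nontrivial step is the uniform bound $\mu_C>0$, which relies on the Palais--Smale compactness of Remark \ref{rem:perH-} and on accounting for the degenerate level $c=0$; everything after that is a careful but routine $\mathcal R$-symmetric assembly via partition of unity.
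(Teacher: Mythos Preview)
Your proof is correct and follows essentially the same route as the paper's: extract a uniform $\mu_C>0$ via the $\mathcal V^-$-Palais--Smale property (Remark~\ref{rem:perH-}), invoke Proposition~\ref{thm:constrcampiV-} at each point of $C$, and glue the resulting local fields by an $\mathcal R$-invariant locally Lipschitz partition of unity. Your treatment is in fact slightly more careful than the paper's---you handle the degenerate level $c=0$ explicitly, and you symmetrize a generic partition of unity via $\psi_i=\tfrac12(\chi_i+\chi_i\circ\mathcal R)$ rather than first forcing the refinement to be $\mathcal R$-invariant---but these are cosmetic variations on the same argument.
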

\begin{proof}
As $C$ does not contain $\mathcal V^{-}$--critical curves, and   $\mathcal F(C) \subset [0,M_0^2]$,  from Remark~\ref{rem:perH-} we deduce the existence of $\mu_C> 0$ such that for every $x \in C$ there exists $V\in \mathcal V^-(x)$ satisfying  $\int_0^1 g\big(\dot x, \Dds V\big)\,\mathrm ds\leq -2\mu_C\Vert V \Vert_*$.

Then, for all $x \in C$, we can  take $\rho_x$ and $\delta_x$ as in 
Proposition~\ref{thm:constrcampiV-}, and we consider the vector field $V_x$,  defined in $\mathcal U_{\rho_x}(x)$, and satisfying (i)---(iv) of Proposition~\ref{thm:constrcampiV-} 
(in (iv), $-\mu+\epsilon$ is replaced by $-\mu_C$).

Consider the open covering $\big\{{\mathcal U}_{\rho_x}(x)\big\}_{x\in C}$ of $C$.
Since $C$ is (a metric space, hence) paracompact, there exists a locally finite open refinement
$(\mathcal A_i)_{i\in J}$, 
with $\mathcal A_i\subset{\mathcal U}_{\rho_{x_i}}(x_i)$ for all $i$, and $\bigcup\limits_{i\in J}\mathcal A_i \supset C$.
We can assume $\mathcal A_i=\mathcal R$-invariant for all $i$ (otherwise, replace $\mathcal A_i$ with $\mathcal A_i\cup\mathcal R\mathcal A_i$).
Define, for  $z \in C$ and $i \in J$:
\[
\varrho_{x_i}(z) =  \dist_{*}\big(z,C \setminus 
\mathcal A_i\big)
\]
Since $C$ and $\mathcal A_i$ are $\mathcal R$--invariant, and   $\Rcal \circ \Rcal$ is the identity map, we get
\begin{equation*}
\varrho_{x_i}(\Rcal z) = \varrho_{x_i}(z).
\end{equation*}
Finally set, for any $i \in J$,
\[
\beta_{x_i}(z) =
\frac{\varrho_{x_i}(z)}{\sum\limits_{j\in J}\varrho_{x_j}(z)},
\]
which satisfies:
\begin{equation*}
\beta_{x_i}(\Rcal z) = \beta_{x_i}(z)
\end{equation*}
and
\[
\sum_{j\in J}\beta_{x_j}(z) = 1.
\]
The desired vector field is defined by:
\[
W(z)= \sum_{j\in J}\beta_{x_j}(z)\,V_{x_j}(z).
\]
Note that this is well defined, since for all $j$, $V_{x_j}(z)\in T_z\mathfrak M$.
\end{proof}
\section{\texorpdfstring{${\mathcal V}^+$}{V-}--vector fields and the invariant set}\label{sec:V+criticalcurves}
 \subsection{The invariant set}\label{sub:invariant}
Let $\delta_1$  be as defined in \eqref{eq:delta1}. 
Since $Z^-\subset \mathcal F^{-1}\big([0,M_0^2]\big)$, by the strong concavity assumption there exists $\Delta_*>0$ such that, for any $z\in Z^-$, there exists an interval $[a,b]\subset\left]0,1\right[$ such that 
\begin{equation}\label{eq:Z-properties}
b-a\ge2\Delta_*,\; \phi\big(z(a)\big)=\phi\big(z(b)\big)=-\tfrac13{\delta_1},\; \phi\big(z(s)\big)>-\tfrac13{\delta_1} \text{ for all }s\in\left]a,b\right[.
\end{equation}

For any  $[a,b] \subset\left ]0,1\right[$ and $x \in \mathfrak M$ set:
\begin{equation}\label{eq:effe} 
f\big(x,[a,b]\big)= \min\{\phi(x(s)): s \in [a,b]\}.
\end{equation}
and for any $\rho>0$ set
\begin{equation}\label{eq:outsideball}
\mathcal B_\rho =\big\{x \in \mathfrak M: x\big([0,1]\big)\cap B_{\rho}(x_0)=\emptyset\big\} 
\end{equation}
where $x_0$ is defined at Remark \ref{lem:nox0}.
\medskip

For $x\in\mathfrak M$, let us consider intervals $[a,b] \subset\left ]0,1\right[$ with: 
\begin{equation}\label{eq:b-aDelta*}
b-a\ge\Delta_*
\end{equation} 
such that 
\begin{equation}\label{eq:agliestremi}
\phi(x(a))\geq-\frac{\delta_1}4,\quad  \phi(x(b))\geq-\frac{\delta_1}4, 
\end{equation}
and
\begin{equation}\label{eq:sopralivelli}
f\big(x,[a,b]\big)\geq-\frac{\delta_1}2.
\end{equation}
\medskip
For any $x \in \mathfrak M$  we  set:
\begin{equation}\label{eq:mathcalI}
\mathcal I(x)=\big\{[a,b] \subset\left ]0,1\right[: \text{it satisfies }\eqref{eq:b-aDelta*}-\eqref{eq:sopralivelli}
\text{ and is maximal with respect to these properties}
\big\},
\end{equation}
(with $\mathcal I(x)=\emptyset$ if no interval $[a,b]$ satisfies \eqref{eq:b-aDelta*}--\eqref{eq:sopralivelli}).

The desired invariant is defined as follows:
\begin{equation}\label{eq:Lambda*}
\Lambda_*=\Big\{x \in \mathcal F^{-1}\big([0,M_0^2]\big): x([0,1])  \cap \mathcal B_{\rho_0} = \emptyset \text{ and  }\mathcal I(x)\neq\emptyset \Big\}.
\end{equation}
where $\rho_0$ is defined as in Remark~\ref{lem:nox0}, 
and the \emph{entrance set} will be 
\begin{equation}\label{eq:Gamma*}
\Gamma_*=\Big\{x\in \Lambda_*: f\big(x,[a,b]\big)= -\tfrac12{\delta_1}\text{ for all  }[a,b] \in \mathcal I(x)  \Big\}.
\end{equation}
\begin{rem}\label{rem:Z-incl}
Note that $\Lambda_*$  is closed and $Z^-$ is contained in the interior of  $\Lambda_*$. 
\end{rem}
We will now construct a flow which leaves $\Lambda_*$, for which $\Gamma_*$ is the entrance set of $\Lambda_*$, and with the property that the functional $\mathcal F$ is strictly decreasing along the flow near $\Gamma_*$.
\subsection{\texorpdfstring{$\mathcal V^+$}{V-}--criticality}
We will now introduce a suitable notion of criticality linked to the definition of the invariant set   whose related variations drive away from the set $\{\phi \geq -\frac{\delta_1}2 \}$.

Given $x\in \mathfrak M$ consider the closed convex cone $\mathcal V^+(x)$ of $H^1\big([a,b],
\mathds R^{2N}\big)$ defined by:
\begin{multline}\label{eq:defV+(x)globale}
\mathcal V^+(x)=\Big\{V\in  \mathcal V^-(x): V(s)=0 \text{ if }\phi\big(x(s)\big)\geq-\tfrac13{\delta_1} \text{ or }  \phi\big(x(s)\big)\leq-\delta_1, \\ 
 g\big(V(s),
\nabla\phi\big(x(s)\big)\big)\ge0 \text{ if }\phi(x(s))= -\tfrac{\delta_1}2        \Big\},
\end{multline}
see Figure~\ref{fig:2}.
\begin{figure}
\centering
\includegraphics[scale=.5]{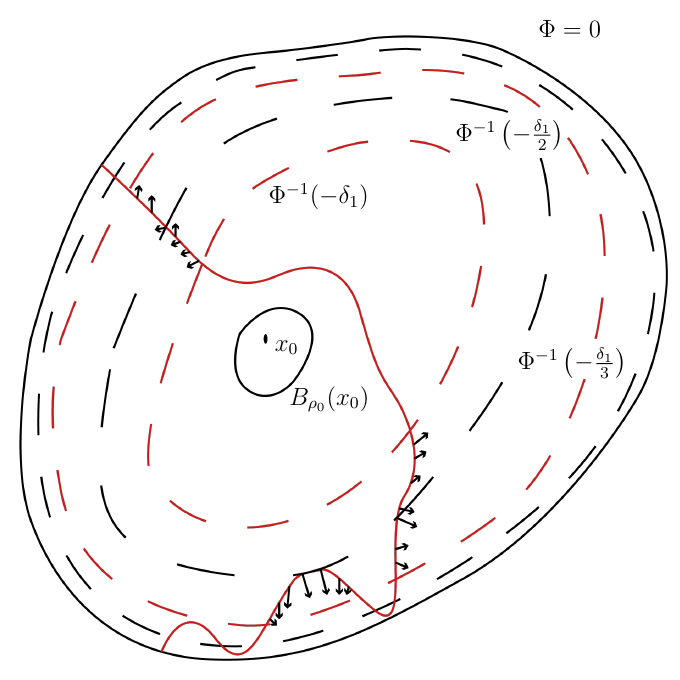}
\caption{A typical $\mathcal V^+$-field along a curve $x\in\Gamma_*$.}
\label{fig:2}
\end{figure}
Moreover, for $[a,b] \subset [0,1]$  set:  
\begin{equation}\label{eq:defV+(x)}
\mathcal V^+_{[a,b]}(x)=\Big\{V\in \mathcal V^+(x) : V(s)=0 \text{ for any } s \not \in ]a,b[\Big\}. 
\end{equation}

As for $\mathcal V^-$ the relative criticality notion goes as follows:
\begin{defin}\label{thm:defvariatcrit+}
Let $x\in\mathfrak M$ and $[a,b]\subset\left]0,1\right[$. We say that $x$ is a \emph{ $\mathcal V^+$--critical curve} if 
\begin{equation}\label{eq:varcriticality+}
\phantom{ \quad\forall\,V\in\mathcal V^+(x).}
\int_0^1 g\big(\dot x,\Dds V\big)\,\mathrm
ds\ge0, \quad\forall\,V\in\mathcal V^+(x),
\end{equation}
while $x_{|[a,b]}$ is a \emph{ $\mathcal V_{[a,b]}^{+}$--critical curve}  for $\mathcal F$ if  
\begin{equation}\label{eq:varcriticalityab+}
\phantom{ \quad\forall\,V\in\mathcal V^+_{[a,b]}(x).}
\int_a^b g\big(\dot x,\Dds V\big)\,\mathrm
ds\ge0, \quad\forall\,V\in\mathcal V^+_{[a,b]}(x).
\end{equation}
\end{defin}

We have the following cricial result. 

\begin{lem}\label{lem:nocriticality} Let $x \in \mathfrak M$ and $[a,b]\subset [0,1]$ be such that 
\begin{equation}\label{eq:agliestremi_2}
\phi(x(a))=\phi(x(b)) \in\left ]-\tfrac12{\delta_1},-\tfrac13{\delta_1}\right],
\end{equation}
\begin{equation}\label{eq:all'interno}
-\tfrac12{\delta_1} \leq \phi(x(s)) < \phi(x(a)) \text{ for any }s \in\left ]a,b\right[. 
\end{equation}
Then   $x\vert_{[a,b]}$ is  not 
 a $\mathcal V_{[a,b]}^+$--critical curve. 
\end{lem}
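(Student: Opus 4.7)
The plan is a proof by contradiction, using the strong concavity of Remark~\ref{rem:1.4} to show that the criticality assumption forces $x\vert_{[a,b]}$ to be a smooth geodesic, and then observing that for such a geodesic lying in the concave region $\phi^{-1}\bigl([-\delta_0,0]\bigr)$, the function $\psi(s):=\phi\bigl(x(s)\bigr)$ cannot have an interior local minimum---contradicting \eqref{eq:all'interno}. Indeed, at any critical point $s^*$ of $\psi$ along a geodesic, the velocity $\dot x(s^*)$ is tangent to a level set of $\phi$, so $\psi''(s^*)=H^\phi\bigl(\dot x(s^*),\dot x(s^*)\bigr)<0$ by Remark~\ref{rem:1.4}, forcing $s^*$ to be a strict local maximum of $\psi$.

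First I would exploit ``free'' variations. On the open set $O:=\{s\in\,]a,b[\,:\psi(s)>-\tfrac{\delta_1}{2}\}$, any $H^1$-vector field $V$ along $x$ with compact support in $O$ automatically lies in $\mathcal V^+_{[a,b]}(x)$, and so does $-V$: all pointwise constraints defining $\mathcal V^+$ are vacuous on $O$. Criticality applied to both $\pm V$ forces $\int_a^b g(\dot x,\Dds V)\,\mathrm ds=0$, hence $x\vert_O$ is a weak solution of the geodesic equation and, by standard regularity, a smooth geodesic on each connected component of $O$.

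The heart of the argument is showing that the touching set $T:=\{s\in[a,b]:\psi(s)=-\tfrac{\delta_1}{2}\}$ is empty; since $\psi(a)=\psi(b)=c>-\tfrac{\delta_1}{2}$, this amounts to $T\cap\,]a,b[\,=\emptyset$. Ruling out that $T$ contains an interval is a direct computation: on such an interval $\dot x$ is tangent to the level set $\phi^{-1}(-\tfrac{\delta_1}{2})$, and testing the criticality against $V=\mu\nabla\phi$ with $\mu\ge 0$ compactly supported there, strong concavity on tangent directions forces a contradiction. For an isolated point $s^*\in T\cap\,]a,b[$, $x$ is a smooth geodesic on each side of $s^*$, so integration by parts in $\int g(\dot x,\Dds V)\,\mathrm ds$---the interior terms vanishing on each piece---reduces the criticality to $g\bigl(\dot x(s^{*-})-\dot x(s^{*+}),V(s^*)\bigr)\ge 0$ for all $V(s^*)$ with $g(V(s^*),\nabla\phi)\ge 0$, i.e.\ $\dot x(s^{*-})-\dot x(s^{*+})=\lambda\,\nabla\phi(x(s^*))$ with $\lambda\ge 0$, equivalently $\psi'(s^{*-})-\psi'(s^{*+})=\lambda\Vert\nabla\phi\Vert^2\ge 0$. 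On the other hand, $s^*$ being a local minimum of $\psi$ gives $\psi'(s^{*-})\le 0\le\psi'(s^{*+})$, forcing $\lambda=0$ and $\psi'(s^{*-})=\psi'(s^{*+})=0$. But then strong concavity yields $\psi''(s^{*-})=H^\phi(\dot x(s^{*-}),\dot x(s^{*-}))<0$, so on the smooth left branch $\psi(s^*-\varepsilon)\approx\psi(s^*)+\tfrac12\psi''(s^{*-})\varepsilon^2<-\tfrac{\delta_1}{2}$, contradicting $\psi\ge-\tfrac{\delta_1}{2}$. Non-isolated points of $T$ are handled analogously by applying the jump/sign computation at $\inf T$ or $\sup T$, where one-sided geodesic regularity from the first step is available.

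Once $T=\emptyset$, the first step applied to all of $\,]a,b[$ gives that $x\vert_{[a,b]}$ is a single smooth geodesic. By \eqref{eq:all'interno}, $\psi$ attains its minimum at some interior $s^*\in\,]a,b[$, so $\psi'(s^*)=0$ and $\psi''(s^*)\ge 0$; but $\psi''(s^*)=H^\phi(\dot x(s^*),\dot x(s^*))<0$ by strong concavity, the desired contradiction. The main technical difficulty lies in the touching-set analysis---combining the one-sided constraint in $\mathcal V^+_{[a,b]}$ at points of $T$ with the local-minimum structure of $\psi$ there---once this jump/sign argument is in place, strong concavity delivers the contradiction.
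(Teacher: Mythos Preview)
Your argument is correct and reaches the same final concavity contradiction as the paper, but the route is different. The paper does not analyse the contact set $T$ by hand: it simply invokes Proposition~\ref{prop:descrizioneV^-} with $\overline\Omega$ replaced by the region $\{\phi\ge -\tfrac{\delta_1}{2}\}$. The $\mathcal V^+_{[a,b]}$-criticality condition is exactly the $\mathcal V^-$-criticality for this new obstacle, so $x\vert_{[a,b]}$ is a $C^1$ geodesic with obstacle on $\phi^{-1}(-\tfrac{\delta_1}{2})$. But this obstacle is \emph{convex} (its signed-distance Hessian is $-H^\phi>0$ on tangent directions), so in the analogue of \eqref{eq:8.1bis} the coefficient $-H^{\widetilde\phi}(\dot z,\dot z)$ is simultaneously $\ge 0$ (Lagrange multiplier sign) and $<0$ (convexity, if $\dot z\ne 0$); hence the contact set carries no curvature and $x\vert_{[a,b]}$ is a genuine geodesic. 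Remark~\ref{rem:1.4bis} then gives the contradiction. Your case analysis on $T$ is essentially a hands-on re-derivation of the piece of Proposition~\ref{prop:descrizioneV^-} needed here; it is more elementary and makes the convex/concave interplay explicit, while the paper's version is considerably shorter by outsourcing the regularity to the obstacle theory.

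One step in your sketch needs a bit more than ``analogously'': at a non-isolated $t_0=\inf T$ the jump computation does not literally apply, since $\dot x(t_0^+)$ need not exist. The fix, in the spirit of your isolated-point argument, is to test with $V=\mu\,\nabla\phi(x)$ where $\mu$ is a tent function of width $\varepsilon$ centred at $t_0$. The left half integrates exactly (geodesic on $(a,t_0)$) to $\psi'(t_0^-)$, which you have shown is strictly negative by the concavity/local-minimum argument. For the right half one computes directly
\[
\int_{t_0}^{t_0+\varepsilon}\!\mu'\psi'+\mu\,H^\phi(\dot x,\dot x)
\;=\;-\tfrac{1}{\varepsilon}\bigl(\psi(t_0+\varepsilon)+\tfrac{\delta_1}{2}\bigr)
+O\!\Bigl(\int_{t_0}^{t_0+\varepsilon}\!g(\dot x,\dot x)\Bigr)\;\le\;o(1),
\]
since $\psi\ge-\tfrac{\delta_1}{2}$. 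Hence $\int g(\dot x,\Dds V)\le\psi'(t_0^-)+o(1)<0$ for small $\varepsilon$, contradicting criticality. The same device also cleanly disposes of the residual case in your step~3a where $\dot x\equiv 0$ on an interval of $T$.
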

\begin{proof} Suppose by contradiction that $x_{|[a,b]}$ is  
a $\mathcal V_{[a,b]}^+$--critical curve.  
Then 
\begin{equation}\label{eq:alphabetacrit}
\begin{aligned}
&\int_{a}^{b} g\big(\dot x,\Dds V\big)\,\mathrm ds\ge0,  \\
\text{ for all }V\in T_x H^1&\big([a,b]\big)\ \  \text{satisfying}\ \  V(a)=V(b)=0, \text{ and }\\
g\big(V(s),\nabla&\phi\big(x(s)\big)\big)\geq 0 \text{ if }\phi\big(x(s)\big)=-\tfrac12{\delta_1}.
\end{aligned}
\end{equation}

Then by Proposition \ref{prop:descrizioneV^-}, with $\overline \Omega$ replaced by  $\phi^{-1}\big(\left[-\tfrac12\delta_1,0\right]\big)$,  we have that $x\vert_{[a,b]}$ is a geodesic with  obstacle. But 
$\{\phi \geq -\frac{\delta_1}2 \}$ is (strongly)  convex because of  strong concavity assumption, therefore, by \eqref{eq:8.1bis} (always with $\overline \Omega$ replaced by  
$\{\phi \geq  -\frac{\delta_1}2\}$) we obtain that   $x_{|[a,b]}$ is a geodesic.

But this is in contradiction with \eqref{eq:agliestremi_2}, \eqref{eq:all'interno} 
 and the strong concavity assumption (cf. Remark \ref{rem:1.4bis}).
\end{proof}

\subsection{\texorpdfstring{$\mathcal V^+$}{V-}--Palais-Smale sequences}

We will need the following version of the Palais--Smale condition with respect to the notion of 
\emph{$\mathcal V^{+}$--criticality}. Set 

\begin{equation}\label{eqmathcalIhat}
{\mathcal I}_*(x)=\big\{[a,b] \subset [0,1]: \phi\big(x(a)\big) > -\tfrac{\delta_1}{3},\ \   \phi\big(x(b)\big) > -\tfrac{\delta_1}{3},\ \ f(x,[a,b])\ge -\delta_1\big \}.
\end{equation}

\begin{lem}\label{lem:PSconvergenza}
There exist  $\mu_*>0$ and $\delta_* \in\left ]0,\frac{\delta_1}{12}\right[$ 
such that, for any $x \in \mathcal F^{-1}([0,M_0^2]$ and for any $[a,b] \in \mathcal I_*(x)$ satisfying
\begin{equation}\label{eq:defdelta*}
f\big(x,[a,b]\big) \in \big[-\tfrac12{\delta_1}-2\delta_*,-\tfrac12{\delta_1}+2\delta_*\big]
\end{equation}
 there exists  a vector  field $V$ along $x$ and $[\alpha,\beta] \subset [a,b]$ such that:  
\begin{itemize}
\item[(1)] $ \phi(x(s)) \leq -\frac{5}{12}\delta_1$ 
\footnote{
Here $-\frac{5}{12}\delta_1$ is just any fixed real number in the interval $\left]-\tfrac12{\delta_1}, -\tfrac13{\delta_1}\right[$.
}
for all $s \in [\alpha,\beta]$, $\phi(x(\alpha))=\phi(x(\beta))=-\frac{5}{12}\delta_1$ and \\  $f\big(x,[\alpha,\beta]\big)=f\big(x,[a,b]\big)$  ;\smallskip
\item[(2)] $V \in \mathcal V^+_{|[\alpha,\beta]}(x)$;\smallskip
\item[(3)] $\Vert V \Vert=1$;\smallskip
\item[(4)] $\int_\alpha^\beta g\big(\dot x, \Dds V\big)\,\mathrm ds \leq -\mu^*$.
\end{itemize}
\end{lem}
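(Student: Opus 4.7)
The plan is a compactness/contradiction argument ultimately reducing to Lemma~\ref{lem:nocriticality}. Suppose the statement fails; then there exist sequences $\delta_n\downarrow 0$, $\mu_n\downarrow 0$, curves $x_n\in\mathcal F^{-1}\big([0,M_0^2]\big)$ and intervals $[a_n,b_n]\in\mathcal I_*(x_n)$ with
\[
f\big(x_n,[a_n,b_n]\big)\in\big[-\tfrac12\delta_1-2\delta_n,\,-\tfrac12\delta_1+2\delta_n\big],
\]
along which items (1)--(4) fail for any admissible $V$, with $\mu^*$ replaced by $\mu_n$. From $[a_n,b_n]\in\mathcal I_*(x_n)$ one has $\phi\big(x_n(a_n)\big),\phi\big(x_n(b_n)\big)>-\tfrac13\delta_1>-\tfrac{5}{12}\delta_1$, whereas for $\delta_n<\tfrac{\delta_1}{24}$ the minimum of $\phi\circ x_n$ on $[a_n,b_n]$ is strictly below $-\tfrac{5}{12}\delta_1$. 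Hence the connected component $[\alpha_n,\beta_n]\subseteq[a_n,b_n]$ of $\{s:\phi(x_n(s))\le-\tfrac{5}{12}\delta_1\}$ containing a minimizer of $\phi\circ x_n$ is well-defined and realizes item~(1) by construction.

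Applying Lemma~\ref{thm:lem2.3} between an endpoint of $[\alpha_n,\beta_n]$ (where $\phi\circ x_n=-\tfrac{5}{12}\delta_1$) and an interior minimizer (where $\phi\circ x_n\le-\tfrac12\delta_1+2\delta_n$) yields the oscillation estimate $\tfrac{1}{12}\delta_1-2\delta_n\le K_0\sqrt{(\beta_n-\alpha_n)\,M_0^2}$, which gives a uniform lower bound $\beta_n-\alpha_n\ge\kappa_0>0$ for large $n$. The uniform energy bound $\mathcal F(x_n)\le M_0^2$, together with the Arzel\`a--Ascoli and weak-$H^1$ compactness of $\mathfrak M$, delivers a subsequential uniform and weak-$H^1$ limit $x_n\to x$ in $\mathfrak M$, together with $\alpha_n\to\alpha$, $\beta_n\to\beta$ and $\beta-\alpha\ge\kappa_0$. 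By continuity of $\phi$ one obtains $\phi(x(\alpha))=\phi(x(\beta))=-\tfrac{5}{12}\delta_1$, $\phi(x(s))\le-\tfrac{5}{12}\delta_1$ on $[\alpha,\beta]$, and $\min_{[\alpha,\beta]}\phi\circ x=-\tfrac12\delta_1$.

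Next, the pointwise conditions defining $\mathcal V^+_{[\alpha,\beta]}(x_n)$ depend only on the levels of $\phi\circ x_n$, and so are stable under uniform convergence, while formula \eqref{eq:dermathcalF} is continuous under weak-$H^1$ convergence of $x_n$ tested against a strongly $H^1$-convergent variation (after the usual parallel-transport identification of neighboring tangent fibers). Passing the negation of (4) to the limit then gives
\[
\int_\alpha^\beta g\big(\dot x,\Dds V\big)\,\mathrm ds\ge 0\quad\forall\,V\in\mathcal V^+_{[\alpha,\beta]}(x),
\]
i.e.\ $x\vert_{[\alpha,\beta]}$ is $\mathcal V^+_{[\alpha,\beta]}$-critical. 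To match the setup of Lemma~\ref{lem:nocriticality}, choose a connected component $[\alpha',\beta']\subseteq[\alpha,\beta]$ of the open set $\{\phi\circ x<-\tfrac{5}{12}\delta_1\}$ containing a point where $\phi\circ x=-\tfrac12\delta_1$; then $\phi(x(\alpha'))=\phi(x(\beta'))=-\tfrac{5}{12}\delta_1\in\left]-\tfrac12\delta_1,-\tfrac13\delta_1\right]$ and $-\tfrac12\delta_1\le\phi(x(s))<-\tfrac{5}{12}\delta_1$ on $\left]\alpha',\beta'\right[$. Since any admissible field on $[\alpha',\beta']$ extends by zero to a $\mathcal V^+_{[\alpha,\beta]}$-field, $x\vert_{[\alpha',\beta']}$ is $\mathcal V^+_{[\alpha',\beta']}$-critical, contradicting Lemma~\ref{lem:nocriticality}.

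The main obstacle is the limiting step: guaranteeing non-collapse of $[\alpha_n,\beta_n]$, and transferring $\mathcal V^+$-criticality to the limit curve. The first is secured by Lemma~\ref{thm:lem2.3} applied to the fixed oscillation budget $\tfrac{1}{12}\delta_1$; the second exploits the fact that the obstacle and sign conditions defining $\mathcal V^+$ are pointwise conditions on $\phi\circ x$ alone, hence robust under uniform convergence. The final sub-component extraction, which upgrades $\phi\circ x\le-\tfrac{5}{12}\delta_1$ to the strict inequality required by Lemma~\ref{lem:nocriticality}, is a routine but essential topological step.
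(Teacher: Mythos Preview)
The overall contradiction strategy and the final appeal to Lemma~\ref{lem:nocriticality} are correct, and your extraction of the sub-interval $[\alpha',\beta']$ to enforce the strict inequality required there is a nice touch. However, there is a genuine gap in the limiting step.

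Your claim that $\mathrm d\mathcal F(x_n)[V_n]$ passes to the limit under merely weak-$H^1$ convergence of $x_n$ is not correct. Expanding $g(\dot x_n,\Dds V_n)$ in local coordinates produces a term of the form $g_{ij}(x_n)\,\Gamma^j_{kl}(x_n)\,\dot x_n^i\,\dot x_n^k\,V_n^l$, which is \emph{quadratic} in $\dot x_n$; weak $L^2$-convergence of $\dot x_n$ does not pass through such products. Relatedly, if $V_n$ is obtained from a fixed $V\in\mathcal V^+_{[\alpha,\beta]}(x)$ by any ``parallel-transport identification'', then the derivative of $V_n$ already involves $\dot x_n$, so strong $H^1$-convergence $V_n\to V$ is not available without strong convergence of $x_n$. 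The paper closes this gap by first proving $\dot x_n\to\dot x$ strongly in $L^2([\alpha,\beta])$: one tests the near-criticality of $x_n$ against the specific field $W_n(s)=(\overline\exp)^{-1}_{x_n(s)}\big(x(s)\big)$ (corrected by a term $\lambda_n\chi(s)\nabla\phi(x_n(s))$ with $\lambda_n\to0$ to restore admissibility), which yields $\liminf_n\int g(\dot x_n,\dot x-\dot x_n)\,\mathrm ds\ge0$ and hence, combined with weak convergence, strong convergence. Only after this upgrade does the passage to the limit in $\int g(\dot x_n,\Dds V_n)\,\mathrm ds$ become legitimate.

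A secondary point: your assertion that the constraints defining $\mathcal V^+$ are ``stable under uniform convergence'' glosses over the sign condition $g(V_n(s),\nabla\phi(x_n(s)))\ge0$ on the moving level set $\{\phi(x_n)=-\tfrac12\delta_1\}$, which is \emph{not} automatically inherited by a transported field. The paper adds an explicit correction $\tilde\lambda_n\,\chi(s)\,\nabla\phi(x_n(s))$ with $\tilde\lambda_n\to0$; this is harmless in the limit but essential for $V_n$ to be an admissible test field for each $n$.
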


\begin{proof}
Assume by contradiction the existence of    $x_n \in \mathcal F^{-1}\big([0,M_0^2]\big)$ and $[a_n,b_n] \in \mathcal I_*(x_n)$ satisfying
\begin{equation}\label{eq:versodelta*}
f\big(x_n,[a_n,b_n]\big)\longrightarrow -\tfrac12{\delta_1},\quad\text{as $n\to\infty$},
\end{equation} 
such that  for any interval $[\alpha_n, \beta_n] \subset [a_n,b_n]$ with:
\footnote{note that \eqref{eq:intervals} is property (1) of the statement with $[\alpha_n,\beta_n]$ and $[a_n,b_n]$ replacing $[\alpha,\beta]$ and $[a,b]$ respectively.}
\begin{equation}\label{eq:intervals}
\begin{aligned}
 \phi(x_n(s)) \leq -\tfrac5{12}{\delta_1}\ &\text{ for any }s \in [\alpha_n,\beta_n], \phi(x(\alpha_n))=\phi(x(\beta_n))=-\tfrac5{12}{\delta_1} \\  &\text{ and }f\big(x_n,[\alpha_n,\beta_n]\big)=f\big(x_n,[a_n,b_n]\big)
\end{aligned}
\end{equation}
and for any vector field  $V_n \in \mathcal V^+_{[\alpha_n,\beta_n]}(x_n)$,  it is 
\begin{equation}\label{eq:noPS}
\int_{\alpha_n}^{\beta_n} g\big(\dot x_n, \Dds V_n\big)\,\mathrm ds \geq -\tfrac1n\Vert V_n\Vert.
\end{equation}
Since $\mathcal F(x_n)$ is bounded,   
up to taking a subsequence of $(x_n)$, we have the existence of $x \in F^{-1}\big([0,M_0^2]\big)$ such that 
\[
x_n \text{ is weakly convergent to $x$ in }H^{1}\big([0,1],M\big),
\]
and 
\[
x_n \to x \text{ uniformly in }[0,1].
\]

Now by \eqref{eq:versodelta*} and \eqref{eq:intervals}, up to taking a subsequence, we have:
\begin{equation}\label{eq:convergenza}
\alpha_n \to \alpha, \beta_n \to \beta, \text{ and } \alpha < \beta.
\end{equation}

Our goal is to prove that $x\vert_{[\alpha,\beta]}$ is a ${\mathcal V}^+_{[\alpha,\beta]}$--critical curve, namely 
\begin{equation}\label{eq:Gammacrit}
\int_
\alpha^\beta g\big(\dot x,\Dds V\big)\,\mathrm ds \geq 0,\ \  \text{ for all }V \in 
{\mathcal V}^+_{[\alpha,\beta]}(x),
\end{equation}
getting a contradiction with Lemma \ref{lem:nocriticality}, because $f\big(x,[\alpha,\beta]\big)=-\frac{\delta_1}2$.

Clearly, up to an affine reparameterization,  we can assume
\[
\alpha_n=\alpha,\ \text{and}\  \beta_n=\beta, \text{ for $n$ sufficiently large}.
\]
First, let us prove that 
\begin{equation}\label{eq:strongconv[a,b]}
\dot x_{n}\big\vert_{[\alpha,\beta]} \longrightarrow \dot x\big\vert_{[\alpha,\beta]}\  \text{in $L^2$, \ \ as $n\to\infty$}.
\end{equation}  

Towards this goal, we consider the exponential map $\overline{\exp}$ induced by the metric $\bar g$ described in Remark~\ref{rem:totgeo} (recall that the hypersurfaces $\phi^{-1}(-\delta)$ are $\overline g$-totally geodesic for $\delta>0$ small), and we let $c_n$ be a $H^1$--vector field along $x_n(s)$ such that
\begin{itemize}
\item $(\overline{\exp})^{-1}_{x_{n}(a)}(x(\alpha))+c_n(\alpha)=0$,
\item $(\overline{\exp})^{-1}_{x_{n}(\beta)}(x(\beta))+c_n(\beta)=0$,
\item $c_n \to 0$ in $H^1\big([\alpha,\beta]\big)$ as $n\to\infty$.
\end{itemize}
Note that such a $c_n$ exists because $x_n(\alpha) \to x(\alpha)$ and $x_n(\beta) \to x(\beta)$ as $n\to\infty$.

Consider now the $H^1$--vector field along $x_n$ given by 
\[
W_n(s)=\begin{cases} 
(\overline{\exp})^{-1}_{x_{n}(s)}(x(s))+c_n(s), \text{ if }s \in [\alpha,\beta];\\[.2cm]
0, \text{ if }s \not \in [\alpha,\beta].
\end{cases}
\]
Observe that $W_n$ is well defined for any $n$ sufficiently large, because $x_n$ tends to $x$ uniformly as $n\to\infty$. Moreover,  $W_n(\alpha)=W_n(\beta)=0$,
$W_n \to 0$ uniformly, and $\Vert W_n\Vert_*$ is bounded. 
 
Since $x_n \to x$ uniformly, by \eqref{eq:intervals},  there exists $n_0$ and  $\tilde\alpha$, $\tilde\beta$, with $\alpha< \tilde \alpha < \tilde \beta < \beta$, such that, for all $n \geq n_0$,  
 \begin{equation}\label{eq:lontananza}
-\tfrac12{\delta_1}  < \inf\Big\{\phi\big(x_n(s)\big): s \in [\alpha, \tilde \alpha]\cup[\tilde\beta, \beta]\Big\}. 
\end{equation}
Choose a piecewise affine map $\chi\colon[\alpha,\beta]\to[0,1]$ such that $\chi(\alpha)=\chi(\beta)=0$   and $\chi(s)=1$ if $s \in [\tilde \alpha,\tilde \beta]$.   Moreover, take 
\[
\lambda_n=\sup\Big\{g\big(W_n(s),\nabla\phi(x_n(s))\big)^-: s \in [\alpha,\beta]\text{ and  } \phi(x_n(s))=  -\tfrac12{\delta_1}\Big\}
\]
where $\theta^{-}$  denotes the negative part   of $\theta$. Then 
the vector field
\[
V_n(s)=W_n(s)+\lambda_n\,\chi(s)\,\nabla \phi\big(x_n(s)\big) 
\]
is in $\mathcal V^+_{[\alpha,\beta]}(x_n)$. Now $V_n$ is bounded in $H^1$, while $[\alpha_n,\beta_n]=[\alpha,\beta]$ satisfies
 \eqref{eq:noPS}. Then we have 
\begin{equation*}
\liminf_{n\to+\infty}\int_{\alpha}^{\beta} g\big(\dot x_n,\Dds V_n\big)\,\mathrm ds \geq 0.
\end{equation*}
Since $W_n \to 0 $ uniformly it is 
\[
\lambda_n \to 0,
\]
so
\begin{equation}\label{eq:disdebole}
\liminf_{n\to+\infty}\int_{\alpha}^{\beta} g\big(\dot x_n,\Dds W_n\big)\,\mathrm ds \geq 0.
\end{equation}

Now if  $U_1,\ldots,U_k$ are the domains of local charts covering $x\big([\alpha,\beta]\big)$, and  $[a_i,b_i]$ ($i=1,\ldots,k$) are intervals covering $[\alpha,\beta]$  such that $x\big([a_i,b_i]\big) \subset U_i$ for all $i$. Using the fact that $x_n$ tends to $x$ uniformly,  and $\dot x_n$ is bounded in $L^2$, one sees easily that, using the above local charts, in any interval $[a_i,b_i]$ the covariant derivative $\Dds W_n$ is given by an expression of the form:
\begin{equation}\label{eq:x-xn}
\Dds W_n = \dot x-\dot x_n+w_n^i,
\end{equation}
where $w_n^i$ is $L^2$-convergent to $0$. Then  by \eqref{eq:disdebole}
\[
\liminf_{n \to +\infty}\int_{a_i}^{b_i}g(\dot x_n, \dot x- \dot x_n)\,\mathrm ds \geq 0,
\]
(recall that $c_n\to 0$ in $H^1$).
Moreover, by the weak $L^2$--convergence of $\dot x_n$ to $\dot x$, we have:
\[
\int_{a_i}^{b_i}g(\dot x, \dot x- \dot x_n)\,\mathrm ds \to 0, 
\]
and one obtains  the $H^1$-convergence of $x_n$ to $x$.

\medskip

Now fix $V\in \mathcal V_{[\alpha,\beta]}^+(x)$ and consider 
\[ 
W_n(s)=
\big(\mathrm d\,\overline\exp_{x_n(s)}(w_n(s))\big)^{-1}\big(V(s)\big),
\]
where $w_n(s)$ is defined by the relation:
\[
\overline\exp_{x_n(s)}\big(w_n(s)\big)=x(s),\quad \forall\, s.
\]
Let $\chi$ be  as above and take
\[
\tilde \lambda_n=\sup\big\{g(W_n(s),\nabla\phi(x_n(s))^-: s \in [\alpha,\beta]\text{ and }\phi(x_n(s))=-\tfrac12{\delta_1}\big \}
\]
Since $V\in \mathcal V_{[\alpha,\beta]}^+(x)$, and $x_n \to x$ uniformly,  we have $\tilde\lambda_n \to 0$ as $n\to\infty$. Define
\[
V_n(s)=W_n(s)+\tilde\lambda_n\, \chi(s)\,\nabla \phi\big(x_n(s)\big) .
\]
Now $V_n \in \mathcal V_{[\alpha,\beta]}^+(x_n)$ and it  is bounded in $H^1$, while $[\alpha_n,\beta_n]=[\alpha,\beta]$ satisfies \eqref{eq:noPS}.  Then,  we have
\[
\liminf_{n \to +\infty}
\int_{\alpha}^{\beta} g\big(\dot x_n, \Dds V_n\big)\,\mathrm ds \geq 0.
\]
Since $x_n \to x$ in $H^1\big([\alpha,\beta]\big)$, we have  $\Dds V_n \to \Dds V$ in $L^2\big([\alpha,\beta]\big)$. 
Hence, from the above inequality we obtain \eqref{eq:Gammacrit},
and the proof is complete.
\end{proof}

Let $\delta_*$ be given as in Lemma~\ref{lem:PSconvergenza}; let us define:
\begin{multline}\label{eq:B*}
{\mathcal B}_*=\{x \in \mathcal F^{-1}\big([0,M_0^2]\big): x([0,1])  \cap \mathcal B_{\rho_0} = \emptyset \\
\text{ and  there exists }[a,b] \in \mathcal I_*(x) \text{ satisfying } \eqref{eq:defdelta*} \}.
\end{multline}
Note that $\Gamma_* \subset {\mathcal B}_*$.

\subsection{Extension of \texorpdfstring{$\mathcal V^+$}{V-}-fields} 
 
In order  to construct the global vector fields  that we shall use in        $\Lambda^*$ we first need a a local extension result about  vector fields in the class $\mathcal V^+$ for any $x$ in a neighborhood of $\Lambda_*$.

Towards this goal it will be also useful to define  the following class of vector fields in $\mathcal V^+(x)$:

\begin{equation}\label{eq:V+sigmalambda}
\mathcal W^{+}(x)=\Big\{V\in \mathcal V^+(x):     
g\big(\nabla\phi(x(s)),V(s))\big) > 0
\text{ if } 
\phi\big(x(s)\big) = -\tfrac12{\delta_1}\Big\}.   
\end{equation}

Let $\mathcal U_\rho(x)$ be defined as in \eqref{eq:defUxrho}. 
We have the following result concerning the local extension of  vector fields in $\mathcal V^{+}(x)$.  

\begin{prop}\label{thm:constrcampiV1-} Let  $\mu_*$ and ${\mathcal B}_*$  be given as in Lemma \ref{lem:PSconvergenza} and in \eqref{eq:B*} respectively.
For any  $x\in {\mathcal B}_*$   there exists $\rho_x>0$ and  a $C^1$ vector field $V_x$ defined on $\overline{\mathcal U_{\rho_x}(x)}$ such that,  for any $y \in \overline{\mathcal U_{\rho_x}(x)}$  
we have
 \begin{itemize}
\item[(i)] $V_x(\mathcal R y)=\mathcal R V_x(y)$;\smallskip

\item[(ii)] $V_x(y) \in \mathcal W^{+}(y)$;\smallskip

\item[(iii)] $\Vert V_x(y) \Vert=1$;\smallskip

\item[(iv)] $\int_{0}^{1} g\big(\dot y, \Dds V_x(y)\big)\,\mathrm ds \leq -\tfrac12{\mu_*}$.
\end{itemize}
\end{prop}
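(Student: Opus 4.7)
The plan is to mimic the local construction used in Proposition~\ref{thm:constrcampiV-} (and \cite[Proposition~4.3]{OT}), replacing the infinitesimal descent direction produced by the $\mathcal V^-$-variational inequality with the one furnished by Lemma~\ref{lem:PSconvergenza}. Fix $x\in\mathcal B_*$. By definition of $\mathcal B_*$, there exists $[a,b]\in\mathcal I_*(x)$ with $f(x,[a,b])\in[-\tfrac12\delta_1-2\delta_*,-\tfrac12\delta_1+2\delta_*]$. Apply Lemma~\ref{lem:PSconvergenza} to obtain an interval $[\alpha,\beta]\subset[a,b]$ on which $\phi\circ x\le-\tfrac{5}{12}\delta_1$ (with equality at the endpoints) and a vector field $V$ along $x$, supported in $[\alpha,\beta]$, with $V\in\mathcal V^+_{[\alpha,\beta]}(x)$, $\Vert V\Vert_*=1$ and $\int_\alpha^\beta g(\dot x,\Dds V)\,\mathrm ds\le-\mu_*$. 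In order to upgrade $V$ so that it lies in $\mathcal W^+(x)$ (i.e.\ with \emph{strict} inequality at contact points of $\phi\circ x$ with $-\tfrac12\delta_1$), pick a smooth cutoff $\chi\colon[0,1]\to[0,1]$ equal to $1$ on $\{s:\phi(x(s))=-\tfrac12\delta_1\}$ and supported in $\{s:\phi(x(s))\in(-\tfrac{11}{12}\delta_1,-\tfrac{5}{12}\delta_1)\}$, and replace $V$ by $V+\varepsilon\chi\,\nabla\phi(x)$ for a small $\varepsilon>0$; this preserves all structural properties of $\mathcal V^+$, still belongs to $\mathcal W^+(x)$, and the integral decreases only slightly, say to $-\tfrac34\mu_*$.

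Next I would extend this pointwise construction to a neighborhood of $x$ via the exponential map $\overline{\exp}$ of the auxiliary metric $\bar g$ in Remark~\ref{rem:totgeo}. For $y$ close to $x$ in $\dist_*$, define $w_y(s)$ by $\overline{\exp}_{y(s)}(w_y(s))=x(s)$ and set
\[
W_y(s)=\bigl(\mathrm d\,\overline\exp_{y(s)}(w_y(s))\bigr)^{-1}\!\bigl(V(s)\bigr).
\]
Since $\overline{\exp}$ is of class $C^3$ and $y\mapsto w_y$ is continuous in $H^1$, the map $y\mapsto W_y$ is continuous from a $\dist_*$-neighborhood of $x$ to the ambient Sobolev space. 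To enforce that $W_y$ respects the support conditions of $\mathcal V^+(y)$ (vanishing where $\phi\circ y\ge-\tfrac13\delta_1$ or $\phi\circ y\le-\delta_1$), multiply by a cutoff $\psi_y(s)$ that depends continuously on $y$ and coincides with the indicator of the analogous set for $x$ when $y=x$; for $y$ close to $x$ this cutoff modifies $W_y$ only outside the support of $V$. Finally, to secure the strict inequality defining $\mathcal W^+(y)$, add a correction $\lambda(y)\chi_y(s)\nabla\phi(y(s))$ with $\chi_y$ the natural analogue of $\chi$ for $y$ and with $\lambda(y)$ slightly larger than $\sup\{-g(\nabla\phi(y(s)),W_y(s))^-:\phi(y(s))=-\tfrac12\delta_1\}+\varepsilon$; continuity ensures $\lambda(y)\to\varepsilon$ as $y\to x$. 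Call the resulting field $\widetilde V_x(y)$, set $V_x(y)=\widetilde V_x(y)/\Vert\widetilde V_x(y)\Vert_*$ to enforce (iii), and take $\rho_x>0$ small enough that (ii) and the integral estimate (iv) (with $-\tfrac12\mu_*$ on the right) are preserved by continuity on $\overline{B(x,\rho_x)}$.

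For the $\mathcal R$-equivariance (i), argue as in Proposition~\ref{thm:constrcampiV-}: if $x\ne\mathcal R x$, then, after possibly shrinking $\rho_x$, the balls $B(x,\rho_x)$ and $B(\mathcal Rx,\rho_x)$ are disjoint, and one defines $V_x$ on the latter by $V_x(\mathcal Ry):=\mathcal R V_x(y)$, which manifestly preserves (ii)--(iv) by the $\mathcal R$-invariance of $\phi$, $\mathcal F$ and $\Vert\cdot\Vert_*$. If instead $x=\mathcal R x$, replace $V$ at the outset with its symmetrization $s\mapsto\tfrac12(V(s)+\mathcal R V(1-s))$, which still belongs to $\mathcal V^+_{[\alpha,\beta]}(x)$ (by convexity of $\mathcal V^+$ and the symmetry of $[\alpha,\beta]$, arranged by possibly enlarging $[\alpha,\beta]$ symmetrically), and satisfies the decrease estimate by linearity of $\mathrm d\mathcal F$; after this modification the extension $y\mapsto V_x(y)$ described above is automatically $\mathcal R$-equivariant.

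The main technical obstacle is handling the interaction between the two nonsmooth features of $\mathcal V^+$, namely (a) the hard cutoffs $\{\phi\circ x\in(-\delta_1,-\tfrac13\delta_1)\}$ determining where $V$ is allowed to be nonzero, and (b) the one-sided constraint $g(V,\nabla\phi\circ x)\ge 0$ on the intermediate level $\{\phi\circ x=-\tfrac12\delta_1\}$, both of which are defined in terms of $x$ but must be transported to conditions on $y$. The strict-inequality upgrade $V\mapsto V+\varepsilon\chi\nabla\phi$ combined with the $y$-dependent correction $\lambda(y)\chi_y\nabla\phi(y)$ resolves (b), while the cutoff $\psi_y$ resolves (a); both rely crucially on $\overline{\exp}$ being of class $C^3$ (cf.\ Remark~\ref{rem:totgeo}) and on the uniform convergence $y\to x$ provided by $\dist_*$-closeness, so that the various level sets of $\phi\circ y$ and $\phi\circ x$ in $[\alpha,\beta]$ are $C^0$-close.
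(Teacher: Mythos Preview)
Your approach is correct and essentially identical to the paper's: obtain $V\in\mathcal V^+_{[\alpha,\beta]}(x)$ from Lemma~\ref{lem:PSconvergenza}, transport it to nearby $y$ via $\overline{\exp}$ (Remark~\ref{rem:totgeo}), add a correction of the form $\lambda\cdot\chi\cdot\nabla\phi(y)$ to enforce the strict $\mathcal W^+$ inequality at every crossing of the level $\{\phi=-\tfrac12\delta_1\}$, normalize, and treat the cases $x\ne\mathcal Rx$ and $x=\mathcal Rx$ separately via symmetrization. One technical point where the paper is cleaner: it takes the cutoffs $\chi,\chi_x$ and the coefficient $\lambda=\lambda(\rho)$ to depend only on $x$ and the neighborhood radius (not on the running curve $y$), so that $y\mapsto W_\rho(y)=W(y)+\lambda(\rho)(\chi+\chi_x)\nabla\phi(y)$ is manifestly $C^1$; your $y$-dependent $\psi_y,\chi_y,\lambda(y)$ would require extra care to secure the stated $C^1$ regularity, and your symmetrization should read $\tfrac12(V+\mathcal RV)$ (i.e.\ $s\mapsto\tfrac12(V(s)+V(1-s))$), not $\tfrac12(V(s)+\mathcal RV(1-s))$.
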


\begin{proof}
Assume first $x \not= \mathcal R x$. Let $[a,b]\in \mathcal I_*(x)$ satisfy \eqref{eq:defdelta*}, and let $V$ and $[\alpha,\beta]$ be given as in Lemma \ref{lem:PSconvergenza}; 
Consider  the exponential map as discussed in Remark \ref{rem:totgeo}. 

For a fixed $\rho>0$ sufficiently small 
%independently of $x$ and $[a,b]$, 
and any $y \in B(x,\rho)$, set:
\begin{equation}\label{eq:defsuz} 
W(y)(s)=
\big(\mathrm d\,\overline\exp_{y(s)}(w(s))\big)^{-1}\big(V(s)\big),
\end{equation}
where $w(s)$ is defined by the relation:
\begin{equation}\label{eq:expbar}
\overline\exp_{y(s)}\big(w(s)\big)=x(s),\quad \forall\, s.
\end{equation}
If $\rho$ is sufficiently small, there exists an interval $[\tilde\alpha,\tilde \beta] \subset\left ]\alpha,\beta\right]$ such that $\phi\big(y(s)\big)$ is bounded away from $-\frac12{\delta_1}$ for all $s \in [\alpha,\tilde\alpha] \cup [\tilde\beta,\beta]$.

Let $\chi$ be a piecewise affine map such that 
$\chi(s)=0$ for any $s \not\in\left]\alpha,\beta\right[$ and 
$\chi\equiv 1$ on $[\tilde\alpha,\tilde\beta]$.
 Note that for any $\rho$ sufficiently small, $\phi\big(y(s)\big) \in\left ]-\delta_1,-\tfrac13{\delta_1}\right[$ for any $s \in \left[\alpha,\beta\right]$. 
 We want to modify $W(y)$ in order to obtain a vector field in $W^+(y)$ defined on the whole interval $\left[0,1\right[$, and to this aim we employ another piecewise affine map, that will be denoted by $\chi_x$. 
Such function must satisfy:
\begin{itemize}  
\item $\chi_x(s)>0$ on any $\left]\tilde a,\tilde b\right[$ maximal interval such that $\phi\big(x(\left]\tilde a,\tilde b\right[)\big) \subset ]-\delta_1,-\frac{\delta_1}3[$, and such that there exists $\bar s\in \left]\tilde a,\tilde b\right[$ with $\phi\big(x(\bar s)\big)=-\tfrac12{\delta}$; 
\item $\chi_x(s)=0$ if $s$ is not contained in any interval $\left]\tilde a,\tilde b\right[$ as above.
\end{itemize} 
Set
\[
\begin{aligned}
\lambda=\lambda(\rho)=\sup\big\{g(W(y)(s),&\nabla\phi(y(s)))^-: \\&
s \in [\alpha,\beta],\ \   y \in B(x,\rho),\  \text{ and }\  \phi(y(s))=-\tfrac12{\delta_1}\big\}
\end{aligned}
\]
and note that
\[
\lim_{\rho\to 0}\lambda(\rho)=0.
\]
Finally define 
\begin{equation}\label{eq:defWlambda}
W_\rho(y)(s)=W(y)(s) + \lambda(\rho)(\chi(s)+\chi_x(s))\nabla \phi(y(s)).
\end{equation}
Thanks to such a definition we have   
\[
g(W_{\rho}(y)(s),\nabla \phi(y(s)) > 0 \text{ for all }y\in B(x,\rho),
\text{ and for all }s   \text{ such that } \phi\big(y(s)\big)=-\tfrac12{\delta_1},
\]
and we have also  that  $W_{\rho}(y)\in \mathcal W^+(y)$ for all $y \in  B(x,\rho)$.

It is also easy to see that  
\begin{equation}\label{eq:limiteWl}
\lim_{\rho\to 0} \sup_{y \in B(x,\rho)}
\Vert W_\rho(y)- V \Vert=0.
\end{equation}

Since for all $\rho$ sufficiently small $W_\rho(y)\not\equiv 0$, we can define 
\[
V_\rho(y)(s)=\frac{W_\rho(y)(s)}{\Vert W_\rho(y) \Vert}.
\]
Since $\Vert V \Vert =1$, from \eqref{eq:limiteWl} we get: 
\begin{equation*}
\lim_{\rho\to 0}\  \sup_{y \in B(x,\rho)}
\Vert V_\rho(y)-V^+ \Vert_{a,b}=0.
\end{equation*} 
Then, recalling that we are assuming  $x \not= \mathcal R x$, we  extend $V_\rho$ 
to $\mathcal R\big(B(x,\rho)\big)$ by setting
\[
V_\rho(\mathcal Ry)=\mathcal R V_\rho(y).
\]
Then, the  desired vector field $V_x$ is obtained by setting $V_x=V_\rho$, where  $\rho=\rho_x$ is chosen sufficiently small so that (i)--(iv) are satisfied on $\overline{\mathcal U_{\rho_x}(x)}$.
\smallskip

Now, assume $x=\mathcal Rx$, namely: $x(s)=x(1-s)$ for any $s \in [0,1]$. If $V\in \mathcal V^+_{[\alpha,\beta]}$ is the vector field given as in Lemma \ref{lem:PSconvergenza}, we can assume $\beta=1-\alpha$, since $V=0$ outside $[\alpha,\beta]$. Now consider
\[
\widetilde V=\frac{V+\mathcal R V }{\Vert V+\mathcal R V  \Vert_*}.
\]
Since 
\[
\int_0^1g\big(\dot x, \tfrac{\mathrm DV}{\mathrm ds}\big)\,\mathrm ds=\int_0^1g\big(\dot x, \tfrac{\mathrm D}{ds}\mathcal R V\big)\,\mathrm ds
\]
we see that $V +\mathcal R V\not\equiv 0$, while $\widetilde V$ satisfies
\[
\int_0^1g\big(\dot x, \tfrac{\mathrm D\tilde V}{\mathrm ds}\big)\,\mathrm ds \leq -\mu_*.
\]
We can therefore assume that $V$ satisfies $
V = \mathcal R V$.
If  $W_\rho$ is the vector field as in \eqref{eq:defWlambda}, we can finally choose 
\[ 
V_x(y)= \frac{W_\rho(y)+ W_\rho(\mathcal R y)}{\big\Vert  W_\rho(y)+ W_\rho (R y) \big\Vert_*}
\]
and the proof is complete.
\end{proof}
As in the proof of Proposition \ref{V-onglobal}, using  again a locally Lipschitz continuous   
partition of the unity one obtains:
\begin{prop}\label{V+onC*} There exists a locally Lipschitz continuous map $W^+$ defined in ${\mathcal B}_*$ such that,  for all $x \in \mathcal B_*$ the following hold:
\begin{itemize}
\item[(i)] $W^+(\mathcal R x) =\mathcal R W^+(x)$;\smallskip

\item[(ii)] $W^+(x) \in {\mathcal W}^{+}(x)$   \smallskip

\item[(iii)] $\Vert W^+(x)(z) \Vert\leq 1$;   \smallskip

\item[(iv)]  $\int_{0}^{1} g\big(\dot x, \Dds W^+(x)\big)\,\mathrm ds \leq -\tfrac12{\mu_*}$.
\end{itemize}
\end{prop}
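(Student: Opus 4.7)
The plan is to globalize the local vector fields produced by Proposition~\ref{thm:constrcampiV1-} using an $\mathcal R$-equivariant partition of unity on $\mathcal B_*$, in exact parallel with the construction of Proposition~\ref{V-onglobal}. First, for every $x\in\mathcal B_*$, invoke Proposition~\ref{thm:constrcampiV1-} to obtain $\rho_x>0$ and a local $C^1$ vector field $V_x$ defined on $\overline{\mathcal U_{\rho_x}(x)}$ satisfying items (i)--(iv) of that Proposition. This yields an open cover $\{\mathcal U_{\rho_x}(x)\}_{x\in\mathcal B_*}$ of $\mathcal B_*$, and, since $\mathcal B_*$ inherits a metric space structure from $(\mathfrak M,\dist_*)$, it is paracompact.

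Next, extract a locally finite open refinement $(\mathcal A_i)_{i\in J}$ with $\mathcal A_i\subset\mathcal U_{\rho_{x_i}}(x_i)$, replacing each $\mathcal A_i$ by $\mathcal A_i\cup\mathcal R\mathcal A_i$ so that every member of the refinement is $\mathcal R$-invariant (this is possible because both $\mathcal B_*$ and each $\mathcal U_{\rho_x}(x)$ are themselves $\mathcal R$-invariant by construction). Define
\[
\varrho_{x_i}(z)=\dist_*\big(z,\mathcal B_*\setminus\mathcal A_i\big),\qquad \beta_{x_i}(z)=\frac{\varrho_{x_i}(z)}{\sum_{j\in J}\varrho_{x_j}(z)},
\]
which is a locally Lipschitz partition of unity subordinate to $(\mathcal A_i)$. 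Because each $\mathcal A_i$ is $\mathcal R$-invariant and $\mathcal R\circ\mathcal R=\mathrm{Id}$, the functions $\varrho_{x_i}$ and hence $\beta_{x_i}$ are $\mathcal R$-invariant: $\beta_{x_i}(\mathcal R z)=\beta_{x_i}(z)$.

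Finally, set
\[
W^+(x)=\sum_{i\in J}\beta_{x_i}(x)\,V_{x_i}(x).
\]
Property (i) follows from $\beta_{x_i}(\mathcal R x)=\beta_{x_i}(x)$ combined with $V_{x_i}(\mathcal R x)=\mathcal R V_{x_i}(x)$ and the linearity of $\mathcal R$ on tangent vectors. For property (ii), observe that $\mathcal W^+(x)$ is a convex cone in $T_x\mathfrak M$: the pointwise constraints defining $\mathcal V^+(x)$ in \eqref{eq:defV+(x)globale} (vanishing outside the strip $\{-\delta_1\le\phi\le-\tfrac13\delta_1\}$ and pointing inward at the level $\{\phi=-\tfrac12\delta_1\}$) and the strict positivity condition in \eqref{eq:V+sigmalambda} are stable under nonnegative linear combinations, since if at least one $V_{x_i}(x)$ is strictly positive in the normal direction at a point where $\phi=-\tfrac12\delta_1$, the convex combination remains strictly positive. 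Property (iii) follows from the triangle inequality and $\sum_i\beta_{x_i}(x)=1$ together with $\|V_{x_i}(x)\|=1$. Property (iv) is a direct consequence of the linearity of $V\mapsto\int_0^1 g(\dot x,\tfrac{\mathrm D}{\mathrm ds}V)\,\mathrm ds$ and item (iv) of Proposition~\ref{thm:constrcampiV1-}.

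The only delicate point, and the one I would check carefully, is that the convex cone structure of $\mathcal W^+(x)$ is preserved under the combination. Specifically, at an interior instant $s$ with $\phi(x(s))=-\tfrac12\delta_1$, each $V_{x_i}(x)(s)$ with $\beta_{x_i}(x)>0$ satisfies $g(\nabla\phi(x(s)),V_{x_i}(x)(s))>0$, so the positive convex combination is still strictly positive; away from the strip $\{-\delta_1\le\phi\le-\tfrac13\delta_1\}$ every $V_{x_i}(x)(s)$ vanishes, hence so does $W^+(x)(s)$. Everything else is a transcription of the proof of Proposition~\ref{V-onglobal}.
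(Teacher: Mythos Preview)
Your proposal is correct and follows exactly the approach the paper intends: the paper's own proof of this proposition consists of a single line referring back to the partition-of-unity argument of Proposition~\ref{V-onglobal}, and you have carried out precisely that transcription, replacing the local $\mathcal V^-$ fields by the local $\mathcal W^+$ fields supplied by Proposition~\ref{thm:constrcampiV1-}. Your verification that the convex combination remains in $\mathcal W^+(x)$ (strict positivity at the level $\phi=-\tfrac12\delta_1$, vanishing outside the strip) is the only point that is not entirely routine, and it is handled correctly.
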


\section{Deformation results and proof of Theorem \ref{thm:generale} }\label{sec:CVO}

Let $\delta_1$ be as in \eqref{eq:delta1} and $K_0$ as defined at \eqref{eq:ko}. Denote by $\mathcal O$ the set of the OGCs and set
\begin{equation}\label{eq:Or}
U_r(\mathcal O)=\big\{x \in \mathfrak M: \text{ dist}_*(x,\mathcal O)<r\big\}.
\end{equation}
Let ${\mathcal B}_*$  be as in \eqref{eq:B*} and let $\delta_*$ be as in Lemma~\ref{lem:PSconvergenza}.

\subsection{A global integral flow}
Using the vector fields defined in Propositions \ref{V-onglobal} and \ref{V+onC*}, we will now prove the following existence result of a global flow, which plays the same role as the gradient flow in the classical smooth case.
\begin{prop}\label{prop:etaglobale} 
There exists  a continuous maps $\eta_*\colon\mathds{R}^+\times \mathcal F^{-1}\big([0,M_0^2]\big)\to\mathcal F^{-1}\big([0,M_0^2]\big)$ such that
for all $z\in \mathcal F^{-1}\big([0,M_0^2]\big)$, the following properties hold:
\begin{enumerate}
\item\label{itm:id1}$\eta_*(0,z)=z$;\smallskip

\item $\mathcal R$-equivariance: $\eta_*(\tau,\mathcal R z) = \mathcal R \eta_*(\tau,z)$;\smallskip

\item $\dist_*\big(\eta_*(\tau_2,z),\eta_*(\tau_1,z)\big) \leq |\tau_2-\tau_1|$;\smallskip

\item\label{itm:C*} $\Lambda_*$ is $\eta_*$-invariant, i.e., $\eta_*(\tau_0,z) \in \Lambda_*$ then  $\eta_*(\tau,z)\in \Lambda_*$ for any $\tau \geq \tau_0$;\smallskip

\item\label{itm:G*} $\Gamma_*$ is the entrance set: if $0\leq \tau_1 < \tau_2$ and $z \in \mathcal F^{-1}\big([0,M_0^2]\big)$ are such that $\eta_*(\tau_1,z) \not\in\Lambda_*$ and $\eta_*(\tau_2,z) \in \Lambda_*$, then there exists a unique $\tau_0 \geq 0$ such that $\eta_*(\tau_0,z) \in \Gamma_*$;\smallskip

\item \label{itm:discesa}  for any $r \in\left]0,1\right[$ there exists $\mu_*(r)> 0$ such that if  
\[
\eta_*(\tau,z) \in \left(
\mathcal F^{-1}
\left(\left[\frac{r\delta_1^2}{2K_0^2},M_0^2\right]\right)
\setminus\left( 
{\mathcal U}_r({\mathcal O}) \bigcup \mathrm{int}\,\Lambda_*
\right)\right) \bigcup {{\widehat{\mathcal B}}_*},
\] 
for all  $\tau \in [\tau_1,\tau_2]$, where 
\[
{\widehat{\mathcal B}}_*=\left\{
x\in{\mathcal B}_*\,:\,\exists\; [a,b]\in{\mathcal I}_*(x) \text{\ with\ } f\big(x,[a,b]\big)\in\left[-\tfrac{\delta_1}{2}-2\delta_*,-\tfrac{\delta_1}{2}+\tfrac{\delta_*}{2}\right]  
\right\}
\] 
then 
\[ 
\mathcal F\big(\eta_*(\tau_2,z)\big)\leq 
\mathcal F\big(\eta_*(\tau_1,z)\big)-\mu_*(r)(\tau_2-\tau_1);
\] 
\item\label{itm:eta_*6bis} 
$
\mathcal F\big(\eta_*(\tau_2,z)\big)\leq 
\mathcal F\big(\eta_*(\tau_1,z)\big) \text{ for all }\tau_1 \leq \tau_2$;\smallskip

\item\label{itm:cost}  
$\eta_*(\tau,x)=x$  for any $x \in \mathfrak C_0$;\smallskip

\end{enumerate}

(recall that $\mathfrak C_0$ is defined in \eqref{eq:CC0}).
\end{prop}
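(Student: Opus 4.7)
The plan is to realize $\eta_*$ as the forward integral flow on $\mathcal{F}^{-1}([0,M_0^2])$ of a single locally Lipschitz, $\mathcal{R}$-equivariant, norm-bounded ``pseudo-gradient'' vector field $\mathcal{X}$, built by convex combination from the $\mathcal{V}^-$-field of Proposition~\ref{V-onglobal} and the $\mathcal{V}^+$-field $W^+$ of Proposition~\ref{V+onC*}. The two building blocks play complementary roles: $W^+$ is used in (a neighborhood of) $\Gamma_*$ to guarantee both the invariance of $\Lambda_*$ and the fact that $\Gamma_*$ is the entrance set, while the $\mathcal{V}^-$-field drives the uniform descent of $\mathcal{F}$ outside $\Lambda_*$ and away from OGCs. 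Concretely, I would fix an exhausting sequence $r_n\downarrow 0$ and for each $n$ apply Proposition~\ref{V-onglobal} on the closed $\mathcal{R}$-invariant set
$$C_n=\mathcal{F}^{-1}\big([r_n\delta_1^2/(2K_0^2),M_0^2]\big)\setminus\big(\mathcal{U}_{r_n}(\mathcal{O})\cup\mathrm{int}\,\Lambda_*\big),$$
which contains no $\mathcal{V}^-$-critical curve since OGCs are removed and $Z^-\subset\mathrm{int}\,\Lambda_*$ by Remark~\ref{rem:Z-incl}. This produces a field $W^-_n$ with descent rate $\mu_{C_n}>0$; gluing the $W^-_n$ via a locally Lipschitz partition of unity yields a single $\mathcal{R}$-equivariant $\mathcal{V}^-$-field $W^-$ on $\mathcal{F}^{-1}([0,M_0^2])\setminus(\mathcal{O}\cup Z^-\cup\mathfrak{C}_0)$. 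Using an $\mathcal{R}$-invariant locally Lipschitz partition of unity $\{\alpha_+,\alpha_-\}$ engineered so that $\alpha_+\equiv1$ on a closed neighborhood of $\Gamma_*$ contained in $\mathcal{B}_*$, $\mathrm{supp}\,\alpha_+\subset\mathcal{B}_*$, and $\alpha_-$ vanishes on a neighborhood of $\Lambda_*\cup\mathcal{O}\cup\mathfrak{C}_0$, I set
$$\mathcal{X}(z)=\alpha_+(z)W^+(z)+\alpha_-(z)W^-(z)$$
(extending $W^\pm$ by $0$ outside their domains), normalize by $\mathcal{X}/\max(1,\|\mathcal{X}\|_*)$, and define $\eta_*$ as the integral flow. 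Boundedness of $\mathcal{X}$ and non-increase of $\mathcal{F}$ along trajectories guarantee global existence on $\mathds{R}^+$ inside $\mathcal{F}^{-1}([0,M_0^2])$.

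Properties \eqref{itm:id1}, (2), (3), \eqref{itm:eta_*6bis}, \eqref{itm:cost} follow immediately: (1), (3) from the ODE and the norm bound (via $\dist_*(\eta_*(\tau_2,z),\eta_*(\tau_1,z))\le\int_{\tau_1}^{\tau_2}\|\mathcal{X}\|_*\,\mathrm{d}\tau$); (2) from $\mathcal{R}$-equivariance of $W^\pm$ and $\mathcal{R}$-invariance of $\alpha_\pm$; \eqref{itm:eta_*6bis} because $W^\pm$ are both descent directions and so is any convex combination; and \eqref{itm:cost} because $\mathfrak{C}_0\subset\mathcal{F}^{-1}(0)$ lies in the region where $\alpha_+=\alpha_-=0$, so $\mathcal{X}\equiv0$ there. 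For \eqref{itm:discesa}, outside the excluded set either $z\in C_r$ with $\alpha_-(z)$ bounded below (so $W^-$ contributes descent $\ge c\mu_{C_r}$) or $z\in\mathcal{B}_*$ with $\alpha_+(z)$ bounded below (so $W^+$ contributes descent $\ge c\mu_*/2$), for a uniform $c>0$ coming from the normalization; one then takes $\mu_*(r)=c\min\{\mu_{C_r},\mu_*/2\}$ and integrates the resulting differential inequality for $\mathcal{F}\circ\eta_*$.

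The main obstacle is establishing \eqref{itm:C*} and \eqref{itm:G*}. For \eqref{itm:C*}, the key observation is that $W^+(z)\in\mathcal{W}^+(z)$ enforces $g(W^+(z)(s),\nabla\phi(z(s)))>0$ exactly where $\phi(z(s))=-\delta_1/2$, so flowing along $W^+$ strictly \emph{raises} $\phi$ at the minimizers on intervals $[a,b]\in\mathcal{I}$, lifting $f(x,[a,b])$ above $-\delta_1/2$ and driving trajectories deeper into $\Lambda_*$; moreover, $W^+$ is supported where $\phi\in(-\delta_1,-\delta_1/3)$, a set disjoint from $\overline{B_{\rho_0}(x_0)}$ by \eqref{eq:delta1}, so $W^+$ does not move $x(s)$ near $\overline{B_{\rho_0}(x_0)}$ and thus preserves $x\cap B_{\rho_0}(x_0)=\emptyset$. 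Since $\alpha_+\equiv1$ on a neighborhood of $\partial\Lambda_*\cap\mathcal{F}^{-1}([0,M_0^2])$, $\mathcal{X}$ coincides with $W^+$ there; a putative exit time $\tau_1=\inf\{\tau\ge\tau_0:\eta_*(\tau,z)\notin\Lambda_*\}$ would place $\eta_*(\tau_1,z)\in\partial\Lambda_*$ with $\mathcal{X}$ pointing inward, yielding a contradiction. The critical topological input, namely $\partial\Lambda_*\cap\mathcal{F}^{-1}([0,M_0^2])\subseteq\Gamma_*$, is obtained from a continuity/compactness argument: if $x\in\partial\Lambda_*$ admitted some $[a,b]\in\mathcal{I}(x)$ with $f(x,[a,b])>-\delta_1/2$ strictly, then by continuity $[a,b]$ would lie in $\mathcal{I}(x_n)$ for any approximating $x_n\to x$, contradicting $x_n\notin\Lambda_*$. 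Property \eqref{itm:G*} then follows from closedness of $\Lambda_*$ and continuity of $\eta_*$: the entry time $\tau_0$ exists, $\eta_*(\tau_0,z)\in\partial\Lambda_*\subseteq\Gamma_*$, and uniqueness of $\tau_0$ is forced by \eqref{itm:C*}. The most delicate technical points are engineering $\{\alpha_+,\alpha_-\}$ so that $\alpha_+$ dominates on a full neighborhood of $\partial\Lambda_*$ in a compatible way with $\mathcal{R}$-invariance, and handling the simultaneous boundaries of the two defining conditions of $\Lambda_*$ (the $\mathcal{I}$-condition and the $\mathcal{B}_{\rho_0}$-condition).
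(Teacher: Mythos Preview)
Your overall strategy---realize $\eta_*$ as the flow of a convex combination of a $\mathcal V^-$-field from Proposition~\ref{V-onglobal} and the $\mathcal V^+$-field $W^+$ from Proposition~\ref{V+onC*}, glued by $\mathcal R$-invariant cutoffs---is precisely the paper's approach, and your arguments for items \eqref{itm:id1}, (2), (3), \eqref{itm:eta_*6bis}, \eqref{itm:cost} are fine. The gap is in your cutoff design. You ask simultaneously for (a) $\mathrm{supp}\,\alpha_+\subset\mathcal B_*$ and (b) $\alpha_-\equiv0$ on an open neighborhood $N$ of $\Lambda_*$. But $\Lambda_*\not\subset\mathcal B_*$: already every constant curve $x\equiv p\in\partial\Omega$ lies in $\Lambda_*$ (any $[a,b]$ with $b-a\ge\Delta_*$ is in $\mathcal I(x)$ since $\phi\circ x\equiv0$) yet $x\notin\mathcal B_*$ (for every $[a,b]$ one has $f(x,[a,b])=0>-\tfrac{\delta_1}{2}+2\delta_*$). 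Hence $N\setminus\mathcal B_*\ne\emptyset$, and on that set both $\alpha_+$ and $\alpha_-$ vanish, so $\mathcal X\equiv0$ there. But $N\setminus\Lambda_*$ contains points that are neither in $\mathrm{int}\,\Lambda_*$ nor in $\mathcal B_*$, nor near $\mathcal O$, nor of low energy; such points belong to the set in item~\eqref{itm:discesa}, yet you have no descent there. Your ``either $\alpha_-$ bounded below or $\alpha_+$ bounded below'' dichotomy thus fails.

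The paper avoids this by \emph{not} switching off the $\mathcal V^-$-part on $\Lambda_*$. Its field is $V_*=\chi_*\,\chi_+W^++(1-\chi_*)V^-$, where the transition cutoff $\chi_*$ is supported in a set $\overline{\mathcal V}\subset\mathcal B_*$ defined by a one-sided window on $f(x,[\alpha,\beta])$; outside $\overline{\mathcal V}$ (in particular on $\Lambda_*\setminus\overline{\mathcal V}$ and on all of $\mathcal F^{-1}([0,M_0^2])\setminus\mathcal B_*$) one has $V_*=V^-$, and descent in \eqref{itm:discesa} comes from $V^-$ there and from $W^+$ on $\mathcal B_*$. A related point you miss: the paper applies Proposition~\ref{V-onglobal} on the set $C$ that \emph{includes} $\mathcal B_*$ (possible because $\mathcal B_*$ has positive distance from $Z^-\cup\mathcal O\cup\mathfrak C_0$), so that $V^-$ is a genuine descent direction on $\mathcal B_*$ too; this is what makes the convex combination $\chi_*\chi_+W^++(1-\chi_*)V^-$ a descent direction on all of $\mathcal B_*$, not just where $\chi_*$ is near $1$. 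Invariance \eqref{itm:C*} is then argued only through the $\mathcal W^+$-behavior of $W^+$ near $\Gamma_*$, as you sketch, not by killing $V^-$ on $\Lambda_*$.
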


\begin{proof}
For any $r \in\left ]0,1\right[$ apply Proposition~\ref{V-onglobal} with 
\[
C_r=\left(
\mathcal F^{-1}
\left(\left[\frac{r\delta_1^2}{2K_0^2},M_0^2\right]\right)
\setminus\left( 
{\mathcal U}_r(\mathcal O) \bigcup \mathrm{int}\,\Lambda_*
\right)\right) \bigcup {\mathcal B}_*
\]
and denote by  $W^-_r$ the vector field that appears in its statement. 
Note that ${\mathcal B}_*$ is has positive distance from $Z^- \cup \mathcal O$, and for this reason the use of Proposition~\ref{V-onglobal} here is allowed.
Indeed, by strong concavity, $\mathcal B_*\cap(Z^-\cup\mathcal O)=\emptyset$ because for all $x\in Z^-\cup\mathcal O$ and any $[a,b]\in\mathcal I(x)$, it must be $f(x,[a,b])\ge-\tfrac{\delta_1}{3}$, whereas if $x\in\mathcal B_*$ then there exists $[a,b]\in\mathcal I_*(x)$ such that
$$
f(x,[a,b])\le -\frac{\delta_1}{2}+2\delta_*<-\frac{\delta_1}{3},
$$
recalling  from Lemma \ref{lem:PSconvergenza} that $\delta_*<\tfrac{\delta_1}{12}$.

Using a partition of the unity argument we can extend $W^-_r$ to a locally Lipschitz continuous map $V^-$ defined on all $\mathcal F^{-1}\big([0,M_0^2]\big)$  and satisfying: 
\begin{itemize}
\item $V^-(x)=0$ for all $x \in \mathcal O \cup \mathfrak C_0$; \smallskip

\item $V^-(\mathcal Rx)=\mathcal RV^-(x)$ ;\smallskip

\item $V^-(x) \in \mathcal W^-(x)$;\smallskip

\item $\Vert V^-(x) \Vert_* \leq 1$;\smallskip

\item $\int_0^1 g\big(\dot x, \Dds V^-(x)\big)\,\mathrm ds \leq - \mu(r)$,
for every $x\in C_r$, where $ \mu(r)=\mu_C>0$ is given  in Proposition~\ref{V-onglobal};\smallskip

\item $\int_0^1 g\big(\dot x, \Dds V^-(x)\big)\,\mathrm ds \leq 0$ for every $x \in  \mathcal F^{-1}\big([0,M_0^2]\big)$.
\end{itemize}\smallskip

Since $\text{int}\,\Lambda_*$ is outside $C$, we use the following construction to work on $\Lambda_*$. Set
\begin{equation*}
\mathcal V=\big\{x \in {\mathcal B}_*: \text{ there exists }[\alpha,\beta] \in \mathcal I_*(x) \text{ such that }   
  f\big(x,[\alpha,\beta]\big)>-\tfrac12{\delta_1}-\delta_*\big\},
\end{equation*}
\begin{equation*}
\mathcal U=\big\{x \in {\mathcal B}_*: \text{ there exists }[\alpha,\beta] \in \mathcal I_*(x) \text{ such that }   
  f\big(x,[\alpha,\beta]\big)>-\tfrac12{\delta_1}-\tfrac12{\delta_*}\big \}.
\end{equation*}
Note that $\overline{\mathcal U} \subset \mathcal V$.

Let $W^+$ be the vector field given by Proposition \ref{V+onC*}.
 
Choose a Lipschitz continuous map $\chi_+: \mathcal F^{-1}\big([0,M_0^2]\big) \to [0,1]$ such that:
\begin{itemize}
\item $\chi_+(x)=1$ if $x \in {\mathcal B}_*$ and there exists $[\alpha,\beta]\subset [0,1]$ such that 
\[
f\big(x,[\alpha,\beta]\big) \le-\tfrac12{\delta_1}+\tfrac12{\delta_*};\]
\item  $\chi_+(x)=0$  if $x \in {\mathcal B}_*$ and
for all  $[\alpha,\beta]\subset [0,1]$,  $f\big(x,[\alpha,\beta]\big) \geq -\tfrac12{\delta_1} + \delta_*$. 
\end{itemize}
Set $\widehat W^+=\chi_+W^+$, and trivially extend to zero $\widehat W^+$ outside $\mathcal B_*$. Now choose a Lipschitz continuous map $\chi_*: \mathcal F^{-1}\big([0,M_0^2]\big) \to [0,1]$ such that $\chi_*\equiv1$ on $\overline{\mathcal U}$ and $\chi_*(x)\equiv0$ outside $\overline{\mathcal V}$.
Finally we set 
\[
V_*(x)=\chi_*(x)\widehat W^+(x)+(1-\chi_*(x))V^-(x),
\]
and the key point here is that $V_*$ coincides with $\widehat W^+$ in a neighborhood of $\Gamma_*$. 

The homotopy $\eta_*$ is defined as the flow of $V_*$: 
\[
\begin{cases}
\dfrac{\partial \eta_*}{\partial \tau}=V_*(\eta_*),\\[.2cm]
\eta_*(0,x)=x \in  \mathcal F^{-1}\big([0,M_0^2]\big).
\end{cases}
\]

Let us observe that, due to the definitions of $\mathcal I(x)$ \eqref{eq:mathcalI} and $\mathcal V^+$ \eqref{eq:defV+(x)globale},  if $[\alpha,\beta]\in\mathcal I(x)$ it follows that, $\forall\tau\ge 0$, $\eta_*(\tau,x(\alpha))=x(\alpha)$ and $\eta_*(\tau,x(\beta))=x(\beta)$, and then $[\alpha,\beta]\in\mathcal I(\eta_*(\tau,x))$, $\forall\tau\ge 0$. 

By  Propositions~\ref{V-onglobal} and \ref{V+onC*} the flow $\eta_*$ satisfies \eqref{itm:id1}---\eqref{itm:cost} with  
\[
\mu^*(r)=\min\big\{\mu(r), \tfrac12{\mu^*}\big\}.\qedhere
\]
\end{proof}
\subsection{Deformation Lemmas}\label{sec:deflem}
Let $\delta_1$ be as in \eqref{eq:delta1} and  let $\eta_*$ be as in Proposition \ref{prop:etaglobale}. 
\begin{prop}\label{teo:deformation1} 
Let $\eta_*$ be as in Proposition \ref{prop:etaglobale}. 
Let $c \in\left [\frac{\delta_1^2}{K_0^2},M_0^2\right]$ be such that $\mathcal F^{-1}(c)$ does not contain any OGC.

Then, there exists $\varepsilon \in \left]0,\frac{\delta_1^2}{2K_0^2}\right[ $  such that 
\[
\eta_*\big(1,\mathcal F^{c+\varepsilon}\big) \subset \mathcal F^{c-\epsilon} \cup \Lambda_*.
\] 
\end{prop}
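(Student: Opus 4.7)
The plan is to combine the uniform decrease rate furnished by item~\eqref{itm:discesa} of Proposition~\ref{prop:etaglobale} with the Palais--Smale compactness of $\mathcal V^-$-critical curves (Remark~\ref{rem:perH-}) to rule out the flow from stagnating at level $c$ outside $\Lambda_*$. The argument follows the standard two-step pattern of a deformation lemma: a compactness claim isolating OGCs from level $c$, followed by a direct decrease-rate estimate.

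First I would prove the following claim: there exist $r_0>0$ and $\varepsilon_0\in\left]0,\tfrac{\delta_1^2}{2K_0^2}\right[$ such that
\[
\mathcal U_{r_0}(\mathcal O)\cap \mathcal F^{-1}\bigl([c-\varepsilon_0,c+\varepsilon_0]\bigr)\setminus\Lambda_*=\emptyset.
\]
Suppose not; then there exist sequences $r_n\downarrow 0$, $\varepsilon_n\downarrow 0$, points $z_n\notin\Lambda_*$ with $\mathcal F(z_n)\in[c-\varepsilon_n,c+\varepsilon_n]$, and OGCs $\gamma_n\in\mathcal O$ with $\dist_*(z_n,\gamma_n)<r_n$. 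Continuity of $\mathcal F$ in the $\dist_*$-topology forces $\mathcal F(\gamma_n)\to c$, and since each $\gamma_n$ is $\mathcal V^-$-critical, $(\gamma_n)$ is trivially a $\mathcal V^-$-Palais--Smale sequence at level $c$. Remark~\ref{rem:perH-} yields a subsequence converging strongly in $H^1$ to a $\mathcal V^-$-critical curve $\gamma$ with $\mathcal F(\gamma)=c$. By hypothesis $\gamma$ is not an OGC, hence $\gamma\in Z^-$, which lies in $\mathrm{int}(\Lambda_*)$ by Remark~\ref{rem:Z-incl}. Openness of $\mathrm{int}(\Lambda_*)$ together with the $\dist_*$-convergence of the $\gamma_n$ (and hence of the $z_n$) to $\gamma$ then forces $z_n\in\Lambda_*$ for $n$ large, contradicting $z_n\notin\Lambda_*$.

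Given the claim, fix $r_0$ as above, let $\mu=\mu_*(r_0)>0$ be the corresponding decrease rate in~\eqref{itm:discesa}, and set $\varepsilon=\min\{\varepsilon_0,\mu/2\}$. Let $z\in\mathcal F^{c+\varepsilon}$ and assume for contradiction that $\eta_*(1,z)\notin\Lambda_*$ and $\mathcal F(\eta_*(1,z))>c-\varepsilon$. Forward invariance~\eqref{itm:C*} gives $\eta_*(\tau,z)\notin\Lambda_*$ for all $\tau\in[0,1]$, and monotonicity~\eqref{itm:eta_*6bis} combined with the lower bound yields $\mathcal F(\eta_*(\tau,z))\in\left]c-\varepsilon,c+\varepsilon\right]$. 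The claim then ensures $\eta_*(\tau,z)\notin\mathcal U_{r_0}(\mathcal O)$, and the condition $c-\varepsilon\geq\tfrac{\delta_1^2}{2K_0^2}\geq\tfrac{r_0\delta_1^2}{2K_0^2}$ (using $c\geq\delta_1^2/K_0^2$, $\varepsilon<\delta_1^2/(2K_0^2)$, and $r_0\in\left]0,1\right[$) removes the low-energy exception in~\eqref{itm:discesa}. The flow line therefore lies in the decrease set throughout $[0,1]$, so
\[
\mathcal F(\eta_*(1,z))\leq \mathcal F(z)-\mu\leq c+\varepsilon-\mu\leq c-\varepsilon,
\]
a contradiction. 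The main obstacle is the compactness claim: OGCs need not be isolated in energy, and a priori their levels may cluster at $c$; Palais--Smale delivers a limit $\mathcal V^-$-critical curve at exactly level $c$, but this limit may be a geodesic with obstacle rather than an OGC, and it is the stronger fact $Z^-\subset\mathrm{int}(\Lambda_*)$ (not merely $Z^-\subset\Lambda_*$) that lets us pull the approximating OGCs, and in turn the nearby points $z_n$, into $\Lambda_*$.
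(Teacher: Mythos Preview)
Your proof is correct and follows the same overall strategy as the paper: a compactness step to separate the flow line from $\mathcal U_r(\mathcal O)$ near level $c$, followed by the uniform decrease rate from item~\eqref{itm:discesa} with $\varepsilon=\min\{\varepsilon_0,\mu/2\}$. The paper's version is terser: it simply \emph{asserts} the existence of $\bar\varepsilon$ such that $\mathcal F^{-1}([c-\bar\varepsilon,c+\bar\varepsilon])$ contains no OGC, and then of $r$ such that $\overline{\mathcal U_r(\mathcal O)}$ is disjoint from that energy slab, without invoking $\Lambda_*$ at this stage at all.

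Your compactness claim is formulated differently and is in fact slightly weaker (hence easier to prove): you only require $\mathcal U_{r_0}(\mathcal O)\cap\mathcal F^{-1}([c-\varepsilon_0,c+\varepsilon_0])\subset\Lambda_*$, and you establish it by an explicit Palais--Smale argument together with the inclusion $Z^-\subset\mathrm{int}(\Lambda_*)$ from Remark~\ref{rem:Z-incl}. This buys you something: the paper's bare assertion that no OGC has energy in $[c-\bar\varepsilon,c+\bar\varepsilon]$ tacitly presumes that OGC energies cannot accumulate at $c$, which is not obvious a priori (a sequence of OGCs could in principle converge to a WOGC at level $c$). Your argument sidesteps this by allowing the limit to land in $Z^-$ and then absorbing the nearby $z_n$ into $\Lambda_*$. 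In the paper's intended application one may assume there are only finitely many OGCs, so the distinction is moot there, but your version proves the proposition exactly as stated. One cosmetic point: you use $r_0\in\left]0,1\right[$ in the second step without having recorded it in the claim; since the claim is monotone in $r_0$ you may of course shrink $r_0$ below $1$ at no cost.
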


\begin{proof}
Fix  $\overline \varepsilon\in \left]0,\frac{\delta_1^2}{2K_0^2}\right[$  such that $\mathcal F^{-1}\big([c-\overline \varepsilon,c+\overline \varepsilon]\big)$  does not contain OGCs. Fix $r > 0$ such that 
$\mathcal F^{-1}\big([c-\overline \varepsilon,c+\overline \varepsilon]\big)$ does not contain  $\overline{\mathcal U_r(\mathcal O})$.

Now, let  $\mu_*(r)$ be given as in Proposition \ref{prop:etaglobale}. Then, using \eqref{itm:C*} and 
\eqref{itm:discesa} of Proposition \ref{prop:etaglobale}, we obtain the proof choosing $\varepsilon = \min\big\{\bar \varepsilon, \frac12{\mu_*(r)}\big\}$. Indeed, with such a choice for $\epsilon$, keeping into account \eqref{itm:C*} and \eqref{itm:discesa} of Proposition~\ref{prop:etaglobale}, we have that $\mathcal F(\eta_*(1,x))\leq c-\epsilon$ or there exists $\tau_0(x) \in [0,1]$ such that $\eta(\tau,x) \in \Lambda_*$ for any $\tau \geq \tau_0(x)$.
\end{proof}

Let us assume from now on that the  number of geometrically distinct OGCs is finite, say $\gamma_1,\ldots,\gamma_k$, and fix $r_*> 0$ such that 
\begin{itemize}

\item the sets  $\big\{y \in \partial \Omega : \dist(y,\gamma_i(0)) < r_* \big\}$ and  $\big\{y \in \partial \Omega : \dist(y,\gamma_i(1)) < r_*\big\}$ are disjoint and contractible in $\partial\Omega$ for all $i$;\smallskip

\item $\overline{B(\gamma_i,r_*)} \cap \overline{B(\gamma_j,r_*)} = \emptyset$ for every $i\not=j$;\smallskip

\item $\overline{B(\gamma_i,r_*)} \cap \overline{B(\mathcal R \gamma_i,r_*))} =\emptyset$ for every $i$,\smallskip

\item $\Big(\overline{B(\gamma_i,r_*)}\cup \overline{B(\mathcal R \gamma_i,r_*)}\Big) \cap \Lambda^*  =  \emptyset$ for any $i$, \smallskip

\item $\overline{U_r(\mathcal O)} \cap \mathfrak C_0=\emptyset$, \smallskip

\end{itemize}\smallskip
where $\mathcal O_r$ is defined at \eqref{eq:Or}.

Note that, for every $r \in\left]0,r_*\right]$, we have : 
\begin{equation}\label{eq:defOr}
\mathcal O_r=\bigcup_{i=1,\ldots,k} B(\gamma_i,r) \cup B(\mathcal R\gamma_i,r).
\end{equation}

Using again  Proposition~\ref{prop:etaglobale} we obtain the  following
\begin{prop}\label{teo:deformation2}
Assume there are only a finite number of OGCs, and
let $c \in\left [\frac{\delta_1^2}{K_0^2},M_0^2\right]$ be such that there exists at least an OGC $\gamma$ with $\mathcal F(\gamma)=c$. 
Let $r_*$ be as above.
Then, there exists $\varepsilon>0$   such that 
\[
\eta_*(\frac{r_*}{2},\mathcal F^{c+\epsilon} \setminus \mathcal O_{r_*}) \subset \mathcal F^{c-\epsilon} \cup \Lambda_*.
\]
where $\eta_*$ is given by Proposition \ref{prop:etaglobale}.
\end{prop}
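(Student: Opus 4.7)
The strategy closely mimics the proof of Proposition~\ref{teo:deformation1}, except that the hypothesis ``$\mathcal F^{-1}([c-\bar\varepsilon,c+\bar\varepsilon])$ contains no OGCs'' is no longer available: $c$ is itself a critical value realized by some $\gamma_i$, so every energy strip around $c$ meets $\mathcal U_r(\mathcal O)$ for every $r>0$. The replacement is a spatial, rather than energetic, separation coming from the starting assumption $z\notin\mathcal O_{r_*}$, combined with the speed bound from Proposition~\ref{prop:etaglobale}\,(3).

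Concretely, I would set $r_1=\min\{r_*/2,\tfrac12\}\in\left]0,1\right[$, let $\mu_*(r_1)>0$ be the constant provided by Proposition~\ref{prop:etaglobale}\eqref{itm:discesa}, and choose
\[
\varepsilon\in\Big(0,\ \min\big\{\tfrac12\mu_*(r_1)\,r_1,\ \tfrac{\delta_1^2}{4K_0^2}\big\}\Big).
\]
By the disjointness properties demanded of $r_*$ just before the statement, $\mathcal U_{r_1}(\mathcal O)=\mathcal O_{r_1}$. For any $z\in\mathcal F^{c+\varepsilon}\setminus\mathcal O_{r_*}$, I would follow the orbit $\tau\mapsto\eta_*(\tau,z)$ on $[0,1]$: if it enters $\Lambda_*$ at some time $\tau_0\in[0,1]$, invariance \eqref{itm:C*} gives $\eta_*(1,z)\in\Lambda_*$; otherwise, the Lipschitz bound in Proposition~\ref{prop:etaglobale}\,(3) together with $\dist_*(z,\mathcal O)\geq r_*$ yield $\dist_*(\eta_*(\tau,z),\mathcal O)\geq r_*-\tau\geq r_1$ for all $\tau\in[0,r_1]$, so $\eta_*(\tau,z)\notin\mathcal U_{r_1}(\mathcal O)$ throughout this ``safety window''.

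It would then remain to apply \eqref{itm:discesa} with $r=r_1$ on $[0,r_1]$. If the orbit's energy drops to $c-\varepsilon$ at some time in $[0,r_1]$, monotonicity \eqref{itm:eta_*6bis} closes the argument. Otherwise, along $[0,r_1]$ the orbit stays above $c-\varepsilon>r_1\delta_1^2/(2K_0^2)$, outside $\Lambda_*$, and outside $\mathcal U_{r_1}(\mathcal O)$, so \eqref{itm:discesa} yields
\[
\mathcal F(\eta_*(r_1,z))\leq \mathcal F(z)-\mu_*(r_1)\,r_1\leq c+\varepsilon-\mu_*(r_1)\,r_1\leq c-\varepsilon,
\]
and monotonicity propagates this bound to $\tau=1$. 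The main obstacle is purely bookkeeping: one must simultaneously verify the three exclusions entering \eqref{itm:discesa} (low-energy sublevel, $\mathcal U_r(\mathcal O)$, $\mathrm{int}\,\Lambda_*$). Here each is handled by a distinct device, respectively the choice of $\varepsilon$, the speed bound applied to the initial separation $r_*$, and the case split on entering $\Lambda_*$.
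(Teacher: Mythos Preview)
Your proof is correct, and since the paper's own argument is merely the one line ``It follows readily from Proposition~\ref{prop:etaglobale}, arguing as in the proof of Proposition~\ref{teo:deformation1}'', you have in fact supplied the missing details rather than taken a different route. The one genuinely new ingredient relative to Proposition~\ref{teo:deformation1}---using the Lipschitz bound in item~(3) of Proposition~\ref{prop:etaglobale} to guarantee that an orbit starting outside $\mathcal O_{r_*}$ cannot reach $\mathcal U_{r_1}(\mathcal O)$ within time $r_1$---is exactly what is needed, since the energetic separation device used in Proposition~\ref{teo:deformation1} (choosing $r$ so that $\overline{\mathcal U_r(\mathcal O)}$ misses the strip $\mathcal F^{-1}([c-\bar\varepsilon,c+\bar\varepsilon])$) is unavailable when $c$ itself is a critical value. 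One cosmetic remark: in your sub-case ``the orbit stays above $c-\varepsilon$ on $[0,r_1]$'', the conclusion $\mathcal F(\eta_*(r_1,z))\le c-\varepsilon$ formally contradicts the sub-case hypothesis at the endpoint, so the cleanest phrasing is by contradiction (this sub-case is in fact empty), but the logic is sound either way.
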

\begin{proof}
It foillows readily from Proposition~\ref{prop:etaglobale}, arguing as in the proof of Proposition~\ref{teo:deformation1}.
\end{proof}

For our next result, assumption~\eqref{eq:nonsat} will play a crucial role.
\begin{prop}\label{prop:sublevels}
Let $\delta_1$ be as in \eqref{eq:delta1}. Under the assumptions of Theorem~\ref{thm:generale},
there exists a continuous map $H\colon[0,1] \times {\mathcal F}^{-1}\big([0,{\delta_1^2}/{K_0^2}]\big) \cup \Lambda_* \to \mathfrak M$ such that, for all $
x \in {\mathcal F}^{-1}\big([0,{\delta_1^2}/{K_0^2}]\big) \cup \Lambda_*$, the following properties hold:
\begin{itemize}
\item $H(0,x)=x$,
\item $H(\tau,x)=x$ for all $\tau$ and for all $x \in \mathfrak C_0$,
\item $H(\tau,\cdot)$ is $\mathcal R$-equivariant for all $\tau$,
\item $H(1,x)(s) \in \partial \Omega$, for all $s \in [0,1]$.
\end{itemize}
\end{prop}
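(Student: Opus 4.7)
The plan is to construct $H$ by combining a pointwise topological radial retraction of $\overline\Omega\setminus\{x_0\}$ onto $\partial\Omega$ with a broken-geodesic smoothing step that restores $H^1$-regularity. Every curve $x$ in the domain satisfies $x([0,1])\cap B_{\rho_0}(x_0)=\emptyset$: for $x\in\mathcal F^{-1}\bigl([0,\delta_1^2/K_0^2]\bigr)$ this is the content of Remark~\ref{eq:sottolivellibassi} (via \eqref{eq:delta1-bis}), while for $x\in\Lambda_*$ it is built into the definition \eqref{eq:Lambda*}. In particular every such $x$ avoids the point $x_0$, so that a pointwise retraction defined on $\overline\Omega\setminus\{x_0\}$ makes sense along $x$.

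The first ingredient is the pointwise retraction. Fix a homeomorphism $\Psi\colon\overline\Omega\to\mathds D^N$ (as in Lemma~\ref{thm:corde}) sending $x_0$ to the origin, and let $\overline r_\tau\colon\mathds D^N\setminus\{0\}\to\mathds D^N$ be the radial straight-line homotopy $\overline r_\tau(y)=(1-\tau)y+\tau\,y/|y|$. One checks at once that $\overline r_0=\mathrm{id}$, $\overline r_1(\mathds D^N\setminus\{0\})\subset\partial\mathds D^N$, $\overline r_\tau$ is the identity on $\partial\mathds D^N$, and $|\overline r_\tau(y)|=(1-\tau)|y|+\tau\le 1$, so the image remains in $\mathds D^N$. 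Transferring by $\Psi^{-1}$ produces a continuous pointwise homotopy $r_\tau\colon\overline\Omega\setminus\{x_0\}\to\overline\Omega$ with $r_0=\mathrm{id}$, $r_1\bigl(\overline\Omega\setminus\{x_0\}\bigr)\subset\partial\Omega$, and $r_\tau\big|_{\partial\Omega}=\mathrm{id}_{\partial\Omega}$. Setting naively $H(\tau,x)(s)=r_\tau(x(s))$ yields a continuous curve, but in general not of class $H^1$, because $\Psi$ is merely a homeomorphism.

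The second step repairs this via a broken-geodesic approximation, in the spirit of the closing paragraph in the proof of Lemma~\ref{thm:corde}. Let $\overline\exp$ be the exponential map of the auxiliary metric $\overline g$ of Remark~\ref{rem:totgeo}. For each $\tau\in[0,1]$ and each $x$ in the domain, choose a finite partition $0=s_0<s_1<\cdots<s_k=1$ fine enough that consecutive points $r_\tau(x(s_{i-1}))$ and $r_\tau(x(s_i))$ lie in a common $\overline\exp$-normal neighborhood, and replace $r_\tau\circ x$ on each $[s_{i-1},s_i]$ by the unique short $\overline g$-geodesic joining its endpoints, affinely parameterized. This produces an $H^1$-curve; a partition locally finite in $(\tau,x)$ (obtained from a locally finite refinement of a covering of $[0,1]\times\bigl(\mathcal F^{-1}([0,\delta_1^2/K_0^2])\cup\Lambda_*\bigr)$ by neighborhoods adapted to coordinate charts) makes the resulting map continuous into $\mathfrak M$. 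To enforce $H(0,x)=x$ exactly, one prepends a short reparameterization interval in $\tau$ during which $x$ is continuously deformed into its broken-geodesic interpolation at the partition points; since the latter tends to $x$ in $H^1$ as the mesh tends to zero, such a deformation can be built uniformly in $x$.

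The three required properties then follow from the pointwise nature of $r_\tau$. The map $H(\tau,\cdot)$ is $\mathcal R$-equivariant as soon as the partition is chosen symmetrically under $s\mapsto 1-s$, which can always be arranged by symmetrization. Every constant curve $x\equiv A\in\partial\Omega$ in $\mathfrak C_0$ is fixed by $H$ because $r_\tau$ is the identity on $\partial\Omega$. Finally, $H(1,x)(s)=r_1(x(s))\in\partial\Omega$ for all $s\in[0,1]$. The principal technical obstacle is precisely the $H^1$-regularity of the retraction when $\Psi$ is only a homeomorphism: the broken-geodesic smoothing is the essential tool, and some bookkeeping is needed so that the partition depends continuously on the pair $(\tau,x)$, is $\mathcal R$-symmetric, and matches the identity at $\tau=0$ while landing in $\partial\Omega$ at $\tau=1$.
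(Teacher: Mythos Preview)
Your proposal is correct and follows essentially the same approach as the paper: transfer a retraction of $\mathds D^N\setminus\{y_0\}$ onto $\partial\mathds D^N$ via the homeomorphism $\Psi$, observe that every curve in the domain avoids $x_0$ (by Remark~\ref{eq:sottolivellibassi} and the definition of $\Lambda_*$), and repair the lack of $H^1$-regularity with a broken-geodesic procedure as in Lemma~\ref{thm:corde}. You are in fact more explicit than the paper on several points it leaves tacit---the specific radial retraction fixing $\partial\mathds D^N$ (which directly yields the $\mathfrak C_0$-fixing property), the use of $\overline g$ so that $\partial\Omega$ is totally geodesic and hence preserved at $\tau=1$, the symmetric partition for $\mathcal R$-equivariance, and the handling of $H(0,x)=x$.
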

\begin{proof} 
Let us fix  a homeomorphism $\Psi\colon\overline \Omega \to \mathds{D}^N$
and set 
\[
y_0=\psi(x_0)
\] 
where $x_0$ satisfies \eqref{eq:Brho} and  \eqref{eq:Z0-nointers}. 
Then, let us choose  a homotopy \[h_0\colon[0,1] \times \mathds{D}^N\setminus{y_0} \longrightarrow\mathds{D}^N\setminus{y_0} \] satisfying:
\begin{itemize}
\item $h_0(0,z)=z$ for all $z$,
\item $h_0(1,z)\in \partial\mathds D^N$ for any $z\in \mathds{D}^N\setminus \{y_0\}$.
\end{itemize}
If  $\mathcal F(x)\leq {\delta_1^2}/{K_0^2}$, or  if $x\in \Lambda_*$, then $x$ does not cross the set $\overline{B_{\rho_0}(x_0)}$
(cf. \eqref{eq:sottolivellibassi} and the definition of $\Lambda^*$). Thus, we can 
consider the homotopy $K$ defined  by:
\[
K(\tau,x)(s)=\Psi^{-1}\big(h_0(\tau,\Psi(x(s))\big).
\]
Such a map may fail to produce curves with the desired $H^1$-regularity, in view of the fact that $\Psi$ may fail to be a diffeomorphism. In this case a further broken geodesic procedure is rquired to obtain the desired map $H$, as in the proof of Lemma~\ref{thm:corde}.
\end{proof}

The topological argument that will be needed, which employs the notion of relative category of a pair of topological
spaces, requires the construction of a suitable homotopy on the set $\mathfrak C$, described in  Proposition~\ref{teo:deformation6} below. \smallskip

Let $\Dgot$ denote the family of all closed $\Rcal$--invariant subset of $\mathfrak C$. 
Given $\mathcal D\in\Dgot$ and $\tau \geq 0$, we denote by $\mathcal A_\tau$ the $\mathcal R$-invariant set:
\begin{equation}\label{eq:Ah}
{\mathcal A}_\tau=\big\{x \in \mathcal D: \eta_*(\tau,x)\in \mathcal O_{r_*}\big\}.
\end{equation}
We need the following result, whose complete proof can be found in \cite{OT}.
\begin{prop}\label{teo:deformation6}
Fix $\mathcal D\in\Dgot$, $\tau \geq 0$, and let $\mathcal A_\tau$ be as in \eqref{eq:Ah}. Then, there exists a continuous map \[k_*\colon [0,1] \times \mathcal A_\tau \longrightarrow \mathfrak C \setminus \mathfrak C_0,\] such that:
\begin{itemize}
\item[(a)] $k_*(0,x)=x$, for every $x \in \mathcal A_\tau$;
\item[(b)] $k_*(\tau,\mathcal R x)=\mathcal R k_*(\tau,x)$, for all $\tau \in [0,1]$, for all $x \in \mathcal A_\tau$;
\item[(c)] $k_*(1,\mathcal A_\tau)=\{y_0,Ry_0\}$, for some $y_0 \in \mathfrak C \setminus \mathfrak C_0$.
\end{itemize}
\end{prop}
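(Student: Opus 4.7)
The plan is to build $k_*$ as the concatenation of three successive homotopies, on $[0,\tfrac13]$, $[\tfrac13,\tfrac23]$ and $[\tfrac23,1]$ respectively: a \emph{flow} phase that replaces $x$ by the chord joining the endpoints of $\eta_*(\tau,x)$; a \emph{contraction} phase that shrinks those endpoints onto the endpoints of the nearest OGC; and a \emph{collapse} phase that deforms all chords so obtained to a single pair $\{y_0,\mathcal R y_0\}$. By disjointness of $B(\gamma_i,r_*)$ and $B(\mathcal R\gamma_i,r_*)$, the set $\mathcal A_\tau$ decomposes into clopen pieces
\[
\mathcal A_i^\pm=\bigl\{x\in\mathcal A_\tau:\eta_*(\tau,x)\in B(\gamma_i^\pm,r_*)\bigr\},\qquad \gamma_i^+=\gamma_i,\ \gamma_i^-=\mathcal R\gamma_i;
\]
by $\mathcal R$-equivariance of $\eta_*$, $\mathcal R(\mathcal A_i^+)=\mathcal A_i^-$. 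I will define the homotopy on each $\mathcal A_i^+$ and then extend to $\mathcal A_i^-$ by $\mathcal R$-equivariance.

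\textbf{Flow and contraction.} Since $\mathcal D\subset\mathfrak C$, every $x\in\mathcal A_\tau$ has the form $x=\gamma_{x(0),x(1)}$, with $\gamma$ the chord map of Lemma~\ref{thm:corde}. For $s\in[0,\tfrac13]$ I set
\[
k_*(s,x)=\gamma_{\eta_*(3s\tau,x)(0),\,\eta_*(3s\tau,x)(1)},
\]
which is $\mathcal R$-equivariant and satisfies $k_*(0,x)=x$. At $s=\tfrac13$, for $x\in\mathcal A_i^+$ the endpoints $A_x=\eta_*(\tau,x)(0)$ and $B_x=\eta_*(\tau,x)(1)$ lie in the disjoint contractible neighborhoods $B_{\partial\Omega}(\gamma_i(0),r_*)$ and $B_{\partial\Omega}(\gamma_i(1),r_*)$. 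I fix contractions $c_i^0,c_i^1$ of these neighborhoods onto $\gamma_i(0),\gamma_i(1)$ respectively, and for $s\in[\tfrac13,\tfrac23]$ set
\[
k_*(s,x)=\gamma_{c_i^0(3s-1,A_x),\,c_i^1(3s-1,B_x)}\qquad\text{on }\mathcal A_i^+,
\]
extending by $\mathcal R$-equivariance. Disjointness of the two neighborhoods keeps the endpoints distinct, so $k_*(s,x)\notin\mathfrak C_0$, and at $s=\tfrac23$ the whole piece $\mathcal A_i^+$ is sent to the single curve $\gamma_{\gamma_i(0),\gamma_i(1)}$.

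\textbf{Collapse phase.} Fix $A_0,B_0\in\partial\Omega$ with $A_0\neq B_0$ and, for each $i$, choose continuous paths $\alpha_i,\beta_i\colon[0,1]\to\partial\Omega$ from $\gamma_i(0),\gamma_i(1)$ to $A_0,B_0$ respectively, satisfying $\alpha_i(s)\neq\beta_i(s)$ for all $s$; such paths exist because the ordered configuration space $\{(A,B)\in\partial\Omega\times\partial\Omega:A\neq B\}$ is path-connected, since $\partial\Omega\cong\mathds S^{N-1}$ with $N\ge2$. For $s\in[\tfrac23,1]$ I set
\[
k_*(s,x)=\gamma_{\alpha_i(3s-2),\,\beta_i(3s-2)}\qquad\text{on }\mathcal A_i^+,
\]
and extend to $\mathcal A_i^-$ by $\mathcal R$-equivariance (using the swapped paths). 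Setting $y_0=\gamma_{A_0,B_0}\in\mathfrak C\setminus\mathfrak C_0$, property (c) is verified, and (a), (b) are automatic.

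\textbf{Main obstacle.} The only genuinely delicate point is to ensure that \emph{each} of the three phases simultaneously (i) keeps the two endpoints of $k_*(s,x)$ distinct, so that $k_*$ lands in $\mathfrak C\setminus\mathfrak C_0$, and (ii) commutes with $\mathcal R$. For (i) I rely on Remark~\ref{rem:distictep}, on the disjointness of the endpoint neighborhoods built into the choice of $r_*$, and on the path-connectedness of the ordered configuration space of two points of $\partial\Omega$ (where $N\ge2$ enters essentially). For (ii), equivariance is free once the construction is first carried out on the pieces $\mathcal A_i^+$, which form a fundamental domain for the $\mathcal R$-action, and then transported to $\mathcal A_i^-$ by the prescribed formula. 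Continuity across the gluing instants $s=\tfrac13,\tfrac23$ reduces to pointwise matching of the formulas on each clopen piece.
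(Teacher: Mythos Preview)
Your overall architecture---decompose $\mathcal A_\tau$ into the clopen pieces $\mathcal A_i^\pm$ and contract each $\mathcal A_i^+$ separately, then extend by $\mathcal R$-equivariance---is sound, and your contraction and collapse phases are correct. The gap is in the \emph{flow phase}. On $s\in[0,\tfrac13]$ you set $k_*(s,x)=\gamma_{\eta_*(3s\tau,x)(0),\,\eta_*(3s\tau,x)(1)}$, but you give no reason why the two endpoints of $\eta_*(3s\tau,x)$ stay distinct for intermediate $s$. The flow $\eta_*$ lives on all of $\mathfrak M$, and its construction contains nothing that prevents the endpoints from coinciding at some time in $\left]0,\tau\right[$; if that happens, $k_*(s,x)$ lands in $\mathfrak C_0$ and the codomain requirement fails. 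The three ingredients you list under ``main obstacle'' (Remark~\ref{rem:distictep}, disjointness of the endpoint neighbourhoods, path-connectedness of the ordered configuration space) address, respectively, the terminal position, the contraction phase, and the collapse phase---none of them controls the intermediate endpoint behaviour along $\eta_*$.

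This is precisely the difficulty the paper isolates. Its proof first builds a preliminary homotopy $h_*$ with values only in $\mathfrak C$, \emph{explicitly allowing} it to cross $\mathfrak C_0$; the role of $h_*$ is not to give $k_*$ directly, but merely to separate $\mathcal A_\tau$ into two closed disjoint pieces $F_1=h_*(1,\cdot)^{-1}(y_0)$ and $F_2=\mathcal R F_1$. The actual contraction of $F_1$ inside $\mathfrak C\setminus\mathfrak C_0$ is then established by a separate argument, for which the paper refers to \cite[Proposition~5.11]{OT}. So the step you are treating as routine---that each $\mathcal A_i^+$ (or $F_1$) is contractible \emph{within} $\mathfrak C\setminus\mathfrak C_0$---is the genuine content of the proposition, and following the endpoint trajectory along $\eta_*$ does not supply it.
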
 
\begin{proof}[Sketch of the proof]
The first observation is that a homotopy that satisfies (a), (b) and (c) above, but taking values in $\mathfrak C$ (i.e., possibly having points of $\mathfrak C_0$ in its image), does exist. This follows from the fact that, by definition, the homotopy $\eta_*$ carries $\mathcal A_\tau$ 
into the set $\mathcal O_{r_*}$. For $r_*>0$ small enough, this set is retractible in $\mathfrak M$ onto the finite set 
$\big\{\gamma_1,\ldots,\gamma_k,\mathcal R\gamma_1,\ldots,\mathcal R\gamma_k\big\}$, and again this finite set is retractible
in $\mathfrak M$, say, to the two-point set $\big\{\gamma_1,\mathcal R\gamma_1\big\}$. 

Composing with the \emph{endpoints mapping}, $\mathfrak M\ni\gamma\mapsto\big(\gamma(0),\gamma(1)\big)\in\partial\Omega\times\partial\Omega\cong\mathfrak C$, one obtains a homotopy $h_*\colon[0,1]\times\mathcal A_\tau\to\mathfrak C$ that carries $\mathcal A_\tau$ to the two-point set $\{y_0,\mathcal Ry_0\}$, where $y_0=\big(\gamma_1(0),\gamma_1(1)\big)$. 

Now using $h_*$ we can construct another homotopy carrying $\mathcal A_\tau$ to the two-point set $\{y_0,\mathcal Ry_0\}$ in $\mathfrak C\setminus\mathfrak C_0$. Note that $\mathcal A_\tau\cap\mathfrak C_0=\emptyset$; namely, $\eta_*(\tau,\cdot)$ fixes all points of $\mathfrak C_0$ for all $\tau\in[0,1]$, by \eqref{itm:cost} of Proposition~\ref{prop:etaglobale}, and $\mathcal O_{r_*}\cap\mathfrak C_0=\emptyset$ for $r_*>0$ small enough. 

The second observation is that $\mathcal A_\tau$ can be written as the \emph{disjoint union} $F_1\bigcup F_2$ of two closed sets $F_1,F_2\subset\mathcal D$, with $\mathcal RF_1=F_2$; namely:
\[F_1=h_*(1,\cdot)^{-1}(y_0),\quad\text{and}\quad F_2=h_*(1,\cdot)^{-1}(\mathcal R y_0).\]
To conclude the proof, it suffices to show that $F_1$ is contractible in $\mathfrak C\setminus\mathfrak C_0$ to the singleton $\{y_0\}$ via some homotopy $\widetilde h_*\colon[0,1]\times F_1\to\mathfrak C\setminus\mathfrak C_0$. The desired homotopy $k_*$ will then be obtained by extending $\widetilde h_*$ to $[0,1]\times F_2$ by $\mathcal R$-equivariance and the map $\widetilde h_*$ can be contructed as in the proof of \cite[Proposition 5.11]{OT}.
\end{proof}

\subsection{Proof of Theorem \ref{thm:generale}}\label{sec:main}
 
Our proof of Theorem~\ref{thm:generale} will be now finalized using minimax theory  and a suitable version of Lusternik-Schnirelman relative category,
as defined in \cite[Definition 3.1]{FW}.
For all standard definitions of the relative category
and other relative cohomological indexes see for instance \cite{FH} and references therein.

\begin{defin}\label{def:10.1}
Let $X$ be a topological space and let $Y$ be a closed subset of $X$. A closed subset $F$ of $X$ has \emph{relative category
equal to $k\in \mathds{N}$}, and we write $\cat_{X,Y}(F)=k$, if $k$ is the minimal positive integer such that there exists a family $(A_i)_{i=0}^k$ of open subsets of $X$ such that $F\subset\bigcup\limits_{i=0}^k
A_i$,  $F\cap Y\subset A_0$, and such that for all $i=0,\ldots,k$ there exists continuous maps $h_i\colon[0,1]\times
A_i\to X$ with the following properties:
\begin{enumerate}
\item\label{itm:def10.1-1} $h_i(0,x)=x,\,\forall x\in A_i,\,\forall i=0,\ldots,k$; 
\smallskip

\item\label{itm:def10.1-2}
for every $i=1,\ldots,k$:
\begin{enumerate}
\item\label{itm:def10.1-2a} there exists $x_i\in X\setminus Y$ such that $h_i(1,A_i)=\{x_i\}$;
\item\label{itm:def10.1-2b} $h_i\big([0,1]\times A_i\big)\subset X\setminus Y$;
\end{enumerate}\smallskip

\item\label{itm:def10.1-3} if $i=0$:
\begin{enumerate}
\item\label{itm:def10.1-3a} $h_0(1,A_0)\subset Y$; \item\label{itm:def10.1-3b} $h_0(\tau,A_0\cap Y)\subset
Y,\,\forall\,\tau\in[0,1]$.
\end{enumerate}
\end{enumerate}
\end{defin}

For any $\Rcal$-invariant subset $X\subset \mathfrak M$, we denote by $\widetilde X$ the quotient space with respect to the equivalence relation induced by $\mathcal R$. In particular, we will consider the sets $\widetilde{\mathfrak C}$ and $\widetilde{\mathfrak C}_0$, where $\mathfrak C$ and $\mathfrak C_0$ are defined in \eqref{eq:CC0}.
For our minimax argument we will  use the relative category $\cat_{\widetilde{\mathfrak C},\widetilde{\mathfrak C}_0}(\widetilde{\mathfrak C})$. \smallskip

Using the topological properties of the $(N-1)$--dimensional real projective space, it it is not hard to show that
\begin{equation}\label{eq:10.1}
\cat_{\widetilde{\mathfrak C},\widetilde{\mathfrak C_0}}(\widetilde{\mathfrak C}) = N.
\end{equation}
Details of the proof of \eqref{eq:10.1} can be found in reference \cite{esistenza}.

\begin{proof}[Proof of Theorem \ref{thm:generale}]

Let us denote by  $\Dgot$ the class of closed $\Rcal$--invariant subset of $\mathfrak C$ and le $eta_*$ be the homotopy given in Proposition \ref{prop:etaglobale}. As usual define, for
any $i=1,\ldots,N$,
\begin{equation}\label{eq:10.2}
\Gamma_i=\Big\{\Dcal\in\Dgot\,:\,\cat_{\widetilde{\mathfrak C},\widetilde{\mathfrak C}_0}(\widetilde\Dcal)\ge i\Big\},
\end{equation}
and we set
\begin{equation}\label{eq:10.3}
c_i=\inf_{\overset{\Dcal\in\Gamma_i}{\tau \geq 0}}\sup\big\{ \mathcal F(x):{x \in \eta_*(\tau,\mathcal D) \setminus \Lambda_*}\big\}.
\end{equation}
Each $c_i$ is a  well defined real number. Namely, since  ${\mathfrak C} \in \Gamma_i$ and $\eta_*(0,x)=x$ for any $x$:
\begin{equation}\label{eq:maggiorazione}c_i\le M_0^2.\end{equation}
Clearly, since $\mathcal F \geq 0$, $c_i\geq 0$ for any $i$.  Note also that $c_i \leq c_{i+1}$.

Now, by Proposition~\ref{prop:sublevels}: 
\[ 
c_1 \geq \dfrac{\delta_1^2}{K_0^2}.
\]
Moreover, from Proposition~
\ref{teo:deformation1} it follows that for any $i\in\{1,\ldots,N\}$, there exists an orthogonal geodesic chord $\gamma_i$ such that $\mathcal F(\gamma_i)=c_i$.  Assuming that the number of OGCs is finite, from Propositions \ref{teo:deformation2} and \ref{teo:deformation6} we deduce that 
\[
\phantom{\quad \text{ for all }1\le i\le N-1,}
c_i<c_{i+1},\quad \text{ for all }1\le i\le N-1,
\]
i.e., $c_1,\ldots,c_N$ form a sequence of $N$ distinct positive real numbers. Thus, we have $N$ OGCs on which the functional $\mathcal F$ takes distinct values.
Using the transversality conditions at the endpoints, one sees easily that if $x_1$ and $x_2$ are OGSs with $x_1\big([0,1]\big)=x_2\big([0,1]\big)$, then $\mathcal F(x_1)=\mathcal F(x_2)$. This says that we have found $N$ geometrically distinct OGCs, proving Theorem~\ref{thm:generale}.
\end{proof}


\begin{thebibliography}{99}


\bibitem{bos} \textsc{W.\ Bos}, {\em Kritische Sehenen auf Riemannischen Elementarraumst\"ucken}, Math.\ Ann.\ {\bf 151} (1963), 431--451.

\bibitem{CD} \textsc{A.\ Canino, M.\ Degiovanni} {\em Nonsmooth critical point theory and quasilinear elliptic equations}, Proc. Topological Methods in Differential Equations and Inclusions, A.Granas and M.Frigon ed., Montreal 1994, 1-50.

\bibitem{degmarz} \textsc{M.\ Degiovanni, M.\ Marzocchi}, {\em A Critical Point
Theory for Nonsmooth Functionals}, Ann.\ Mat.\ Pura Appl.\ {\bf167} (1994),
73--100.

\bibitem{docarmo} \textsc{M.\ P.\ do Carmo}, {\em Riemannian Geometry}, Birkh\"auser, Boston, 1992.

\bibitem{FH} \textsc{E. Fadell, S. Husseini}, {\em Relative cohomological index theories}, Adv. Math {\bf 64} 1-31, 1987.

\bibitem{FW} \textsc{G. Fournier, M. Willem}, {\em Multiple solutions of the forced double pendulum equation}, Ann. Inst. H. Poincar\'e, Analyse non Lineaire {\bf 6} (suppl.) 259--281, 1989.


\bibitem{GGP1} \textsc{R. Giamb\`o, F. Giannoni, P. Piccione},
{\em Orthogonal Geodesic Chords, Brake Orbits and Homoclinic Orbits in
Riemannian Manifolds}, Adv. Diff. Eq., {\bf 10}, 2005,
pp 931--960.

\bibitem{esistenza} \bysame,
\emph{Existence of orthogonal geodesic chords on Riemannian manifolds with concave boundary and homeomorphic to the $N$-dimensional disk}, Nonlinear Analysis Series A: Theory, Methods, Applications, 73 (2010) 290--337.

\bibitem{arma}  \bysame,
{\em Multiple Brake Orbits and Homoclinics in Riemannian Manifolds}, Archive for Rational Mechanics and Analysis: 200,  (2011), 691--724.

\bibitem{JDE0}  \bysame, \emph{Examples with minimal number of brake orbits and homoclinics in annular potential regions}, Journal of Differential Equations 256 (2014) pp. 2677-2690 

\bibitem{cag}  \bysame, \emph{Morse Theory for geodesics in singular conformal metrics}, Comm.\ Anal.\ Geom.\ \textbf{22} (2014), 779--809.

\bibitem{JDE}
 \bysame, \emph{%
Functions on the sphere with critical points in pairs and orthogonal geodesic chords}, J. Differential Equations, volume 260, (2016), 8261-8275.

\bibitem{2bo}\bysame \emph{
Multiple  brake orbits in m-dimensional disks}  Calc. Var.  and PDEs, Volume 54,  (2015), Page 2553-2580.

\bibitem{OT}
\bysame, \emph{Multiple ortoghonal geodesics chords in nonconvex Riemannian disks using obstacles}, Calc.\ Var.\ PDEs, (2018) 57:117.  

\bibitem{LiuLong} \textsc{H.\ Liu, Y.\ Long}, {\em Resonance identity for symmetric closed characteristics on symmetric convex Hamiltonian energy hypersurfaces and its applications} J. Differential Equations 255 (2013) 2952-2980.
    
\bibitem{LZ}  \textsc{C.\ Liu, D.\ Zhang}, {\em
Seifert conjecture in the even convex case}, Comm. Pure and Applied Math. 67 (2014) 1563-1604.   


\bibitem{Liu} \textsc{C.Liu}, {\em Index theory in nonlinear analysis}, Springer, Berlin, 2019.

\bibitem{LZZ} \textsc{Y. Long,  D. Zhang, C. Zhu}, {\em Multiple brake orbits in bounded convex symmetric domain},
Adv. Math. 203 (2006), no. 2, 568--635.

\bibitem{Long} \textsc{Y. Long,  C. Zhu}, {\em Closed characteristics on
compact convex hypersurfaces in $\mathds R\sp{2n}$}
Ann.\ of Math.\ (2) {\bf155}  (2002),  no.\ 2, 317--368.

\bibitem{LustSchn} \textsc{L.\ Lusternik, L.\ Schnirelman}, {\em Methodes
Topologiques dans les Problemes Variationelles}, Hermann, 1934.


\bibitem{MS} \textsc{A.\ Marino, D.\ Scolozzi}, {\em Geodetiche con ostacolo } Boll. U.M.I., (6) 2--B (1983), 1--31.


\bibitem{PalaisLS} \textsc{R.\ Palais}, \emph{
Lusternik--Schnirelman theory on Bancah manifolds}, Topology \textbf5 (1966), 115--132.


\bibitem{rab}
\textsc{P.\ H.\ Rabinowitz}, {\em Critical point theory and applications to differential equations: a survey}. Topological nonlinear analysis, 464-513, Progr. Nonlinear Differential Equations Appl., 15, Birkhauser Boston, Boston, MA, 1995.

\bibitem{seifert} \textsc{H.\ Seifert}, {\em Periodische Bewegungen Machanischer
Systeme}, Math.\ Z.\ {\bf51} (1948), 197--216.


\bibitem{wh} \textsc{H. Whitney}, \emph{The self-intersections of a smooth $n$-manifold in $2n$-space}, Ann. of Math. (2) 45, (1944), 220--246.

\bibitem{Z1} \textsc{D. Zhang}, {\em Brake type closed characteristics on reversible compact convex
hypersurfaces in $\mathds{R}^{2n}$} Nonlinear Analysis 74 (2011) 3149-3158.

\bibitem{Z2} \textsc{D. Zhang, C.Liu}, {\em Multiplicity of brake orbits on compact convex symmetric
reversible hypersurfaces in $\mathds{R}^{2n}$ for $n \geq 4$},
Proc. London Math. Soc. (3) 107 (2013) 1-38

\bibitem{Z3}  \bysame, {\em Multiple brake orbits on compact convex symmetric reversible hypersurfaces in $\mathds{R}^{2n}$}, Ann. Inst. H. Poincar\'e Anal. Non Lin\'eaire
31 (2014), no. 3, 531--554.


\end{thebibliography}
\end{document}